\numberwithin{equation}{section} 
\newtheorem{theorem}{Theorem}[section]
\newtheorem{prop}[theorem]{Proposition}
\newtheorem{lemma}[theorem]{Lemma}
\newtheorem{remark}{Remark}[section]\theoremstyle{remark}
\newtheorem{definition}{Definition}[section]\theoremstyle{definition}
\newcommand{\Out}{\mathrm{out}}
\newcommand{\In}{\mathrm{in}}
\newcommand{\para}{{\scriptstyle \parallel}}
\newcommand{\gr}[1]{\boldsymbol{#1}}
\newcommand{\e}{\vec{\mathbf{e}}_1}
\newcommand{\ind}{\mathbbm{1}}
\renewcommand{\k}{{\vec{\kappa}}}
\newcommand{\tj}{{{\textit{\~{\j}}}}}
\newcommand{\ti}{{{\textit{\~{\i}}}}}
\title{Boltzmann-Grad limit of a hard sphere system in a box with diffusive boundary conditions}
\author{Corentin Le Bihan\footnote{UMPA (UMR CNRS 5669), \'Ecole Normale Superieur de Lyon, 46 allée d'Italie, 69364 LYON, FRANCE, e-mail: {\color{blue} {\tt corentin.le-bihan@ens-lyon.fr}}}}
\begin{document}
	\maketitle
	\begin{abstract}
		In this paper we present  a rigorous derivation of the Boltzmann equation in a compact domain with diffuse reflection boundary conditions. We consider a system of $N$ hard spheres of diameter $\epsilon$ in a box $\Lambda := [0,1]\times(\mathbb{R}/\mathbb{Z})^2$. When a particle meets the boundary of the domain, it is instantaneously reinjected into the box with a random direction, and conserving kinetic energy.
		We prove that the first marginal of the process converges in the scaling $N\epsilon^2  = 1$, $\epsilon\rightarrow 0$ to the solution of the Boltzmann equation, with the same short time restriction of Lanford's classical theorem.	
	\end{abstract}
	\section{Introduction}
	A simple model of gas is the hard sphere gas. Each molecule is described as a little sphere of diameter $\epsilon>0$ moving freely along straight lines in a domain $\Lambda\subset \mathbb{R}^d$, $d \geq 2$, and interacting with the other molecules only at distance $\epsilon$. A classical problem is the study of limits of such a system when the number of particles $N$ goes to infinity. Of course this limit depends deeply on the relation between $N$ and $\epsilon$. We shall focus here on an intermediate limit which bridges the microscopic scale proportional to $\epsilon$, and the macroscopic scales where we only see average quantities as the temperature or the mean velocity. 

	At a mesoscopic scale we look at the distribution $f(t,x,v)$ of one particle in the phase space $\Lambda_x\times\mathbb{R}^d_v$ at time $t \geq 0$. In the case of hard spheres, the only interesting limit of this type is the Boltzmann-Grad scaling $N\epsilon^{d-1}= 1$ (see \cite{Grad}), for which the density of the gas goes to $0$ as $\epsilon$, but the mean distance traveled by a particle between two collisions (the mean free path) is constant. In this regime we expect that the coordinates of two randomly chosen particles are "almost independent" and that the limiting one-particle distribution is governed by the Boltzmann equation
	\[\partial_t f+v\cdot\nabla_x f=Q(f,f) ,\]
	\[Q(f,f)(v) := \int_{\mathbb{R}^3\times\mathbb{S}^2}\left(f(v-\nu\cdot(v-v_*)\nu)f(v_*+\nu\cdot(v-v_*)\nu)-f(v)f(v_*)\right) b(v-v_*,\nu)dv_*d\nu\]
	where the operator $Q(f,f)(x,v)$ describes the variation due to collisions and $b(v-v_ *,\nu)$ is a given collision kernel (\cite{Boltzmann}).

	In a fundamental paper \cite{Lanford}, Lanford stated a convergence theorem of the hard spheres system for a short time $t<T^*$, $T^*$ depending on the initial condition. A detailed proof has been provided later on, see in particular \cite{GST,PSS} for quantitative bounds on the convergence error (including smooth potentials with finite range) in the case of a domain without boundary (see also \cite{Spohn,CIR,Delinger,GG}). There also exists a proof of long time convergence when the domain is $\mathbb{R}^2$ or $\mathbb{R}^3$ and the gas is very diluted (meaning in particular that the initial distribution of particles $f_0(x,v)$ is bounded by a Gaussian $\eta e^{-\frac{|x|^2+|v|^2}{2}}$, with $\eta >0$ sufficiently small) (see \cite{IP}).
	
	Adding boundaries is important especially in relation with the problem of nontrivial stationary solutions, which is one of the main domains of application of the Boltzmann equation (see \cite{C, EM}). Even without considering the stationary problem, the presence of boundaries leads to several delicate issues. A first one is the modeling itself; a problem which goes back to the origins of kinetic theory (see \cite{C} for a discussion on several different kinds of reflection law). 
	
	Let $\Lambda$ be an open domain of dimension $d$ with a smooth boundary. A first example of reflection law is the specular reflection: when a particle hits the boundary at point $x$ with velocity $v$, it is reflected with velocity $v' := v-2(n(x)\cdot v)n(x)$ where $n(x)$ is the inner normal vector of $\partial\Lambda$ in $x$. Thus at the boundary the distribution verifies the condition $f(t,x,v)= f(t,x,v')$. This dynamics encodes complications because of possible focusing. Note however Th\'{e}ophile Dolmaire's thesis (see  \cite{Dolmaire,Dolmaire2}) where a Lanford theorem in the half plain has been proved. 
	
	A second and very famous model are the Gaussian boundary conditions: when a particle meets the boundary at point $x$, it is reflected with velocity $v'$ following the probability law
	\[d\mathbb{P}(v') = (v'\cdot n(x))_+ {M_w(x,v')}dv',~~M_w(x,v):= \frac{e^{-\frac{|v|^2}{2T(x)}}}{(2\pi)^{\frac{d-1}{2}}T(x)^{\frac{d+1}{2}}}\] 
	where $T(x)$ represents the temperature at point $x$. Thus the distribution verifies for all $(x,v)$ such that $x\in\partial\Lambda$, $v\cdot n(x)>0$
	\[f(t,x,v) = M_w(x,v)\int  f(t,x,u)(u\cdot n(x))_-du.\]
	The system of hard spheres with these boundary conditions has fluctuating energy, at variance with the Lanford's setting which typically models an isolated system.
	
	The usual argument for the rigorous convergence in the Boltzmann-Grad limit looks hardly adaptable in this case. Note in particular, that we should preliminarily answer the following question (which can be found in \cite{C}): does the Boltzmann equation admit stationary solutions with prescribed temperature at the boundary? The question is not answered in full generality. However if the temperature at the boundary is smooth with small variations, then near to the hydrodynamic regime (the mean free path going to $0$) there exists a unique stationary solution (see \cite{EGKM,EM}). Unfortunately the scheme of Lanford's proof requires a priori estimates involving infinitely many reference distributions with increasing temperature (see for example chapter 5 of \cite{GST} or Section \ref{Well-posedness of the operators} of the present paper). In addition note that we do not even know if the hard sphere dynamics with Gaussian reflection is a well defined process (see \cite{Catapano}; however this looks just a technical problem and the process is indeed well defined for suitable smooth interactions (see \cite{GLP})).
	
	In the following we will therefore investigate a simpler model: the diffuse reflection in angle. In this model, a particle conserves its velocity when it reaches the boundary and is reflected in a random direction. More precisely, an incoming particle with coordinates $(x,v)$ has outcoming velocity $v'$ following the law
	\[d\mathbb{P}(v')=c_d \frac{(v'\cdot n(x))_+}{|v'|^d}\delta_{|v|-|v'|}dv',~~\mathrm{where}~c_d:=\left(\int_{\mathbb{S}^{d-1}}(\omega\cdot\e)_+d\omega\right)^{-1}\]
	with $\delta$ the Dirac mass, $c_d$ a normalization constant and $\e$ a unit vector. With this reflection law the distribution has to respect the following boundary condition: for $x\in\partial\Lambda$, $v\cdot n(x)>0$,
	\[f(t,x,v) = \int_{\mathbb{S}^{d-1}}f(t,x,|v|\omega)c_d(n(x)\cdot\omega)_-d\omega.\]
	Since the energy is conserved and any Gaussian distribution is a stationary measure, we can expect to be able to adapt Lanford's strategy. 
	
	This model can be seen as a model of rough boundary. 	
	In the hydrodynamic limit of the Boltzmann equation, it would lead to an adiabatic model: the temperature verifies Neumann's boudary condition $n(x) \cdot \nabla T_{\partial\Omega}=0$ and the mean velocities verify the Dirichlet boundary condition $u_{\partial\Omega}=0$ (as discussed in \cite{CLB}).
	
	The paper is organized as follows.
		
	We give a proper definition of the process in Section \ref{The Model} and we derive the evolution law of a symmetric distribution of particles in Section \ref{The BBGKY hierarchy and its pseudo-trajectories}: the BBGKY Duhamel series \eqref{BBGKY Duhamel serie}. Section \ref{Boltzmann's hierarchy and a priori estimates} is dedicated to the formal limit, namely the Boltzmann Duhamel series. In Section \ref{Main theorem and strategy of the proof} we state the Lanford's theorem in a domain with stochastic boundary (Theorem \ref{theorem de Landford}). Section \ref{Term by term convergence} is devoted to the main step of the proof:  the "mean" convergence of the hard sphere process to its formal limit (the \emph{punctual process}). 
	
	In the latter section we will have to restrict to the simple domain $\Lambda := [0,1]\times(\mathbb{R}/\mathbb{Z})^2$, to be able to study the geometry of the hard sphere process. One of the main ingredients is that outside a small set of particle configurations and of time variables, hard sphere and punctual process have the same velocities. Then the error between hard spheres and punctual particles comes only from shifts of size $\epsilon$ at each collision. If we look at a general  domain $\Lambda\subset \mathbb{R}^3$ we loose such simple feature. We believe that the theorem remains true, but the proof would be certainly more delicate.	
	
	\section{The model}\label{The Model}
	We will now give a precise definition of the process.
	
	We will use the notation $\gr{a}_n=(a_1,\cdots,a_n)$.
	
	We want to describe the motion of $N$ hard spheres of diameter $\epsilon$ in a smooth domain $\Lambda$. In the following we take $\Lambda=[0,1]\times\mathbb{T}^2$ where $\mathbb{T}:=\mathbb{R}/\mathbb{Z}$. The particles move along straight lines until they meet either the boundary of $\Lambda$ or another particle. When two particles meet, there is an elastic \emph{collision}. When a particle meets the boundary of $\Lambda$ at the point $x$ with incoming velocity $v^\In$, it is \emph{reflected} at the same point $x$ with velocity $v^\Out$ following a probability law $K_x(v^\In|v^\Out)(v^\Out\cdot n(x))_+dv^\Out$, where $n(x)$ is the inner normal vector of the surface $\partial\Lambda$. We say that there is a \emph{reflection}.
	
	It is not obvious that such process is well defined, and we will restrict in the following to the case of diffusion in angle : 
	\begin{multline}
	K_x(v^\In| v^\Out)(v^\Out\cdot n(x))_+dv^\Out = \frac{c_3 \delta(|v^\In|-|v^\Out|)(v^\Out\cdot n(x))_+dv^\Out}{|v^\Out|^3},\\
	\mathrm{with}~c_3 := \left(\int_{\mathbb{S}^2}(\omega\cdot \e)_+d\omega\right)^{-1} = \frac{1}{\pi}
	\end{multline}
	where $\e$ is the vector $(1,0,0)\in \mathbb{R}^3$.

	Note that in the case of diffusion in angle both measures $K_x(v^\In| v^\Out)(v^\Out\cdot n(x))_+dv^\Out$  and  $K_x(v^\In| v^\Out)(v^\In\cdot n(x))_-dv^\In$ are a probability measure. Thus we can in a certain sense inverse the hard sphere process. This will be the main ingredient of the construction of the process below. In the case of Gaussian boundary conditions, $K_x(v^\In |v^\Out)(v^\In \cdot n(x))_- dv^\In = M_w(x,v^\Out)(v^\In \cdot n(x))_-dv^\In$ which is not a probability measure and  the following strategy cannot be applied.
	\subsection{Construction of the stochastic process}\label{Construction of the stochastic process}
	To discuss the well-posedness of the system, we introduce the phase space 
	\begin{equation}
	\mathcal{D}_\epsilon^N :=\{(\gr{x}_N,\gr{v}_N)=(x_1,\cdots,x_N,v_1,\cdots v_N)\in \Lambda^N\times\mathbb{R}^{3N}~|~\forall i\neq j,~|x_i-x_j|>\epsilon\}
	\end{equation}
	and the probability space
	\begin{equation}
	\Omega := \{\bar{\omega}=(\omega^j)_{j\in \mathbb{Z}^*}~\mathrm{with}~\omega^j\in\mathbb{S}^2,~\omega^j\cdot\e>0 \mathrm{~if~} j>0,~\omega^j\cdot\e<0 \mathrm{~if~} j<0\}.
	\end{equation}
	We assign to $\Omega$ the measure $d\mathbb{P}(\bar{\omega}  = (\omega^j))$ which is the probability measure of  sequences of independent random variables such that $\omega^j$ follows the law $c_d(\omega^j\cdot \e)_-d\omega^j$ for $j <0$ and  $c_d(\omega^j\cdot \e)_+d\omega^j$ for $j >0$.
	
	Let $\Gamma$ be the function on $\partial\Lambda$ such that
	\[\Gamma(x) := \left\lbrace\begin{matrix} 1 \mathrm{~for~} x \in \{0\}\times\mathbb{T}^2\\  -1 \mathrm{~for~} x \in \{1\}\times\mathbb{T}^2. \end{matrix}\right.\]
	We define now a dynamics on the extended phase space $\mathcal{D}_\epsilon^N\times\Omega^N$. Let \[(\gr{z}^0_N,\gr{\bar{\omega}}^0_N)= (\gr{x}^0_N,\gr{v}^0_N,\bar{\omega}^0_1,\cdots,\bar{\omega}^0_N)\in \mathcal{D}^N_\epsilon\times\Omega^N\] and  $t>0$ be a time,
	\begin{itemize}
		\item $(\gr{z}^\epsilon_N(0),\bar{\gr{\omega}}^\epsilon_N(0))= (\gr{z}^0_N,\gr{\bar{\omega}}^0_N)$;
		\item until $\gr{z}_N^\epsilon(t)$ reaches the boundary of $\mathcal{D}_\epsilon^N$, $\gr{\bar{\omega}}^\epsilon_N(t)$ is constant and each ${z}^\epsilon_i(t)$ ($i\in\{1,\cdots,N\}$) moves along  straight lines;
		\item if $|x_i^\epsilon(t)-x_j^\epsilon(t)|=\epsilon$, $v_i^\epsilon(t^+)$ and $v_j^\epsilon(t^+)$ are given by an elastic collision between the two particles :
		\begin{equation}\label{ loi de collision}
		\left\lbrace\begin{split}
		v_i^\epsilon(t^+) = v_i^\epsilon(t^-) - \nu_{i,j}\cdot\left( v_i^\epsilon(t^-)-v_j^\epsilon(t^-)\right) \nu_{i,j}\\
		v_j^\epsilon(t^+) = v_j(t^-) + \nu_{i,j}\cdot\left( v_i^\epsilon(t^-)-v_j^\epsilon(t^-)\right) \nu_{i,j}
		\end{split}\right.
		\end{equation}
		with $\nu_{i,j}:=\frac{x_i^\epsilon(t)-x_j^\epsilon(t)}{\left|x_i^\epsilon(t)-x_j^\epsilon(t)\right|}$ and where $t^\pm$ indicate the limit from the future/past.
		\item if $x_i^\epsilon(t)$ meets the boundary of $\Lambda$, we record the in-coming direction of $v_i^\epsilon(t^-)$ and $v_i^\epsilon(t^+)$ takes the out-coming direction :
		\begin{equation}
		\left\lbrace~~~\begin{split}
		&v_i^\epsilon(t^+)=|v_i^\epsilon(t^-)|\Gamma(x_i^\epsilon(t))\omega^{\epsilon,1}_i(t^-)\\
		&\omega_i^{\epsilon,-1} = \Gamma(x^\epsilon_i(t)) \frac{v^\epsilon_i(t^-)}{|v^\epsilon_i(t^-)|}\\
		&\forall j\in\mathbb{Z}^*\setminus\{-1\},~\omega^{\epsilon,j}_i(t^+)=\omega^{\epsilon,j+1}_i(t^-);
		\end{split}\right.
		\end{equation}
	\end{itemize}
	then we iterate the process. For example in the case with only one particle and after $k$ reflections,
	\[\bar{\omega}^\epsilon_1(t) = \small\left(\begin{matrix}
	\cdots,&{\omega^{0,-2}_1},&\omega^{0,-1}_1,&\Gamma(x_1^1)\frac{v_1^1}{|v_1^1|},&\Gamma(x_1^2)\frac{v_1^2}{|v_1^2|},&\cdots,&\Gamma(x_1^k)\frac{v_1^k}{|v_1^k|}&,\omega_1^{0,k+1}&,\omega_1^{0,k+2}&,\cdots\\
	& {\scriptstyle{-k-2}}& \scriptstyle{-k-1}& \scriptstyle{-k}& \scriptstyle{-k+1}&& \scriptstyle{-1}& \scriptstyle{1}& \scriptstyle{2}
	\end{matrix}\right)\]
	where $x_1^i := x^\epsilon_1(t_i)$ and $v_1^i := v_1^\epsilon(t_i^-)$ are the position of the particle and its incoming velocity at the $i$-th reflection. 
	
	In the same way we can define the backward dynamics for $t<0$.
	
	Note that the variables $({\omega}_i^j)_{j<0}$ are used to record the reflections. This will be practical to reconstruct the dynamics backwardly, independently of the number of reflections.
	
	\subsection{Well-posedness of the process}
	We denote $\phi^{\epsilon,t}_N(\gr{z}_N,\gr{\bar{\omega}}_N)=(\gr{z}^\epsilon_N(t),\gr{\bar{\omega}}^\epsilon_N(t))$ the flow described above with initial conditions $(\gr{z}_N,\gr{\bar{\omega}}_N)$. It is not well defined everywhere and we can have \emph{bad initial data} which lead to
	\begin{itemize}
		\item a collision involving more than two particles at some time,
		\item two collisions/reflections at the same time,
		\item infinitely many collisions/reflections during a finite time,
		\item grazing collisions/reflections.
	\end{itemize}
	However such "pathological" trajectories are exceptional: if we denote $\mathcal{B}^N\subset \mathcal{D}_\epsilon^N\times\Omega^N$ the set of bad initial data and $\mathbb{P}^N(\gr{\bar{\omega}}_N):=\mathbb{P}(\bar{\omega}_1)\otimes\cdots\otimes\mathbb{P}(\bar{\omega}_N)$, we have
	\begin{prop}\label{definition de la dynamique}
		$\mathcal{B}^N$ is of zero measure for $d\gr{z}_Nd\mathbb{P}^N(\gr{\bar{\omega}}_N)$, the dynamics on $\mathcal{D}^N_\epsilon\times\Omega^N$ is well defined for almost all initial data and $\phi^{\epsilon,t}_N$ conserves the measure: for all Borel sets $A\subset\mathcal{D}^N_\epsilon\times\Omega^N$, $\phi^{\epsilon,t}_N(A\setminus\mathcal{B}^N)$ is measurable, with the same measure than $A$.
	\end{prop}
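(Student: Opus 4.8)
The plan is to follow the classical Alexander-type argument for hard spheres (as adapted in \cite{GST}), exploiting the fact that on the extended phase space $\mathcal{D}^N_\epsilon\times\Omega^N$ the dynamics has been made \emph{deterministic}, all the randomness having been frozen into the initial sequences $\gr{\bar{\omega}}_N$. I would establish the three assertions in order, with measure preservation of the elementary maps as the engine that turns local statements into global ones.

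First I would check that the flow is a locally finite composition of measure-preserving maps. Between two events the motion is free transport, which preserves $d\gr{z}_N$ and leaves $\gr{\bar{\omega}}_N$ fixed. At a binary collision the velocity map $(v_i,v_j)\mapsto(v_i^{\Out},v_j^{\Out})$ is a linear isometry of $\mathbb{R}^6$ (a reflection across the hyperplane orthogonal to $\nu_{i,j}$), hence has unit Jacobian and preserves $d\gr{v}_N$. At a reflection the natural object is the boundary flux measure: disintegrating $d\gr{z}_N$ near $\partial\Lambda$ along the transport direction produces the factor $|v_i\cdot n|\,dv_i\,d\sigma_{\partial\Lambda}$, and the reflection map
\[(v_i^{\In},\omega_i^1)\longmapsto\bigl(v_i^{\Out},\ \Gamma(x_i)\,\tfrac{v_i^{\In}}{|v_i^{\In}|}\bigr),\qquad v_i^{\Out}=|v_i^{\In}|\,\Gamma(x_i)\,\omega_i^1,\]
together with the index shift on $\gr{\bar{\omega}}_i$, sends the incoming flux measure $|v_i^{\In}\cdot n|\,dv_i^{\In}\,c_3(\omega_i^1\cdot\e)_+\,d\omega_i^1$ onto the outgoing one $|v_i^{\Out}\cdot n|\,dv_i^{\Out}\,c_3(\omega_i^{-1}\cdot\e)_-\,d\omega_i^{-1}$; in spherical coordinates $v=r\sigma$ both equal $c_3\,r^3\,|\sigma\cdot\e|\,(\omega^1\cdot\e)\,dr\,d\sigma\,d\omega^1$. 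This is exactly the earlier observation that both conditional kernels are probability measures, and it makes the reflection a measure-preserving bijection whose inverse has the same form, the recorded negative-index variables allowing the backward reconstruction.

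The heart of the matter, and the step I expect to be the main obstacle, is ruling out an accumulation of events in finite time. Here the slab geometry of $\Lambda=[0,1]\times\mathbb{T}^2$ helps decisively. Since collisions and reflections conserve the total kinetic energy, every speed stays bounded by $V:=\bigl(\sum_j|v_j^0|^2\bigr)^{1/2}$ along the trajectory. A particle leaving a wall has its first velocity coordinate pointing strictly inward, so it cannot meet a wall again before crossing the slab, which takes time at least $1/V$; hence on any interval $I$ carrying \emph{no} collision each particle reflects at most $V|I|+1$ times, and the total number of reflections is at most $N(V|I|+1)$. Therefore an accumulation of events forces an accumulation of genuine particle--particle collision times, and that the latter happens only on a null set is the boundaryless hard-sphere result of Alexander, which I would invoke as in \cite{GST}: a localization in phase space together with the bounded-velocity constraint yields, outside a null set, a uniform lower bound on the gap between consecutive collisions. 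The torus directions add nothing, and the wall dynamics is tame by the estimate just given.

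The remaining pathologies are each characterized by one extra scalar equation on the relevant event manifold (a triple encounter $|x_i-x_j|=|x_i-x_k|=\epsilon$ or two simultaneous binary events; a grazing collision $\nu_{i,j}\cdot(v_i-v_j)=0$; a grazing reflection $v_i\cdot n(x_i)=0$), so they form subsets of codimension at least one, negligible for the flux measure carried by that manifold. The first-event map---free transport up to the first hitting time---pushes $d\gr{z}_N\,d\mathbb{P}^N(\gr{\bar{\omega}}_N)$ forward, by the coarea formula, onto a measure absolutely continuous with respect to this flux measure, so the set of data singular at their first event is null; composing with the measure-preserving collision and reflection maps of the first step, the set singular at the $(k{+}1)$-th event is the image of a null set under a measure-preserving map, hence null. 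Summing this countable family together with the accumulation set of the previous step shows $\mathcal{B}^N$ is Borel and of measure zero, which yields the first two assertions. Finally, off $\mathcal{B}^N$ the flow $\phi^{\epsilon,t}_N$ is a piecewise-smooth, hence Borel, bijection (the backward dynamics being well posed precisely because the negative-index variables record past reflections) built from measure-preserving pieces, so $\phi^{\epsilon,t}_N(A\setminus\mathcal{B}^N)$ is measurable with the same measure as $A\setminus\mathcal{B}^N$, which equals the measure of $A$ since $\mathcal{B}^N$ is null.
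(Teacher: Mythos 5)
Your proposal is correct and takes essentially the same approach as the paper: measure preservation of the elementary maps (free flow, elastic collision, reflection-with-index-shift) on the extended space $\mathcal{D}^N_\epsilon\times\Omega^N$, the slab-crossing time bounding the number of reflections between collisions, and Alexander's localize-and-iterate scheme to exclude accumulating shocks. The one point to sharpen is that the boundaryless Alexander theorem cannot be invoked as a black box, since wall reflections interleave with collisions and rerandomize velocities; the short-time two-shock estimate must be re-proved for the reflection-interleaved dynamics --- which is exactly the paper's Lemma \ref{Lemme d'estimation des mauvais ensembles}, and your first step together with the slab bound supplies precisely the ingredients it uses.
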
\label{prop d'existence de la dynamique}
	This is an adaptation of the proof of Alexander \cite{Alexander} and it stems from the following lemma. Let $B^N_R$ be the euclidean ball of radius $R$ in $\mathbb{R}^{3N}$.
	\begin{lemma}\label{Lemme d'estimation des mauvais ensembles}
		Let $R>0$ be given, and let $\delta$ be a real number in $(0,\epsilon/2)$. Let 
		\begin{multline*}
		I:= \bigg\lbrace (\gr{z}_N,\bar{\gr{\omega}}_N)\in \Lambda^N\times B^N_R \times \Omega ^N \big\vert\\
		\mathrm{there~are~two~shocks~(reflections~or~collisions)~during~the~time~interval~}[0,\delta]\bigg\rbrace.
		\end{multline*}
		Then for  $\epsilon$ small enough, $|I|\leq C(N,\epsilon,R)\delta^2.$
	\end{lemma}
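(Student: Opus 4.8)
The plan is to decompose $I$ according to the nature of the first two shocks and to show that each shock independently costs a factor $\delta$. For almost every datum in $I$ the dynamics of Section~\ref{Construction of the stochastic process} consists of free flight up to a first shock time $t_1\in[0,\delta]$, then free flight up to a second shock time $t_2\in(t_1,\delta]$, each shock being either a collision of a pair $(i,j)$ governed by \eqref{ loi de collision} or a reflection of a single particle $i$ on $\{0,1\}\times\mathbb{T}^2$. Writing $I$ (up to a null set) as the disjoint union over the type $s_1$ of the first shock and the type $s_2$ of the second, there are at most $(\binom{N}{2}+N)^2=O(N^4)$ pieces, a number absorbed into $C(N,\epsilon,R)$. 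Since on each piece the trajectory is free between consecutive shocks, and since dropping the requirement that $s_1,s_2$ be the \emph{first} two shocks only enlarges the set, it suffices to bound, for each ordered pair of types, the set of data whose free evolution realizes $s_1$ at some $t_1\in[0,\delta]$ and then $s_2$ at some $t_2\in(t_1,\delta]$, by $C(\epsilon,R)\,\delta^2$. Throughout we use that collisions and reflections conserve kinetic energy, so that after any number of shocks every velocity still satisfies $|v_i|\le\sqrt2\,R$.

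The elementary building block is that one shock produced by a free trajectory costs a factor $\delta$. If it is a collision of $(i,j)$, we freeze $x_j$ and the velocities and use the collisional change of variables $x_i\mapsto(t_1,\nu_{i,j})$ given by $x_i=x_j+\epsilon\,\nu_{i,j}-(v_i-v_j)t_1$ on the approaching hemisphere $\nu_{i,j}\cdot(v_i-v_j)<0$, whose Jacobian is $\epsilon^2\,|(v_i-v_j)\cdot\nu_{i,j}|$; integrating over $t_1\in[0,\delta]$ and $\nu_{i,j}\in\mathbb{S}^2$ gives a factor $\le \epsilon^2\,\delta\int_{\mathbb{S}^2}|(v_i-v_j)\cdot\nu_{i,j}|\,d\nu_{i,j}\le C(\epsilon,R)\,\delta$. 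If instead it is a reflection of $i$, only the first coordinate is constrained: the map $x_{i,1}\mapsto t_1$ defined by $x_{i,1}+v_{i,1}t_1\in\{0,1\}$ has Jacobian $|v_{i,1}|\le\sqrt2\,R$, so the admissible $x_{i,1}$ form a set of length $\le\sqrt2\,R\,\delta$, while the tangential components $x_{i,2},x_{i,3}\in\mathbb{T}^2$ and the outgoing angle (drawn against the probability measure $\mathbb{P}$ on $\Omega$) are free and contribute at most $1$.

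It remains to see that the two shocks furnish two independent factors $\delta$. When $s_1$ and $s_2$ involve disjoint sets of particles, the two constraints bear on disjoint position variables, so applying the building block to each and using Fubini yields $C(\epsilon,R)\,\delta^2$. When $s_1$ and $s_2$ share a particle $i$, one resolves the first shock by its change of variables (say $x_i\mapsto(t_1,\nu_{i,j})$ for a collision, or $x_{i,1}\mapsto t_1$ for a reflection), after which the position of $i$ at $t_1$ and its post-shock velocity $v_i'$, with $|v_i'|\le\sqrt2\,R$, are determined by the freed time/angle and by the partner data. The second constraint then falls on a still-free variable: for a subsequent reflection of $i$ it pins the first coordinate of the partner, through $x_{i,1}(t_1)=x_{j,1}+v_{j,1}t_1+\epsilon\,(\nu_{i,j}\cdot\e)$, to a set of length $\le\sqrt2\,R\,\delta$; for a subsequent collision $(i,k)$ it constrains $x_k$ via the collisional change of variables. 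In either case this is an independent factor $\delta$. The only configuration escaping the scheme, two reflections of a single particle, cannot occur here: after a reflection the particle moves into the interior with speed $\le\sqrt2\,R$, so a second wall hit requires crossing the slab of width $1$, which takes time $\ge1/(\sqrt2\,R)>\delta$ once $\epsilon$ (hence $\delta<\epsilon/2$) is small enough.

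The point requiring the most care is precisely this shared-particle bookkeeping: one must check that the second constraint genuinely bears on a variable left free by the first change of variables, and that the Jacobians remain controlled across the discontinuity of the flow at $t_1$, which is guaranteed by energy conservation keeping all speeds below $\sqrt2\,R$. Grazing shocks, for which the factor $|(v_i-v_j)\cdot\nu_{i,j}|$ degenerates, cause no trouble for an upper bound since this factor sits in the numerator and integrates to a constant; the grazing, triple and simultaneous configurations form a null set anyway. Summing the $O(N^4)$ contributions, each bounded by $C(\epsilon,R)\,\delta^2$, gives $|I|\le C(N,\epsilon,R)\,\delta^2$, as claimed.
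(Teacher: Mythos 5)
Your proof is correct and rests on the same two pillars as the paper's own argument: (i) each of the first two shocks confines the initial position of a distinct particle to a set of measure $O(\delta)$, and the two constraints are independent, giving $O(\delta^{2})$ per ordered pair of shock types and $O(N^{4})$ types in all; (ii) the one configuration this scheme cannot handle, two reflections of a single particle with free flight in between, is impossible because the particle must cross the slab of width $1$ at speed at most $R$, which takes time longer than $\delta$. The implementations differ slightly: the paper bounds the "two distinct particles" case by the static set $I_1$ of data in which two particles each start within $2\delta R$ of a shock configuration, while you resolve the shocks dynamically through Alexander-type changes of variables; these are equivalent, and your version is in fact more careful on the shared-particle case (collision of $(i,j)$ followed by a reflection of $i$ or a collision $(i,k)$), which the paper's $I_1$, as written with its requirement $k,l\notin\{i,j\}$, formally excludes. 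One inaccuracy to fix: the double reflection of a single particle is \emph{not} the only configuration escaping your scheme — a second collision of the same pair $(i,j)$ also escapes it, since after the first change of variables no free position variable remains to absorb that constraint, and you do not mention it. That case is vacuous, however: after an elastic collision the relative velocity satisfies $\nu_{i,j}\cdot(v_i'-v_j')>0$, so along free flow $|x_i(t)-x_j(t)|^2=\epsilon^2+2\epsilon(t-t_1)\,\nu_{i,j}\cdot(v_i'-v_j')+(t-t_1)^2|v_i'-v_j'|^2>\epsilon^2$ for $t>t_1$, and the same pair cannot recollide without an intervening shock; adding this one line closes the case analysis.
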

	\begin{proof}
		We treat the different cases separately.
		
		First consider the case with two shocks (reflections or collisions) in the same interval $[0,\delta]$, implying at least two different particles. 
		\[
		I_1:= \left\lbrace 
		(\gr{z}_N,\bar{\gr{\omega}}_N)\in \Lambda^N\times B^N_R \times \Omega ^N \left\vert
		\exists i\neq j,\mathrm{and~} k,l\notin \{i,j\},~ \left\lbrace\begin{split}d(x_i,B^1_\epsilon(x_k)\cup \partial\Lambda) \leq 2\delta R\\
		~d(x_j,B^1_\epsilon(x_l)\cup \partial\Lambda) \leq 2\delta R
		\end{split}\right.\right.\right\rbrace
		\]
		where $d(x,A)$ is the euclidean distance of a point $x$ to a subset $A$. $I_1$ is of measure at most $C(N,\epsilon,R)\delta^2$.
		
		Then we pass to the case where there is one particle which reflects twice the boundary on the interval $[0,\delta]$. This case is made impossible for $\delta<1/R$ because between two reflections a particle has to cross $(0,1)\times\mathbb{T}^2$.
	\end{proof}
	
	\begin{proof}[Proof of Proposition \ref{prop d'existence de la dynamique}]
		Fix $R>0$ and $t>0$. Let $\delta<\epsilon/2$ be a small parameter such that $t/\delta$ is an integer. Lemma \ref{Lemme d'estimation des mauvais ensembles} shows that there exists a subset $I_0(\delta,R)$ of $\Lambda^N\times B^N_R\times \Omega^N$ such that outside of $I_0$ there is at most one shock in the time interval $[0,\delta]$. Its measure is at most $C(N,\epsilon,R)\delta^2$. Observe that the set leading to grazing shocks is of zero measure.
		
		Note that the flow is conservative where it is well defined. Indeed the boundary conditions for collisions are conservative. For the reflection, the map \[(x,v,\omega^1)\mapsto\Big( x + \tau(x,v)v +(t-\tau(x,v))|v|\omega^1,|v|\omega^1,\omega^{-1}(t):=\Gamma(x + \tau(x,v)v)v/|v|\Big)\]
		where $\tau(x,v)$ is the time of travel of one particle to the boundary, is conservative. For the other $\omega^i, i\notin\{0,-1\}$, we just apply the shift $(\omega^i)\mapsto (\omega^{i+1})$, which is also conservative.
		
		Hence there is no pathological trajectory in $\Lambda^N\times B^N_R\times \Omega^N\setminus I_0(\delta,R)$. We recall that the set $\Lambda^N\times B^N_R\times \Omega^N$ is stable under the flow (the energy $\frac{1}{2}\sum_i |v_i|^2$ is conserved). We iterate the procedure and construct a set $I_1(\delta,R)$ such that outside $I_1$ there is at most one shock in the interval $[\delta,2\delta]$. Because the flow is conservative, $I_1(\delta,R)$ is of size at most $C(N,\epsilon,R)\delta^2$. More generally we can construct a sequence of sets $(I_k(\delta,R))_k$ such that outside $\bigcup_{0\leq k\leq K} I_k(\delta,R)$ there is no pathological trajectory during the interval $[0,(K+1)\delta]$.
		
		We define $I(\delta,t,R)$ as:
		\[I(\delta,t,R) := \bigcup_{0\leq k\leq t/\delta} I_k(\delta,R).\]
		$I(\delta,t,R)$ is of size at most $C(N,\epsilon,R)\delta^2\cdot t/\delta = C(N,\epsilon,R)t\delta$. Setting 
		\[I(t,R) := \bigcap_{n\in \mathbb{N}^*} I(t/n,t,R),\]
		$I(t,R)$ is of null-measure and outside it there is no pathological trajectory on $[0,t]$. We take the union $\mathcal{B}^N$ of the $I(t,R)$ for a sequence of $t$ and $R$ going to infinity. Outside it there is no pathological trajectory. This concludes the proof. 
	\end{proof}
	\section{The BBGKY hierarchy and its pseudo-trajectories}\label{The BBGKY hierarchy and its pseudo-trajectories}
	\subsection{Stochastic semigroup and expression of the hierarchy}
	We will use the following notation: for $1\leq k<l\leq n$ two integers, $\gr{a}_{k,l}=(a_k,\cdots,a_l)$.
	
	We want to study a system of $N$ identical particles, distributed at time zero according to a probability $\mu_N^0$ on $\mathcal{D}_\epsilon^N$. Because all the particles are indiscernible, the measure $\mu_N$ is assumed stable under permutation of particle labels.
	
	We define by duality the semi-group $T_N^\epsilon(t)$ on $\mathcal{M}_0(\mathcal{D}_\epsilon^N)$, the space of finite signed measures $\mu_N$ such that the set of bad trajectories $\mathcal{B}^N\subset\mathcal{D}^N_\epsilon\times\Omega^N$ is of measure zero for $\mu_N\otimes \mathbb{P}^N$. For any bounded continuous function $\varphi : \mathcal{D}_\epsilon^N\rightarrow\mathbb{R}$, 
	\begin{equation}
	\int_{\mathcal{D}_\epsilon^N} \varphi(\gr{z}_N) d(T_N^\epsilon(t)\mu_N)(\gr{z}_N) := \int_{\mathcal{D}_\epsilon^N} \left(\int_{\Omega^N}  \varphi\left[\phi^{\epsilon,t}_N(\gr{z}_N^0,\gr{\bar{\omega}}^0_N)\right]d\mathbb{P}^N(\gr{\bar{\omega}}^0_N)\right) d\mu_N(\gr{z}_N^0).
	\end{equation}
	In the case of density measures, we have an explicit formula :
	\begin{prop}
		For $W_{0,N}\in L^1\cap L^\infty(\mathcal{D}_\epsilon^N)$,
		\begin{equation}\label{caractéristique proba}
		T_N^\epsilon(t)(W_{0,N}(\gr{z}_N)d\gr{z}_N) = \left( \int_{\Omega^N}W_{0,N}\left[\phi^{\epsilon,-t}_N(\gr{z}_N^0,\gr{\bar{\omega}}^0_N)\right]d\mathbb{P}^N(\gr{\bar{\omega}}^0_N)\right)d\gr{z}_N^0,\mathrm{~with~}t>0.
		\end{equation}
	\end{prop}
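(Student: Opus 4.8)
The plan is to prove the identity by testing both sides against an arbitrary bounded continuous $\varphi:\mathcal{D}_\epsilon^N\to\mathbb{R}$ and reducing everything to the measure-preservation of the flow recorded in Proposition \ref{prop d'existence de la dynamique}. Since a finite signed measure on $\mathcal{D}_\epsilon^N$ is determined by its integrals against bounded continuous functions, it suffices to check that, for $\mu_N=W_{0,N}(\gr{z}_N)d\gr{z}_N$ (which lies in $\mathcal{M}_0(\mathcal{D}_\epsilon^N)$ because $\mathcal{B}^N$ is $d\gr{z}_N\,d\mathbb{P}^N$-null and $\mu_N$ is absolutely continuous), the duality definition of $T_N^\epsilon(t)$ produces the integral of $\varphi$ against the density claimed on the right-hand side of \eqref{caractéristique proba}.

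Starting from the definition of the semigroup I would write
\[\int_{\mathcal{D}_\epsilon^N}\varphi\, d(T_N^\epsilon(t)\mu_N) = \int_{\mathcal{D}_\epsilon^N\times\Omega^N}\varphi\big[\phi^{\epsilon,t}_N(\gr{z}_N^0,\gr{\bar{\omega}}^0_N)\big]\,W_{0,N}(\gr{z}_N^0)\,d\gr{z}_N^0\,d\mathbb{P}^N(\gr{\bar{\omega}}^0_N),\]
where $\varphi$ and $W_{0,N}$, defined on $\mathcal{D}_\epsilon^N$, are evaluated on the $(\gr{z}_N)$-component of the flow. The key observation is that the integrand equals $F\circ\phi^{\epsilon,t}_N$ for the function $F(\gr{z}_N,\gr{\bar{\omega}}_N):=\varphi(\gr{z}_N)\,W_{0,N}\big[\phi^{\epsilon,-t}_N(\gr{z}_N,\gr{\bar{\omega}}_N)\big]$: composing $F$ with $\phi^{\epsilon,t}_N$ and using the group relation $\phi^{\epsilon,-t}_N\circ\phi^{\epsilon,t}_N=\mathrm{id}$, valid off $\mathcal{B}^N$ by the construction in Section \ref{Construction of the stochastic process}, turns the second factor back into $W_{0,N}(\gr{z}_N^0)$.

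Then I invoke Proposition \ref{prop d'existence de la dynamique}: the invertible flow $\phi^{\epsilon,t}_N$ preserves $d\gr{z}_N\,d\mathbb{P}^N(\gr{\bar{\omega}}_N)$, so $\int F\circ\phi^{\epsilon,t}_N\,d\gr{z}_N\,d\mathbb{P}^N=\int F\,d\gr{z}_N\,d\mathbb{P}^N$ (passing from the set statement to integrals of nonnegative measurable functions by the usual indicator–simple–monotone-convergence argument). Substituting the definition of $F$ gives $\int_{\mathcal{D}_\epsilon^N\times\Omega^N}\varphi(\gr{z}_N)\,W_{0,N}\big[\phi^{\epsilon,-t}_N(\gr{z}_N,\gr{\bar{\omega}}_N)\big]\,d\gr{z}_N\,d\mathbb{P}^N(\gr{\bar{\omega}}_N)$, and an application of Fubini's theorem to integrate out the $\gr{\bar{\omega}}_N$ variables yields precisely $\int_{\mathcal{D}_\epsilon^N}\varphi(\gr{z}_N^0)\big(\int_{\Omega^N}W_{0,N}[\phi^{\epsilon,-t}_N(\gr{z}_N^0,\gr{\bar{\omega}}^0_N)]\,d\mathbb{P}^N(\gr{\bar{\omega}}^0_N)\big)d\gr{z}_N^0$. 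As $\varphi$ is arbitrary, the density is identified. Integrability causes no trouble: the candidate density is an average of values of $W_{0,N}$, hence bounded by $\|W_{0,N}\|_{L^\infty}$, and testing with $\varphi\equiv 1$ (approximated by bounded continuous functions) shows it shares the $L^1$ norm of $W_{0,N}$, so it belongs to $L^1\cap L^\infty(\mathcal{D}_\epsilon^N)$.

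The Fubini step and the measurability of all integrands are routine, following from $W_{0,N}\in L^1\cap L^\infty$, $\varphi$ bounded continuous, and measurability of the flow. The only genuinely delicate point is the almost-everywhere invertibility exploited in the rewriting $F\circ\phi^{\epsilon,t}_N$: I must discard the null set $\mathcal{B}^N$ of bad data consistently for both the forward and the backward dynamics and ensure $\phi^{\epsilon,-t}_N\circ\phi^{\epsilon,t}_N=\mathrm{id}$ off that set. Since $\mathcal{B}^N$ is $d\gr{z}_N\,d\mathbb{P}^N$-negligible and the flow is measure-preserving, its forward and backward images stay negligible, so the composition identity holds $d\gr{z}_N\,d\mathbb{P}^N$-almost everywhere, which is all the change of variables requires. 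This bookkeeping is where I expect to spend the most care, although it is a direct consequence of Proposition \ref{prop d'existence de la dynamique} and of the explicit construction of the reversible flow.
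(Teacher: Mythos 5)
Your proof is correct and is essentially the paper's own argument in expanded form: the paper's proof consists of the single remark that the identity is "a direct application of the conservation of measure of $\phi^{\epsilon,t}_N$" (Proposition \ref{definition de la dynamique}), and your write-up — duality against bounded continuous test functions, rewriting the integrand via the a.e.\ inverse $\phi^{\epsilon,-t}_N\circ\phi^{\epsilon,t}_N=\mathrm{id}$ off the null set $\mathcal{B}^N$, invoking measure preservation, and integrating out $\gr{\bar{\omega}}_N$ by Fubini — is exactly the bookkeeping that remark leaves implicit. No gaps; the reversibility point you flag as delicate is indeed the reason the paper builds the recording variables $(\omega^j)_{j<0}$ into the phase space.
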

	\begin{proof}
		It is a direct application of the conservation of measure of $\phi^{\epsilon,t}_N$, Proposition \ref{definition de la dynamique}.
	\end{proof}
	
	We denote $W_{0,N}$ the initial density distribution of particles at time $0$ and $W_N(t)$ the evolution of this distribution. To observe some limit behavior, we have to fix the number of particles $s$ that we study. Let $(f_N^{(s)})_{1\leq s\leq N}$ be the marginals of $W_{0,N}$ and $(f_{0,N}^{(s)}(t))_{1\leq s\leq N}$ be the marginals of $W_N(t)$:
	\begin{equation}
	\forall \gr{z}_s\in\mathcal{D}_\epsilon^s,~f_{0,N}^{(s)}(\gr{z}_s) := \int W_{0,N}(\gr{z}_s,\gr{z}_{s+1,N})d\gr{z}_{s+1,N}
	\end{equation}
	and the same thing for $f_N^{(s)}(t)$. By convention we extend functions on $\mathcal{D}_\epsilon^s$ by $0$ outside $ \mathcal{D}_\epsilon^s$.
	
	The following theorem describes the evolution of the marginals:
	\begin{theorem}\label{théorème sur validité de BBGKY}
		Let $W_{0,N}$ be a function in $L^\infty\cap L^1(\mathcal{D}^N_\epsilon)$. Its marginals $(f^{(s)}_{0,N})_s$ verify the series expansion of the BBGKY hierarchy (we will call it in the following the BBGKY hierarchy):
		\begin{multline}\label{BBGKY Duhamel serie}
		f_N^{(s)}(t) = \sum_{r =0}^{N-s}\alpha(N-s,r)\epsilon^{2r} \int_0^t\int_0^{t_1} \cdots\int_0^{t_{r-1}}dt_1\cdots
		dt_r T_s^{\epsilon}(t-t_1)C_{s,s+1}^\epsilon\cdots\\
		\cdots C_{s+r-1,s+r}^\epsilon T^\epsilon_{s+r}(t_r)f^{(s+r)}_{0,N}=:\sum_{r=0}^{N-s} \alpha(N-r,N) \epsilon^{2r}Q_{s,s+r}^\epsilon(t)f^{(s+r)}_{0,N}
		\end{multline}
		where $\alpha(r,s) = r(r-1)\cdots(r-s+1)$ and $C^\epsilon_{s,s+1}$ is the collision operator:
		\begin{equation}
		C_{s,s+1}^\epsilon := C_{s,s+1}^{\epsilon, +} -C_{s,s+1}^{\epsilon,-}
		\end{equation}
		\begin{equation}
		C_{s,s+1}^{\epsilon, +} f^{(s+1)}(\gr{z}_s) := \sum_{i=1}^s \int_{\mathbb{S}^{2}\times\mathbb{R}^3} f^{(s+1)}(\cdots,x_i,v'_i,\cdots,x_i+\epsilon \nu,v_{s+1}')(\nu\cdot(v_{s+1}-v_i))_+d\nu dv_{s+1}
		\end{equation}
		\begin{equation}
		C_{s,s+1}^{\epsilon, -} f^{(s+1)}(\gr{z}_s) := \sum_{i=1}^s \int_{\mathbb{S}^{2}\times\mathbb{R}^3} f^{(s+1)}(\gr{z}_s,x_i+\epsilon \nu,v_{s+1})(\nu\cdot(v_{s+1}-v_i))_-d\nu dv_{s+1}
		\end{equation}
		and $(v_i,v_{s+1})$ is the scattering of $(v_i',v_{s+1}')$ (see Equation \eqref{ loi de collision}).
	\end{theorem}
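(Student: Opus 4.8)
The plan is to follow the classical derivation of the BBGKY hierarchy: first establish the differential relation coupling two consecutive marginals, then iterate the associated Duhamel formula. The only genuinely new feature to control is the diffuse boundary, which I will show is absorbed into the transport semigroups $T_s^\epsilon$ and generates no extra coupling term. I would start from the Lagrangian expression \eqref{caractéristique proba} for $W_N(t)=T_N^\epsilon(t)W_{0,N}$ and, using that $\phi^{\epsilon,t}_N$ preserves $d\gr{z}_N\,d\mathbb{P}^N$ (Proposition \ref{definition de la dynamique}), test $f_N^{(s)}(t)$ against $\varphi\in C^1_c(\mathcal{D}_\epsilon^s)$ and differentiate in $t$. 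Outside the measure-zero pathological set $\mathcal{B}^N$ the flow is piecewise free transport punctuated by binary collisions and single-particle reflections, so differentiation produces the free-transport term $\sum_i v_i\cdot\nabla_{x_i}\varphi$ together with jump contributions localized on $\partial\mathcal{D}_\epsilon^N$.

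The heart of the computation is a Green (Stokes) formula on $\mathcal{D}_\epsilon^N$ for the transport field, whose boundary splits into the physical pieces $\{x_i\in\partial\Lambda\}$ and the collision manifolds $\Sigma_{ij}=\{|x_i-x_j|=\epsilon\}$. The contributions of $\Sigma_{ij}$ with $i,j\le s$ and of the physical boundaries of the first $s$ particles reassemble exactly into the generator of $T_s^\epsilon$, the reflection terms entering through the boundary condition satisfied by $W_N$. For the physical boundary of a particle $j>s$ the incoming and outgoing fluxes cancel after integration against the reflection kernel, precisely because $K_x(v^{\In}|v^{\Out})(v^{\Out}\cdot n(x))_+\,dv^{\Out}$ is a probability measure and the $\omega$-variables are integrated against probability measures on $\Omega$; hence the reflections of the extra particles do not contribute to the hierarchy. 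The only surviving new terms come from $\Sigma_{ij}$ with $i\le s<j$: evaluating the flux $\nu\cdot(v_j-v_i)$ on $\{|x_i-x_j|=\epsilon\}$ and splitting it into positive and negative parts yields the gain and loss operators $C_{s,s+1}^{\epsilon,+}$ and $C_{s,s+1}^{\epsilon,-}$, the surface element producing the prefactor $\epsilon^2$. By the exchangeability of $W_{0,N}$, hence of $W_N(t)$, all $N-s$ choices of $j$ give equal contributions after integrating out $\gr{z}_{s+2,N}$, collapsing to $(N-s)\epsilon^2 C_{s,s+1}^\epsilon f_N^{(s+1)}(t)$.

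This gives the first-order Duhamel identity
\[
f_N^{(s)}(t)=T_s^\epsilon(t)f_{0,N}^{(s)}+(N-s)\epsilon^2\int_0^t T_s^\epsilon(t-t_1)\,C_{s,s+1}^\epsilon f_N^{(s+1)}(t_1)\,dt_1.
\]
Iterating it, substituting the same identity for $f_N^{(s+1)}$, then $f_N^{(s+2)}$, and so on, the procedure terminates after $N-s$ steps since the $N$-particle system admits no $(N{+}1)$-th marginal. Collecting the descending products of the integer prefactors yields exactly $\alpha(N-s,r)=(N-s)(N-s-1)\cdots(N-s-r+1)$ together with the nested time integrals, which is precisely \eqref{BBGKY Duhamel serie}.

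I expect the main obstacle to be the rigorous justification of the Green's formula and the boundary-term bookkeeping, given that $W_N(t)$ is only $L^1\cap L^\infty$ and transported by a flow with discontinuities. This is handled by restricting to the complement of $\mathcal{B}^N$ (Proposition \ref{definition de la dynamique}), approximating by smooth configurations or arguing directly along characteristics, and carefully verifying that the stochastic reflection produces no net flux — the single point where the diffuse boundary condition genuinely differs from the boundaryless case, and where the probability-measure property of the reflection kernel is essential.
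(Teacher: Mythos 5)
Your route (differentiate the tested marginals in time, apply a Green/Stokes formula on $\mathcal{D}_\epsilon^N$ to extract the collision and boundary fluxes, obtain the one-step Duhamel identity, then iterate) is the classical derivation of the BBGKY hierarchy, and it is genuinely different from what the paper does. The paper never differentiates in time and never invokes a Green formula: following \cite{SP}, it first rewrites the Duhamel series through backward pseudotrajectories, then passes to a weak/dual formulation on the extended space $\mathcal{D}_\epsilon^N\times\Omega^N$, where the stochastic reflection becomes a deterministic, measure-preserving flow in the recorded variables $\gr{\bar{\omega}}_N$, and finally verifies the identity on elementary symmetrized Dirac measures $\Delta(\gr{z}_N,\gr{\bar{\omega}}_N)$. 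A time-splitting $0=t_0<t_1<\cdots<t_K=t$ with at most one shock per interval, combined with the semigroup property of the marginals, reduces everything to an explicit check on one interval: either free flight (only $\mathcal{T}_{1,1}$ survives) or a single collision, where the gain part of $\mathcal{T}_{1,2}$ and the ``virtual particle'' part of $\mathcal{T}_{1,1}$ are written out and seen to compensate. The trade-off is clear: the paper's argument is combinatorial and pointwise on atomic measures, at the cost of the duality formalism; yours is more familiar and yields as a by-product the differential form of the hierarchy, which the paper never even states.

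However, as written your proposal has a genuine gap exactly at the step you defer to the end. For $W_{0,N}$ merely in $L^1\cap L^\infty$, the operators $C^\epsilon_{s,s+1}$ require evaluating $f_N^{(s+1)}(t)$ on the measure-zero manifolds $\{|x_i-x_{s+1}|=\epsilon\}$, and your boundary-flux cancellation requires traces of $W_N(t)$ on $\{x_j\in\partial\Lambda\}$ satisfying the reflection balance; neither trace exists for free for an $L^\infty$ density transported by a discontinuous flow, and ``approximating by smooth configurations or arguing directly along characteristics'' is precisely the notoriously delicate point for hard spheres. This trace problem is the reason the paper (and \cite{SP}) chose the Dirac-measure route, which sidesteps it entirely. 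Your identification of the key structural fact — that $K_x(v^{\In}|v^{\Out})(v^{\Out}\cdot n(x))_+dv^{\Out}$ being a probability measure makes the net physical-boundary flux of the particles $j>s$ vanish, so the diffuse boundary adds no coupling term — is correct and is indeed the new ingredient relative to the boundaryless case; but a complete proof along your lines would have to supply the trace justification as its main technical content, not as a remark.
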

	
	The strategy of the proof (presented in section \ref{Proof of the theorem théorème sur validité de BBGKY} below) is an adaptation of \cite{SP}.
	
	\subsection{The pseudotrajectories development} 
	We begin by rewriting \eqref{BBGKY Duhamel serie} with an explicit "characteristic" formula associated to the \emph{interacting process}.
	
	We define for $r,s\in\mathbb{N}$ the \emph{collision tree} $a=(a(k))_{s< k \leq s+r}$. It is a finite sequence such that a $a(k)$ is in $\{1,\cdots k-1\}$, and we denote $\mathfrak{A}_s^{r+s}$ the set of all collision trees. We construct the \emph{pseudotrajectory} $\zeta^\epsilon( \tau, t,\gr{z}_s, \gr{(t,\nu,\bar{v})}_{s+1,s+r}, \gr{\bar{\omega}}_{r+s} ,a,\gr{\sigma})$ for $\tau\in[0,t]$, $\gr{(t,\nu,\bar{v})}_{s+1,s+r} := (t_i,\nu_i,\bar{v}_i)_{s<i\leq s+r}\in (\mathbb{R}\times\mathbb{S}^2\times\mathbb{R}^3)^r$  with $t>t_{s+1}\cdots>t_{s+r}>0$, $\gr{\bar{\omega}}_{r+s}=(\bar{\omega}_1,\cdots,\bar{\omega}_{r+s})\in\Omega^{r+s}$, $a\in\mathfrak{A}_s^{s+r}$ and $\gr{\sigma} := (\sigma_{s+1},\cdots,\sigma_{s+r})\in \{\pm1\}^r$. The number of particles of $\zeta^\epsilon(\tau)$ is not constant: for $\tau$ between $t_k$ and $t_{k+1}$ there are $s+k$ particles (by convention, $t_s:=t$ and $t_{r+s+1}=0$). Finally we denote $\gr{\bar{\omega}}_{r+s}^\epsilon(\tau)$ the evolution of the reflection parameters. We define $(\zeta^\epsilon(\tau),\gr{\bar{\omega}}^\epsilon_{r+s}(\tau):= (x^\epsilon_1(\tau),v^\epsilon_1(\tau),\cdots,x^\epsilon_{s+r(\tau)}(\tau),v^\epsilon_{s+r(\tau)}(\tau),\gr{\bar{\omega}}^\epsilon_{r+s}(\tau))$ (the number of particles depends on time) by
	\begin{itemize}
		\item $(\zeta^{\epsilon}(t),\gr{\bar{\omega}}_{r+s}^\epsilon(t)) := (\gr{z}_s,\gr{\bar{\omega}}_{r+s})$
		\item  for $\tau \in(t_{k+1},t_k)$, $\gr{\bar{\omega}}^\epsilon_{k+1,r+s}(\tau)$ is constant and
		\[(\zeta_{1,k}^\epsilon(\tau),\gr{\bar{\omega}}^\epsilon_{1,k}(\tau))=\phi^{\epsilon,(\tau-t_k)}_{k}\left(\zeta_{1,k}^\epsilon(t_k^+), \gr{\bar{\omega}}^\epsilon_{1,k}(t_k^+)\right)\]
		\item at time $t_k^+$, a particle is added at the position  $x^\epsilon_{k}(t_k)=x^\epsilon_{a(k)}(t_k)+\epsilon\nu_k$. If $\sigma_k =1$, we will have $\nu_k\cdot(\bar{v}_{k}-v^\epsilon_{a(k)}(t_k^+))>0$ and the velocities $(v_{a(k)}^\epsilon(t_k^-),{v}^\epsilon_{k}(t_k^-))$ are given by the usual scattering
		\[\left\lbrace\begin{split}
		&v^\epsilon_{a(k)}(t_k^-) = v^\epsilon_{a(k)}(t^+_k) - \nu_k\cdot\left(v^\epsilon_{a(k)}(t^+_k)-\bar{v}_{k}\right)\nu_k\\
		&v_{k}^\epsilon(t_k^-)= \bar{v}_{k}+\nu_k\cdot\left(v_{a(k)}^\epsilon(t^+_k)-\bar{v}_{k}\right)\nu_k
		\end{split}\right.\] 
		Else if $\sigma_k =-1$, $\nu_k\cdot(\bar{v}_{k}-v^\epsilon_{a(k)}(t_k^+))<0$ and we will have no scattering. The velocities at time $t_k^-$ are just $(v_{a_{k}}^\epsilon(t_k^+),\bar{v}_{k})$,
		\item Here and below, with a slight abuse of notation, $(x^\epsilon(\tau),v^\epsilon(\tau))$ designate the coordinates of pseudotrajectories (which are different from coordinates of the stochastic trajectories introduced in section \ref{Construction of the stochastic process}).
	\end{itemize}
	
	We denote $\mathcal{G}^\epsilon(\gr{z}_s,t,a,\gr{\sigma})$ the set of admissible coordinates, \textit{i.e.} the $(\gr{(t,\nu,\bar{v})}_{s+1,s+r},\gr{\bar{\omega}}_{r+s})$ such that the pseudo-trajectory is well defined according to the previous iteration, and 
	\[d\Lambda^\epsilon_{a,\gr{\sigma}}(\gr{(t,\nu,\bar{v})}_{s+1,s+r},\gr{\bar{\omega}}_{s+r}) = \left(\prod_{k=s+1}^{s+r} \left[\sigma_k \nu_k\cdot(\bar{v}_{k}-v^\epsilon_{a(k)}(t_k^+))\right]_+ dt_k d\bar{v}_{k}d\nu_k\right)d\mathbb{P}^{s+r}(\gr{\bar{\omega}}_{s+r}).\]
	Then the formula \eqref{BBGKY Duhamel serie} becomes
	\begin{multline}
	f_N^{(s)}(t,\gr{z}_s)= \sum_{r=0}^{N-s}\alpha(N-s,r)\epsilon^{2r}\sum_{a\in\mathfrak{A}_s^{s+r},\gr{\sigma}} \sigma_{s+1}\cdots\sigma_{s+r}\\
	\times\int_{\mathcal{G}^\epsilon(\gr{z}_s,t,a,\gr{\sigma})}d\Lambda^\epsilon_{a,\gr{\sigma}}(\gr{(t,\nu,\bar{v})}_{s+1,s+r},\gr{\bar{\omega}}_{s+r}) f_{0,N}^{(s+r)}(\zeta^\epsilon(0)).
	\end{multline}

	\subsection{Proof of Theorem \ref{théorème sur validité de BBGKY}}\label{Proof of the theorem théorème sur validité de BBGKY}
	Now we give a dual form of the previous equation. The idea is to look at the application $(\gr{z}_s,\gr{(t,\nu,\bar{v})}_{s+1,s+r},\gr{\bar{\omega}}_{s+r})\mapsto (\zeta^\epsilon(0),\gr{\bar{\omega}}^\epsilon(0))$ from $\mathcal{D}_\epsilon^s\times \mathcal{G}^\epsilon(a,\gr{\sigma})$ to $(\mathcal{D}^{s+r}_\epsilon\times\Omega^{s+r})\setminus\mathcal{B}^{s+r}
	$ which is a local homeomorphism that sends the measure $\epsilon^{2r} d\gr{z}_sd\Lambda^\epsilon$ into $d\zeta^\epsilon(0)d\mathbb{P}^{r+s}(\gr{\bar{\omega}}^\epsilon(0))$. It is not injective since an initial data can give different pseudotrajectories depending on whether a collision is seen as the \emph{creation} of a particle in the pseudotrajectory or as a \emph{recollision} (a collision between two particles that already exist). Nevertheless the number of collisions is locally constant and finite. Then indexing on $a$, $\gr{\sigma}$ and a new discrete parameter $M\in[1,\bar{M}]\subset\mathbb{N}$, we can define the inverse flow $(\zeta_s^{b,\epsilon},\gr{\bar{\omega}}_{s+r}'')(t,\gr{z}_{s+r},\gr{\bar{\omega}}_{s+r},a,\gr{\sigma},M)$. Because the number of collisions is almost surely finite, $\bar{M}$ is also almost surely finite and locally constant (we put $\bar{M}=0$ outside the image of the homeomorphism). Thus the equation \eqref{BBGKY Duhamel serie} can be rewritten in the weak sense as for all bounded continuous functions $\varphi$ defined on $\mathcal{D}^s_\epsilon$,
	\begin{multline}
	\int_{\mathcal{D}_\epsilon^s} \varphi(\gr{z}_s)f^{(s)}_N(t,\gr{z}_s)d\gr{z}_s=\sum_{r=0}^{N-s}\alpha(N-s,r)\sum_{a\in\mathfrak{A}_s^{s+r},\gr{\sigma}} \sigma_{s+1}\cdots\sigma_{s+r}\\
	\times\int_{\mathcal{D}_\epsilon^{s+r}\times\Omega^{r+s}}\sum_{M=1}^{\bar{M}}\varphi(\zeta^{b,\epsilon}_s(t))f^{(s+r)}_{0,N}(\gr{z}_{s+r}) d\gr{z}_{s+r}d\mathbb{P}^{s+r}(\gr{\bar{\omega}}_{s+r}).
	\end{multline}
	
	To check this equality, we prove it in the probability space. $\Omega$ is a compact metric space as countable product of compact spaces, so $\mathcal{D}_\epsilon^N\times\Omega^N$ is a Polish space.
	
	We find an analogue to $T_N^\epsilon$ on $\mathcal{D}_\epsilon^N$. Let $\mu_{0,N}$ be a measure in $\mathcal{M}_0\left(\mathcal{D}_\epsilon^N\times\Omega^N\right)$, the set of finite signed measures stable under permutation of variables and such that $\mathcal{B}^N$ has zero measure. We define $H_N(t)\mu_{0,N}$ by duality: for each $\varphi$ bounded continuous function on $\mathcal{D}^N_\epsilon\times\Omega^N$,
	\begin{equation}
	\int_{\mathcal{D}_\epsilon^N\times\Omega^N} \varphi(\gr{z}_N,\gr{\bar{\omega}}_N)d(H_N(t)\mu_{0,N})(\gr{z}_N,\gr{\bar{\omega}}_N):= \int_{\mathcal{D}_\epsilon^N\times\Omega^N}\varphi(\phi^{\epsilon,t}_N(\gr{z}_N,\gr{\bar{\omega}}_N)) d\mu_{0,N}(\gr{z}_N,\gr{\bar{\omega}}_N).
	\end{equation}
	
	Next, let $(\mu_{0,N}^{(s)})_{1\leq s\leq N}$ be the marginals of $\mu_{0,N}$, and $(\mu_N^{(s)}(t))_{1\leq s\leq N}$ the marginal of $H_N(t)\mu_{0,N}$. Then it suffices to prove that for all bounded continuous functions $\varphi_s$ on $\mathcal{D}_\epsilon^s\times\Omega^s$, we have
	\begin{equation}
	\begin{split}
	\int_{\mathcal{D}_\epsilon^s\times\Omega^s}&\varphi_s(\gr{z}_s,\bar{\gr{\omega}}_s) d\mu_N^{(s)}(t)(\gr{z}_s,\bar{\gr{\omega}}_s)\\
	&=\sum_{r=0}^{N-s}\alpha(N-s,r)\sum_{a\in\mathfrak{A}_s^{s+r},\gr{\sigma}} \sigma_{s+1}\cdots\sigma_{s+r}\int_{\mathcal{D}_\epsilon^{s+r}\times\Omega^{s+r}}\sum_{M=1}^{\bar{M}}\varphi_s(\zeta^{b,\epsilon}_s(t),\gr{\bar{\omega}}''_{s}(t)) d\mu_{0,N}^{(s+r)}\\
	&=: \sum_{r=0}^{N-s}\int_{\mathcal{D}_\epsilon^{r+s}\times\Omega^{r+s}}\varphi_s(\gr{z}_s,\gr{\bar{\omega}}_s)d\mathcal{T}_{s,s+r}(t)(\mu_{0,N}^{s+r})(\gr{z}_s,\gr{\bar{\omega}}_s)
	\end{split}
	\end{equation}
	where the $\mathcal{T}_{s,s+r}(t)(\mu_{0,N}^{(s+k)})$ are defined by duality.
	
	To prove this equality, it is sufficient to prove it for elementary measures.
	
	Let $(\gr{z}_N,\gr{\bar{\omega}}_N)\in\mathcal{D}^N_{\epsilon}\times\Omega^N$ be such that the dynamics is well defined. We consider the measure
	\[\Delta(\gr{z}_N,\gr{\bar{\omega}}_N)(\zeta_1,\bar{\omega}'_1,\cdots,\zeta_N,\bar{\omega}'_N) := \frac{1}{N!} \sum_{\sigma\in \mathfrak{S}_N^N}\prod_{i=1}^N \delta_{(\zeta_i,\bar{\omega}'_i)=(z_{\sigma(i)},\bar{\omega}_{\sigma(i)})}\]
	where $\mathfrak{S}_k^N$ is the set of injection of $\{1,\cdots,k\}$ into $\{1,\cdots,N\}$. Its $k$-th marginal is 
	\[\left(\Delta(\gr{z}_N,\gr{\bar{\omega}}_N)\right)^{(k)}(\zeta_1,\bar{\omega}'_1,\cdots,\zeta_k,\bar{\omega}'_k) = \frac{(N-k)!}{N!}\sum_{\sigma\in \mathfrak{S}_k^N}\prod_{i=1}^k \delta_{(\zeta_i,\bar{\omega}'_i)=(z_{\sigma(i)},\bar{\omega}_{\sigma(i)})}\]
	and we have immediately its evolution with respect to the hard spheres dynamics:
	\[H_N(t)\Delta(\gr{z}_N,\gr{\bar{\omega}}_N) = \Delta\left(\phi^{\epsilon,-t}_N(\gr{z}_N,\gr{\bar{\omega}}_N)\right). \]
	
	First there exists a finite sequence of times $0=t_0<t_1<\cdots<t_K = t$ such that on each segment $[t_i,t_{i+1}]$ there is only one collision or a reflection. In addition we impose that if the collision or the reflection implied the particles $k$ and $l$,
	\[\left\lbrace\begin{split}
	&\phi^{\epsilon,\tau}_{N-1}(\gr{z}^\epsilon_{1,k-1}(t_i),\gr{z}^\epsilon_{k+1,N}(t_i),\gr{\bar{\omega}}^\epsilon_{1,k-1}(t_i),\gr{\bar{\omega}}^\epsilon_{k+1,N}(t_i) )\\
	&\phi^{\epsilon,\tau}_{N-1}(\gr{z}^\epsilon_{1,l-1}(t_i),\gr{z}^\epsilon_{l+1,N},\gr{\bar{\omega}}^\epsilon_{1,l-1}(t_i),\gr{\bar{\omega}}^\epsilon_{l+1,N}(t_i) )
	\end{split}\right.\]
	moves like free flow for $\tau \in [0,t_{i+1}-t_i]$. It is possible to construct such sequence because the free flow is well defined and continuous.
	
	Then it is sufficient, in view of the \emph{semigroup property} verified by the marginals, to prove our assumption only for a segment $[0,t_1]$.
	
	To simplify the notation, in the following $\left(\Delta(\gr{z}_N,\gr{\bar{\omega}}_N)\right)^{(k)}$ will be denoted $\Delta_k$. We will prove the formula only for $\Delta_1$, the other cases work in the same way (see \cite{SP} for more details).
	
	Because there is at most one collision in $[0,t_1]$, all the $\mathcal{T}_{1,1+r}(t)\Delta_{1+r}$ vanish for $r\geq 2$.
	
	If there is no collision, then $\Delta_1(t)$ is just the push-forward of $\Delta_1$ by the free flow with diffusion, and $\mathcal{T}_{1,2}(t)\Delta_2$ vanishes. So the formula is verified.
	
	If there is one collision, we can assume without loss of generality that it occurs between particles $1$ and $2$. Then in $\mathcal{T}_{1,1}(t)\Delta_1$ the two first particles are replaced by virtual ones :
	\[\mathcal{T}_{1,1}(t)\Delta_1 = \frac{1}{N}\left(\delta_{(\tilde{z}_1(t),\bar{\omega}_1)} + \delta_{(\tilde{z}_2(t),\bar{\omega}_2)}+\sum_{i=3}^N \delta_{(z_i(t),\bar{\omega}_i(t))}\right),~~\mathrm{with~}\tilde{z}_i(t)=\left(x_i-tv_i,v_i\right). \]
	
	In $\mathcal{T}_{1,2}(t)\Delta_2$, there are two parts: a first part which corresponds to post collision directions : $\frac{1}{N}\left(\delta_{z_1(t),\bar{\omega}_1(t)} +\delta_{z_2(t),\bar{\omega}_2(t)}\right)$, and a second corresponding to the negative Dirac mass in the virtual particles : $-\frac{1}{N}\left(\delta_{(\tilde{z}_1(t),\bar{\omega}_1)} + \delta_{(\tilde{z}_2(t),\bar{\omega}_2)}\right)$, which compensate the previous error.
	
	Finally we get the expected formula.\qed
	
	\section{Boltzmann's hierarchy and \textit{a priori} estimates}\label{Boltzmann's hierarchy and a priori estimates}
	\subsection{Definition of the Boltzmann hierarchy}
	We want now to describe a formal limit of the BBGKY hierarchy \eqref{BBGKY Duhamel serie} when $\epsilon$ tends to $0$ in the scaling $\epsilon^{2}N =1$. For $(f^s_0)_s$ a family of symmetric functions on $(\Lambda\times\mathbb{R}^3)^s$, we define the Boltzmann hierarchy and its series expansion (what we call later the \emph{Boltzmann hierarchy}):
	\begin{equation}
	\begin{split}
	f^s(t) &= \sum_{r=0}^\infty \int_0^t\cdots\int_0^{t_{r-1}}dt_1\cdots dt_r T^0_{s}(t-t_1)C^0_{s,s+1}\cdots C^0_{s+r-1,s+r}T^0_{s+r}(t_r)f_0^{s+r}\\
	&=:\sum_{r=0}^\infty Q_{s,s+r}^0(t)f^{s+r}_0
	\end{split}
	\end{equation}
	where $T^0_{s}(t)$ is the semigroup associated to the dynamics of $s$ punctual particles with reflection in angle (and no collision) and $C^0_{s,s+1}$ is the formal collision operator for punctual spheres :
	\begin{equation}
	C_{s,s+1}^0 := C_{s,s+1}^{0, +} -C_{s,s+1}^{0,-}
	\end{equation}
	\begin{equation}
	C_{s,s+1}^{0, +} f^{s+1}(\gr{z}_s) := \sum_{i=1}^s \int_{\mathbb{S}^{2}\times\mathbb{R}^3} f^{s+1}(\cdots,x_i,v'_i,\cdots,x_i,v_{s+1}')(\nu\cdot(v_{s+1}-v_i))_+d\nu dv_{s+1}
	\end{equation}
	\begin{equation}
	C_{s,s+1}^{0, -} f^{s+1}(\gr{z}_s) := \sum_{i=1}^s \int_{\mathbb{S}^{2}\times\mathbb{R}^3} f^{s+1}(\gr{z}_s,x_i,v_{s+1})(\nu\cdot(v_{s+1}-v_i))_-d\nu dv_{s+1}.
	\end{equation}
	
	Remark that for initial data of the form $(f^{\otimes s}_0)$, the Boltzmann hierarchy is of the form $(f^{\otimes s}(t))$, where $f(t)$ is solution of :
	\begin{equation}
	f(t) = T_1^0(t) f_0 + \int_0^t T_1^0(\tau) C_{1,2}^0 f(\tau)^{\otimes 2} d\tau
	\end{equation}
	which is precisely the Boltzmann equation in the integral form.
	
	To properly define this operator, we have to find a nice functional space on which the $(f_0^s)$ will be defined :
	\begin{definition}
		For $\beta>0$ and $\mu$ two constants, we define the Banach space $X_{\beta,\mu}$ such that $(f^s)\in X_{\beta,\mu}$ if and only if for all $s \in \mathbb{N}^*$, $f^s$ is measurable, symmetric, compatible:
		\begin{equation}
			\forall s\in \mathbb{N}^*,~\forall\gr{z}_s\in\left(\Lambda\times\mathbb{R}^3\right)^{s},~f^s(\gr{z}_s) = \int_{\Lambda\times\mathbb{R}^3} f^{s+1}(\gr{z}_s,z_{s+1})dz_{s+1}
		\end{equation} and 
		\begin{equation}
		\left\Vert (f^s)_s\right\Vert_{\beta,\mu} := \sup_{s\in\mathbb{N}^*} \underset{\gr{z}_s\in\left(\Lambda\times\mathbb{R}^3\right)^s}{\mathrm{essup}} |f^s(\gr{z}_s)|\exp\left(\mu s +\frac{\beta}{2}\|\gr{v}_s\|^2\right)
		\end{equation}
		is finite, with $\|\gr{v}_s\|^2 = \sum_{i=1}^s|v_i|^2$.
		
		We denote the closed subspace of continuous functions $\tilde{X}_{\beta,\mu}:= X_{\beta,\mu}\cap\prod_{s\geq 1} \mathcal{C}((\Lambda\times\mathbb{R}^3)^s)$.
	\end{definition}	

	\begin{theorem}\label{existence de la hierarchie de Boltzmann}
		For $\beta>0,~\mu$, there exist $\beta'>0,~ \mu'$ and a time $T$ such that the Boltzmann hierarchy 
		\[\left\lbrace\begin{split}
		&\tilde{X}_{\beta,\mu} \rightarrow C\left([0,T],X_{\beta',\mu'}\right)\\
		&(f_0^s)_s\mapsto (f^s(t))_s
		\end{split}\right.\]
		is continuous.
	\end{theorem}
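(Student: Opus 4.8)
The plan is to prove continuity by establishing a uniform bound on each term $Q_{s,s+r}^0(t)$ of the Boltzmann series and then summing a geometric series that converges for short time. First I would fix an initial datum $(f_0^s)_s \in \tilde{X}_{\beta,\mu}$ and estimate the norm of a single iterated collision-transport term. The key observation is that the transport semigroups $T^0_{s+j}(t)$ conserve the kinetic energy $\|\gr{v}_{s+j}\|^2$ (the diffuse reflection law preserves $|v|$ particle by particle, and free flow preserves velocities), so they act isometrically on the Gaussian weight $\exp(\frac{\beta}{2}\|\gr{v}\|^2)$. Hence the only loss comes from the collision operators $C^0_{k,k+1}$, which integrate out the velocity $v_{k+1}$ against the cross-section $(\nu\cdot(v_{k+1}-v_i))_\pm$. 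This is the standard mechanism in Lanford-type proofs: each application of $C^0_{k,k+1}$ trades a factor of $k$ (the sum over $i=1,\dots,k$), a factor $e^{-\mu}$ lost on the exponential weight $\exp(\mu(k+1))\to\exp(\mu k)$, and a velocity integral that is controlled by slightly lowering the inverse temperature $\beta$ to some $\beta'<\beta$.

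The central quantitative step is the estimate on the velocity integration. When applying $C^{0,\pm}_{k,k+1}$ to a function bounded by $C\exp(-\mu(k+1)-\frac{\beta}{2}\|\gr{v}_{k+1}\|^2)$, I would bound the resulting integral over $(\nu,v_{k+1})\in\mathbb{S}^2\times\mathbb{R}^3$ by
\[
\int_{\mathbb{S}^2\times\mathbb{R}^3} |\nu\cdot(v_{k+1}-v_i)|\, e^{-\frac{\beta}{2}|v_{k+1}|^2}\,d\nu\,dv_{k+1}
\le C\bigl(1+|v_i|\bigr)\beta^{-2},
\]
since the linear growth in $v_{k+1}$ is absorbed by the Gaussian at the cost of a fixed power of $\beta$. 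The factor $(1+|v_i|)$ is then dominated by transferring an arbitrarily small piece of the Gaussian weight, i.e. by choosing $\beta' < \beta$ so that $(1+|v_i|)\exp(-\frac{\beta-\beta'}{2}|v_i|^2)$ is uniformly bounded; this is exactly why the theorem must allow the output temperature $\beta'$ to degrade. After the $r$ collision operators and the $r$ time integrations over the simplex $0<t_r<\cdots<t_1<t$, which contribute $t^r/r!$, the $r$-th term is bounded by
\[
\|Q_{s,s+r}^0(t)(f_0^\bullet)\|_{\beta',\mu'}
\le C^r\,\frac{t^r}{r!}\,\frac{(s+r)!}{s!}\,e^{-\mu' r}\,\|(f_0^s)\|_{\beta,\mu}.
\]
Here the combinatorial factor $(s+r)!/s!$ comes from the product of the particle counts $s(s+1)\cdots(s+r-1)$ produced by the sums over $i$ at each collision stage.

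To sum the series I would choose $\mu'$ small enough (more negative, so $e^{-\mu'}$ is large is the wrong direction — rather I pay the loss by taking $\mu'<\mu$ and spending the gap $e^{-(\mu-\mu')}$ to control the geometric ratio) and then the factorials combine so that the series behaves like $\sum_r (Ct\,2^{s})^r$ up to the usual $2^{s+r}$ arising from $(s+r)!/(s!\,r!)\le 2^{s+r}$. Concretely, absorbing the binomial factor into the exponential weight $e^{\mu' s}$ reduces the sum to a geometric series with ratio proportional to $t$, which converges provided $t<T$ for $T$ small enough depending only on $\beta,\mu,\beta',\mu'$. Continuity in time and continuity of the map $(f_0^s)\mapsto(f^s(t))$ then follow because each term is linear and continuous in the initial datum and the convergence of the series is uniform on $[0,T]$.

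I expect the main obstacle to be the careful bookkeeping of the diffuse-reflection semigroup $T^0_{s+j}$: unlike the torus case, the reflection at the boundary $\{0,1\}\times\mathbb{T}^2$ redistributes the direction of each velocity while keeping $|v|$ fixed, so I must verify that the weighted $L^\infty$ norm is genuinely preserved (and not merely bounded) by the transport-and-reflection flow, and that this holds for the full hierarchy with its compatibility constraint. Once the isometry property of $T^0$ on the Gaussian weight is established, the remaining estimates are the standard Lanford bookkeeping and pose no essential difficulty.
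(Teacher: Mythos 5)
Your quantitative scheme is exactly the paper's own bookkeeping (its subsection on continuity estimates): the Gaussian weight $e^{-\beta\|\mathbf{v}\|^2/2}$ is preserved by the transport-with-reflection semigroup because diffuse reflection conserves $|v|$; each collision operator costs a factor of order $\beta^{-3/2}(\beta^{-1/2}+|v_i|)$, absorbed by lowering the inverse temperature along a sequence $\beta=\beta_r>\cdots>\beta_0=3\beta/4$; the time simplex contributes $t^r/r!$; the combinatorial factor (the paper uses $(s+r)^r/r!\le e^{s+r}$ via Stirling, you use $(s+r)!/(s!\,r!)\le 2^{s+r}$, both fine) is paid for by the loss $\mu\mapsto\mu-1$; and the resulting geometric series converges for $t<T$ with $T\sim \beta^{2}e^{\mu}/C$. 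So that half of your plan reproduces the paper's estimate \eqref{estimation de Q^0} and poses no problem.

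The gap is that you never establish that $Q^0_{s,s+r}(t)f_0^{s+r}$ is a well-defined object, and this is precisely what the paper identifies as \emph{the main difficulty} (Section~\ref{Well-posedness of the operators}). The collision operator $C^0_{s,s+1}$ evaluates $f^{s+1}$ on sets of measure zero: the created particle sits at the same position $x_i$ as its parent, and the post-collisional velocities run over a sphere. Such an operator does not act on a.e.-defined measurable functions, so one needs some continuity of $T^0_{s+1}(t)f^{s+1}$ before applying $C^0_{s,s+1}$. In the torus or whole-space setting this is automatic, since free flow maps continuous functions to continuous functions; here it fails: the diffuse-reflection semigroup $T^0_1(t)$ sends continuous data to functions that are discontinuous across the future sets $\mathcal{F}(t)$ issued from the boundary. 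The paper's proof therefore has a whole additional layer: a weak propagation-of-continuity lemma (discontinuities stay confined to finitely many sets $\mathcal{F}(t_i)$, proved by writing the boundary trace as the series $\sum_i B^i g_b$ and using the bound of at most $t|v|$ reflections to make it locally finite), followed by a verification, via Carleman's parametrization, that collision integrals of functions discontinuous on such measure-zero sets are still well defined and again continuous off future sets, so that the construction can be iterated $r$ times. A telltale sign of the omission is that your argument never uses the hypothesis that the data lie in $\tilde{X}_{\beta,\mu}$ (continuous functions) rather than $X_{\beta,\mu}$, whereas the theorem needs it. Relatedly, your closing paragraph misidentifies the obstacle: preservation (rather than mere boundedness) of the weighted $L^\infty$ norm under $T^0$ is immediate from $|v|$-conservation and the maximum principle, and the estimates only require an upper bound anyway; the genuine issue created by the stochastic boundary is the loss of continuity just described.
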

	The rest of this section is devoted to the proof of the theorem. 
	
	In a first time we will prove that $Q^0_{s,s+r}(t)f_0^{s+r}$ is well defined if $(f_0^s)$ is in $\tilde{X}_{\beta,\mu}$. Then the continuity estimates of \cite{GST} and \cite{Lanford} show that the sum is well defined and continuous.
	
	\subsection{Well-posedness of the operators $Q_{s,s+r}^0(t)$}\label{Well-posedness of the operators}
	The main difficulty is that the transport semigroup $T_1^0(t)$ does not send continuous functions onto continuous functions: discontinuities will appear in the future of $\{0\}_t\times\partial\Lambda\times\mathbb{R}^3$. Thus we have to check that we can apply $C_{s,s+1}^0$ on the function $T_{s+1}^0(t)f^{s+1}_0$.
	
	We will restrict ourselves to the case where $f_0^{s}= g_1\otimes\cdots\otimes g_s$. We define the \emph{future set} $\mathcal{F}(t)\subset \mathbb{R}^+\times\Lambda\times\mathbb{R}^3$ as 
	\begin{equation}
	\mathcal{F}(t) := \left\lbrace (\tau,x,v)|x-(\tau-t)v\in\partial\Lambda,~\tau\geq t\right\rbrace
	\end{equation}
	and $\mathcal{F}(t)\cap\{t =0\} := \{(x,v)\in\mathbb{R}^3,~(0,x,v)\in\mathcal{F}(t)\}$
	
	Then we have the following lemma, equivalent to a "weak" propagation of continuity:
	\begin{lemma}
		Let $g$ be a measurable function on $\Lambda\times\mathbb{R}^3$, continuous outside $(\mathcal{F}(t_1)\cup\cdots\cup\mathcal{F}(t_r))\cap\{t=0\}$ for $(t_1,\cdots,t_r)\in (\mathbb{R}_-^*)^s$ and dominated by $Ce^{-\beta|v|^2/2}$ for some $\beta>0$. Then $T^0_1 g :(t,x,v)\mapsto (T^0_1(t)g)(x,v)$ is continuous outside $\mathcal{F}(0)\cup\mathcal{F}(t_1)\cup\cdots\cup\mathcal{F}(t_r)$ and dominated by $Ce^{-\beta|v|^2/2}$.
	\end{lemma}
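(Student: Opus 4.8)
The plan is to study the backward characteristic flow underlying $T^0_1(t)$ and to exploit the fact that the diffuse reflection kernel averages out the velocity discontinuities of $g$. By the characteristic formula $T^0_1(t)g(x,v)=\int_\Omega g\big(\phi^{0,-t}_1(x,v,\bar\omega)\big)\,d\mathbb P(\bar\omega)$, the trajectory moves freely backward until it meets $\partial\Lambda$, where the velocity is reconstructed with modulus $|v|$ from the recorded angle. Since free transport preserves $v$ and every reflection preserves $|v|$, the speed is constant along the whole trajectory, so the endpoint velocity has modulus $|v|$; the bound $|T^0_1(t)g|\le Ce^{-\beta|v|^2/2}$ then follows at once from the domination of $g$ and from $d\mathbb P$ being a probability measure. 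For the continuity I would first record the recursion obtained by conditioning on the first backward reflection: writing $s_1=s_1(t,x,v)\in(0,t)$ for the first backward exit time and $y_1=x-s_1v\in\partial\Lambda$ for the exit point,
\[
T^0_1(t)g(x,v)=c_3\int_{\omega\cdot\e<0}\big(T^0_1(t-s_1)g\big)\big(y_1,|v|\Gamma(y_1)\omega\big)\,(\omega\cdot\e)_-\,d\omega
\]
when the backward trajectory meets $\partial\Lambda$ on $[0,t]$, and $T^0_1(t)g(x,v)=g(x-tv,v)$ otherwise.

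On the open region where the backward trajectory stays in the interior, $T^0_1(t)g(x,v)=g(x-tv,v)$; since $(t,x,v)\mapsto(x-tv,v)$ is continuous and $g$ is continuous off $\bigcup_i\mathcal F(t_i)\cap\{t=0\}$, the composition is continuous except where $x-(t-t_i)v\in\partial\Lambda$, that is, exactly on $\bigcup_i\mathcal F(t_i)$. The boundary of this region, where $s_1\to t$ and the free endpoint $x-tv$ reaches $\partial\Lambda$, is contained in $\mathcal F(0)=\{x-tv\in\partial\Lambda\}$, so here the discontinuities sit precisely on $\mathcal F(0)\cup\bigcup_i\mathcal F(t_i)$, as claimed.

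On the region with at least one backward reflection I would argue by induction on the number of reflections, which by the slab geometry is finite and locally bounded (each reflection requires a full crossing of $[0,1]$ in the $\e$ direction, costing time $\ge 1/|v|$); conditioning on the first reflection strictly reduces this deterministic bound, since the remaining trajectory starts on $\partial\Lambda$ and each further reflection costs a full crossing, so the recursion is well founded. Fix $(t,x,v)\notin\mathcal F(0)\cup\bigcup_i\mathcal F(t_i)$. Off $\mathcal F(0)$ the first crossing is transversal ($v\cdot\e\neq0$, and the flat boundary forbids tangential touching), hence $(t,x,v)\mapsto(s_1,y_1)$ is continuous; by the inductive hypothesis the inner map $(t',x',v')\mapsto (T^0_1(t')g)(x',v')$ is continuous off $\mathcal F(0)\cup\bigcup_i\mathcal F(t_i)$. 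It then remains to pass the limit under the $\omega$ integral, for which I would invoke dominated convergence, the integrand being bounded by $Ce^{-\beta|v|^2/2}$: for fixed $(t,x,v)$ the set of directions $\omega$ for which $\big(t-s_1,y_1,|v|\Gamma(y_1)\omega\big)$ meets $\mathcal F(0)\cup\bigcup_i\mathcal F(t_i)$ is, because $y_1\in\partial\Lambda$ and the boundary is flat with normal $\pm\e$, cut out by finitely many equations of the form $\omega\cdot\e=\mathrm{const}$ (one per sheet $\mathcal F(0),\mathcal F(t_i)$), each defining a circle on the sphere, hence a null set; moreover these constants depend continuously on $(t,x,v)$ through $s_1$, so the bad set of directions moves continuously and the integrand converges for almost every $\omega$ along any sequence $(t_n,x_n,v_n)\to(t,x,v)$.

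The main obstacle is exactly this last step: showing that the diffuse reflection integral genuinely regularizes the velocity discontinuities carried by $g$, and that the only configurations where the regularization fails, namely where a positive measure set of directions $\omega$ simultaneously meets the discontinuity sheets or where the first exit degenerates, are precisely the points of $\mathcal F(0)\cup\bigcup_i\mathcal F(t_i)$. This is where the special geometry of $\Lambda=[0,1]\times\mathbb T^2$ is indispensable: the flatness of $\partial\Lambda$ and the constant normal $\pm\e$ reduce the reflection times to the linear equation $x_\e-sv_\e\in\{0,1\}$ and force the bad directions onto level sets $\{\omega\cdot\e=\mathrm{const}\}$, which is what keeps them a null, continuously varying family. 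This is also the reason the statement is not claimed for a general domain.
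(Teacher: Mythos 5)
Your proof is correct and follows essentially the same route as the paper: the paper organizes the identical computation as a locally finite series $g_t=\sum_{i\ge 0}B^i g_b$ for the boundary trace (where $B$ is the one-reflection operator), proves each term continuous by exactly your null-set-of-bad-directions argument for the angular integral, and controls the number of terms by the slab-crossing bound (at most $t|v|$ reflections) that also makes your induction well founded. Your first-reflection recursion is just the unrolled form of that series, and your speed-conservation plus probability-measure argument for the Gaussian bound is what the paper calls the maximum principle.
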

	\begin{proof}
		We will show that the trace of $T^0_1 g$ on  $\Sigma^+ := \{(x,v)\in\partial\Lambda\times\mathbb{R}^3 ,~ v\cdot n(x)>0\}$ is continuous. In the following we will denote $t_0:=0$.
		
		We define the function $g_b$ on $\mathbb{R}^+\times \Sigma^+$ by :
		\[g_b(t,x,v) := \int_{\mathbb{S}^2}g(x-t|v|\omega,|v|\omega)(\omega\cdot n(x))_-c_d d\omega \]
		with $g$ extended by $0$ outside $\Lambda$. A particle reflected with velocity $|v|\omega$ crosses the domain in time $1/|\omega\cdot\e||v|$ and $g$ is continuous outside $(\mathcal{F}(0)\cup\mathcal{F}(t_1)\cup\cdots\cup\mathcal{F}(t_r))\cap\{t=0\}$. Thus the function $\omega\mapsto g(x-t|v|\omega,|v|\omega)$ is continuous outside the set of directions $\omega$ such that the particle meets the boundary at time $t_i$ for some $i$: 
		\[\bigcup_{i=0}^r\left\lbrace\omega\in\mathbb{S}^2,\frac{1}{|\omega\cdot \e||v|}= (t-t_i)\right\rbrace\]
		which is of zero measure in $\mathbb{S}^2$. Thus $g_b$ is continuous on $\mathbb{R}^+\times\Sigma^+$.
		Now we define the operator $B :C(\mathbb{R}^+\times \Sigma^+)\rightarrow C(\mathbb{R}^+\times \Sigma^+)$ by
		\[Bf(t,x,v):= \int_{\mathbb{S}^{2}} f\left(t-\frac{1}{|\omega\cdot\e||v|},x-\frac{\omega}{|\omega\cdot\e|},|v|\omega\right) (n(x)\cdot \omega)_-c_d d\omega\]
		where  $f$ is extended by $0$ for $t<0$. Using the same argument than for $g_b$, $Bf$ is continuous. $Bf$ is the image of a distribution $f$ on $\mathbb{R}^+\times\Sigma^+$ after one more reflection.
		
		Applying the formula \eqref{caractéristique proba} in the case of one particle, we obtain that the trace of $T_1^0g$ on $\mathbb{R}^+\times\Sigma^+$ is 
		\begin{equation}\label{Série pour la trace} g_t :=\sum_{i=0}^\infty B^ig_b.\end{equation}
		
		We have to show that this sum converges normally on every compact set. In fact the sum is locally finite: 
		\begin{lemma}\label{estimation des reflections}
			For $(x,v)\in\Lambda\times \mathbb{R}^3$, the stochastic process will have at most $t|v|$ reflections.
		\end{lemma}
		\begin{proof}
			To have $N$ reflections, a particle has to travel at least a distance $N$, so $N\leq t|v|$.
		\end{proof}
		
		From this bound we deduce that the series \eqref{Série pour la trace} is finite on every compact set, and so $g_t$ is continuous. $T_1g$ is solution of the problem:
		\[\left\lbrace\begin{split}
		&\partial_t f+v\cdot\nabla_x f = 0\\
		&f_{|\{0\}\times \Lambda\times \mathbb{R}^3} = g\\
		&f_{\mathbb{R}_+\times\Sigma^+}=g_t
		\end{split}\right.\]
		where we fix all the boundary condition. Because $g$ is continuous outside  $(\mathcal{F}(t_1)\cup\cdots\cup\mathcal{F}(t_r))\cap\{t=0\}$ and $g_t$ is continuous outside  $\mathcal{F}(t_1)\cup\cdots\cup\mathcal{F}(t_r)$, $T^0_1g$ is continuous outside  $\mathcal{F}(t_0)\cup\cdots\cup\mathcal{F}(t_r)$.
		
		The bound follows from maximum principle.
	\end{proof}
	
	Let $f_1,\cdots,f_s$ be measurable functions, continuous outside $\mathcal{F}(t_1)\cup\cdots\cup\mathcal{F}(t_r)$ and dominated by $Ce^{-\beta|v|^2/2}$. Then $T^0_s(t)\left(f_1\otimes\cdots\otimes f_s\right)$ is equal to $ (T^0_1(t)f_1)\otimes\cdots\otimes(T^0_1(t)f_s)$ and
	\[C_{s-1,s}^0\left(f_1\otimes\cdots\otimes f_s\right) = \sum_{i = 1}^{s-1} f_1\otimes\cdots\otimes C_{1,2}^0 \left(f_i\otimes f_s\right)\cdots\otimes f_{s-1}.\]
	
	If we fix $t$ and $x$, $f_i$ has a discontinuity for  $v \in \bigcup_{i=1}^r\left\lbrace u\in\mathbb{R}^3,~(t,x,u)\in\mathcal{F}(t_i)\right\rbrace$ the set of velocities such that the particles meet the boundary at time $t_i$, which is a union of planes of dimension $2$. Using the "Calerman's collision parametrization" (see Appendix \ref{coordonné de Calerman}),
	\[\begin{split}C_{1,2}^0 f_1\otimes f_2(t,x,v) &:=
	- f_1(t,x,v)\int_{\mathbb{R}^2\times\mathbb{S}^{2}} f_2(t,x,v_*) ((v-v_*)\cdot\sigma)_- dv_*d\sigma \\
	&~~+ \int_{\mathbb{R}^3\times\mathbb{S}^{2}} f_1(t,x,v')f_2(t,x,v'_*) ((v-v_*)\cdot\sigma)_- dv_*d\sigma\\
	&:=
	- f_1(t,x,v)\int_{\mathbb{R}^2\times\mathbb{S}^{2}} f_2(t,x,v_*) ((v-v_*)\cdot\sigma)_- dv_*d\sigma \\
	&~~+ \int_{\{(v',v_*')\in\mathbb{R}^6,~(v'-v)\cdot(v'_*-v)=0\}} f_1(t,x,v')f_2(t,x,v'_*)  dv'dS(v'_*)
	\end{split}\]
 	where $dS(v'_*)$ is the Lebesgue measure on the affine plane $\{v_*'\in\mathbb{R}^3,~(v'-v)\cdot(v_*-v)=0\}$. In the first term of the sum, we integrate $f_2$ on a space of dimension $3$, so the set of discontinuity is of zero measure, so the first term is continuous outside $\mathcal{F}(t_1)\cup\cdots\cup\mathcal{F}(t_r)$. For the second term, because we integrate $v'$ on the full space, for almost all $v'$, $f_1(t,x,\cdot)$ is continuous at $v'$ and $v'_*$ lives in a plane transverse to the set of discontinuities of $f_2(t,x,\cdot)$. Thus $f_1(t,x,v')f_2(t,x,v'_*)$ is integrable and the second term is continuous.
	
	In addition we have the following bound:
	\begin{equation}\label{continuité de C1,2}
	\begin{split}\left\vert C_{1,2}^0 f_1\otimes f_2(t,x,v)\right\vert&\leq C^2 \int \left(e^{\frac{-\beta\left(|v|^2+|v_*|^2\right)}{2}} +e^{\frac{-\beta\left(|v'|^2+|v_*'|^2\right)}{2}}\right) ((v-v_*)\cdot\sigma)_-d\sigma dv_*\\
	&\leq AC^2 \int e^{\frac{-\beta\left(|v|^2+|v_*|^2\right)}{2}} (|v|+|v_*|)dv_* \\
	&\leq C^2 A \beta^{-3/2}\left(\beta^{-1/2} +|v|\right)e^{-\beta |v|^2/2}
	\end{split}\end{equation}
	for some constant $A$. Then for all  $\beta'<\beta$, we can bound $C_{1,2}^0f_1\otimes f_2$ by $\tilde{C}e^{-\beta' |v|^2/2}$. Finally we always have integrability  with respect to $v$. 
	
	To summarize, if $f_1,\cdots,f_{s+1}$ are measurable functions, continuous outside $(\mathcal{F}(t_1)\cup\cdots\cup\mathcal{F}(t_r))\cap\{t=0\}$ and dominated by $Ce^{-\beta|v|^2/2}$, then $C_{s,s+1}^0T_{s+1}^0(t)(f_1\otimes\cdots \otimes f_{s+1}) = \sum_{i=1}^s f_1^i\otimes\cdots \otimes f_{s}^i$ where the $(f_j^i)_{i,j}$ are continuous outside $\mathcal{F}(0)\cup\mathcal{F}(t_1)\cup\cdots\cup\mathcal{F}(t_r)$ and dominated by $\tilde{C}e^{-\beta'|v|^2/2}$ for all $\beta'<\beta$.
	
	Iterating the process  \[(t_{s+1},\cdots,t_{s+r})\mapsto T^0_s(t-t_{s+1})C^0_{s,s+1}\cdots C^0_{s+r-1,s+r}T^0_{s+r}(t_{s+r})f^{\otimes (s+r)}\] is equal to  
	$\sum_i f_1^i\otimes\cdots \otimes f_{s}^i$ where the $(f_j^i)_{i,j}$ are continuous outside $\mathcal{F}(0)\cup\mathcal{F}(t_{s+1})\cup\cdots\cup\mathcal{F}(t_{s+r})$. Fixing $(t,\gr{x}_s,\gr{v}_s)$, outside a finite number of $(t_{s+1},\cdots,t_{s+r})$, the $f_j^i$ are continuous near $(t_m,x_n,v_n)_{\tiny\begin{matrix}m\in\{1,\cdots,r\}\\n\in\{1,\cdots,s\}\end{matrix}}$. By integrating we get that $Q^0_{s,s+r}(t)f^{\otimes (s+r)}$ is well defined on $(\Lambda\times\mathbb{R}^3)^s$ and continuous outside $\mathcal{F}(0)$.
	
	\begin{remark}
	Note that if $f_0$ verifies the boundary condition
	\begin{equation}\label{compability condition}
	\forall (x,v)\in \Sigma^+,~f_0\left(x,v\right) = \int_{\mathbb{S}^2} f_0\left(x,\omega |v|\right)c_3\left(\Gamma(x) \omega\cdot\e\right)_-d\omega,
	\end{equation}
	$T^0_1 f_0$ is continuous, and then $Q_{1,r}^0f_0$ is always continuous. Finally we get:
	\begin{prop}
		Let $f_0$ be a continuous function on $\Lambda\times\mathbb{R}^3$, bounded by a Gaussian distribution and satisfying the condition \eqref{compability condition}. Then the solution $f(t)$ of the Boltzmann equation with diffusion in angle is continuous.
	\end{prop}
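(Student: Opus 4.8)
The plan is to read off $f(t)$ from the Duhamel series $f(t)=\sum_{r\ge 0}Q^0_{1,1+r}(t)f_0^{\otimes(1+r)}$, whose convergence on a short interval $[0,T]$ in a weighted space $X_{\beta',\mu'}$ is provided by Theorem \ref{existence de la hierarchie de Boltzmann}. Because the Gaussian weights give geometric control of the successive terms, this series converges normally, hence uniformly on every compact subset of $\Lambda\times\mathbb{R}^3$; since a uniform limit of continuous functions is continuous, I would reduce the whole statement to proving that \emph{each} term $Q^0_{1,1+r}(t)f_0^{\otimes(1+r)}$ is continuous on $\Lambda\times\mathbb{R}^3$.

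The decisive input is the exact role of the compatibility condition \eqref{compability condition}. Evaluating the trace formula \eqref{Série pour la trace} at $t=0$, every term $B^i g_b$ with $i\ge 1$ vanishes (it would require a strictly positive travel time, and $g_b$ is extended by $0$ for negative times), so the trace reduces to $g_b(0,x,v)=\int_{\mathbb{S}^2}f_0(x,|v|\omega)c_3(\Gamma(x)\omega\cdot\e)_-\,d\omega$. Continuity of $T^0_1f_0$ across its initial boundary trace is therefore equivalent to $f_0(x,v)=g_b(0,x,v)$ for $(x,v)\in\Sigma^+$, which is precisely \eqref{compability condition}. Thus, under the compatibility assumption, $T^0_1 f_0$, which in general is only continuous outside the future set $\mathcal{F}(0)$, becomes continuous on all of $\mathbb{R}^+\times\Lambda\times\mathbb{R}^3$: the $\mathcal{F}(0)$ jump is cancelled.

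I would then rerun the inductive continuity analysis of Section \ref{Well-posedness of the operators} with this improvement. In the composition $T^0_1(t-t_1)C^0_{1,2}\cdots C^0_{r,r+1}T^0_{1+r}(t_r)f_0^{\otimes(1+r)}$, the innermost transport starts at time $0$ from the compatible datum $f_0$, so by the previous paragraph it introduces no $\mathcal{F}(0)$ discontinuity; each later transport restarts at a time $t_i>0$ and can only create a discontinuity along $\mathcal{F}(t_i)$, while the collision operators $C^0$ act locally in $(t,x)$ and, by the bound \eqref{continuité de C1,2}, preserve continuity together with the Gaussian domination. Consequently the integrand is continuous away from $\bigcup_{i=1}^r\mathcal{F}(t_i)$, with no $\mathcal{F}(0)$ contribution at all. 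For fixed $(t,x,v)$ the excluded set corresponds to finitely many values of each $t_i$ (the reflection count of Lemma \ref{estimation des reflections} bounding the number of boundary hits), hence is of measure zero in the simplex, and dominated convergence yields that $Q^0_{1,1+r}(t)f_0^{\otimes(1+r)}$ is continuous everywhere, the boundary included.

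The main obstacle I anticipate is the bookkeeping in this third step: one must check carefully that the collision operators never reintroduce a mismatch at the initial time, and that the only discontinuity manifolds generated downstream are the $\mathcal{F}(t_i)$ with $t_i>0$, so that $\mathcal{F}(0)$ genuinely disappears from every term before integration. Once this is secured, combining the term-by-term continuity with the uniform convergence of the series established in the first paragraph gives the continuity of $f(t)$.
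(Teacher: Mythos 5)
Your proposal is correct and follows essentially the same route as the paper: reduce to term-by-term continuity of $Q^0_{1,1+r}(t)f_0^{\otimes(1+r)}$ via the weighted-norm series estimates, observe that the compatibility condition \eqref{compability condition} is exactly the matching of $f_0$ with its boundary trace $g_b(0,\cdot,\cdot)$ obtained from \eqref{Série pour la trace} at $t=0$ (so the $\mathcal{F}(0)$ discontinuity of $T^0_1 f_0$ disappears), and then rerun the iteration of Section \ref{Well-posedness of the operators}, which now only produces discontinuities on the $\mathcal{F}(t_i)$ with $t_i>0$, a set of measure zero in the time-integration variables. You in fact supply more detail than the paper, whose proof of this proposition is the one-sentence remark preceding it, resting on the same machinery you invoke.
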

	\end{remark}
	\subsection{Continuity estimates}
	\begin{prop}
		There is a constant $C$ independent of $\beta$, $\mu$ and $r\in\mathbb{N}$ such that
		\begin{equation}\label{estimation de Q^0}
		\left\Vert Q_{s,s+r}^0(t)f^{s+r}_0\right\Vert_{3\beta/4,\mu-1}\leq \left(C \beta^{-2}e^{-\mu} t\right)^r \|f_{s+r}^0\|_{\beta,\mu},
		\end{equation}
		\begin{equation}\label{estimation de Q^epsilon}
		\left\Vert Q_{s,s+r}^\epsilon(t)f^{(s+r)}_{0,N}\right\Vert_{3\beta/4,\mu-1}\leq \left(C \beta^{-2}e^{-\mu}t\right)^r \|f^{(s)}_{0,N}\|_{\beta,\mu}.
		\end{equation}
	\end{prop}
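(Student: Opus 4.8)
The plan is to reduce this estimate to the standard Lanford--GST continuity bound, isolating the only genuinely new feature, namely the diffuse reflection. The first step is to observe that both transport semigroups $T^0_s(t)$ and $T^\epsilon_s(t)$ act as contractions for the norm $\|\cdot\|_{\beta,\mu}$. Indeed, the backward flow $\phi^{\epsilon,-t}_s$ conserves the total kinetic energy $\|\gr{v}_s\|^2$: free transport preserves each velocity, the elastic collisions among the $s$ particles preserve $\sum_i|v_i|^2$, and a diffuse reflection preserves the modulus $|v_i|$ of the reflected particle (this is exactly the conservation exploited in Lemma \ref{estimation des reflections}). Hence the Gaussian weight $\exp(\tfrac{\beta}{2}\|\gr{v}_s\|^2)$ is invariant along trajectories; averaging over the reflection variables against the probability $\mathbb{P}^s$ gives $\|T^0_s(t)g\|_{\beta,\mu}\le\|g\|_{\beta,\mu}$, and likewise for $T^\epsilon_s$. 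Thus the reflections are \emph{inert} for the continuity estimate, and the computation becomes structurally identical to the boundaryless case; well-posedness of the composition $C^0_{s,s+1}T^0_{s+1}(t)$ has already been secured in Section \ref{Well-posedness of the operators}, so here only the quantitative bound remains.

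Next I would establish the one-collision estimate. Writing $C^0_{s,s+1}=C^{0,+}_{s,s+1}-C^{0,-}_{s,s+1}$, I bound each term of the sum over the colliding particle $i\in\{1,\dots,s\}$. The decisive point is that the scattering $(v_i',v_{s+1}')\mapsto(v_i,v_{s+1})$ conserves energy, so the Gaussian weight carried by $f^{s+1}$ equals $\exp(-\tfrac{\beta}{2}(\|\gr{v}_s\|^2+|v_{s+1}|^2))$ whether read on pre- or post-collisional velocities. Integrating the cross-section $(\nu\cdot(v_{s+1}-v_i))_\pm$ over $\nu\in\mathbb{S}^2$ produces $\pi|v_{s+1}-v_i|$, and the Gaussian integral in $v_{s+1}$ yields $\int|v_{s+1}-v_i|e^{-\frac{\beta}{2}|v_{s+1}|^2}dv_{s+1}\le C\beta^{-3/2}(\beta^{-1/2}+|v_i|)$, exactly as in \eqref{continuité de C1,2}. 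The factor $|v_i|$ is absorbed into a small fixed fraction of the target Gaussian (the margin $\sim\beta/8$ left by the exponent $3\beta/4<\beta$), at cost $C\beta^{-1/2}$, producing the gain $\beta^{-2}$; reducing the $\mu$-count by one particle turns $e^{-\mu(s+1)}$ into the target $e^{(\mu-1)s}$ up to $e^{-\mu}e^{-s}$; and the sum over $i$ contributes the factor $s$.

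I would then iterate over the $r$ collision operators along each tree $a\in\mathfrak{A}^{s+r}_s$, integrate over the simplex $\{t>t_1>\dots>t_r>0\}$ (volume $t^r/r!$), and sum over $a$ and $\gr{\sigma}\in\{\pm1\}^r$. This collects the tree count $s(s+1)\cdots(s+r-1)=\frac{(s+r-1)!}{(s-1)!}$, the geometric factor $(C\beta^{-2}e^{-\mu})^r$, and the bookkeeping factor $e^{-s}$. Since $\frac{1}{r!}\cdot\frac{(s+r-1)!}{(s-1)!}=\binom{s+r-1}{r}\le 2^{s+r-1}$ and $2^{s-1}e^{-s}\le 1$, the time factorial exactly absorbs the combinatorial tree count, leaving $(C\beta^{-2}e^{-\mu}t)^r$ with $C$ independent of $\beta,\mu,r$ (and, in fact, of $s$).

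The delicate step, which I expect to require the most care, is the uniform-in-$r$ control of the velocity growth: absorbing the accumulated factors $|v_{a(k)}|$ coming from the $r$ cross-sections while losing only the fixed total amount $\beta\to 3\beta/4$ of Gaussian decay. Charging a fraction $\beta/(4r)$ of the weight at each collision would produce a spurious factor $r^{r/2}$; the correct bookkeeping exploits the correlation between the tree structure and the velocity moments, so that after the $1/r!$ has cancelled the tree count the surviving moment sum is bounded by $C^r$ uniformly in $r$. Trees concentrating many collisions on one particle generate high moments but are combinatorially rare, while the abundant ``spread'' trees generate only $O(1)$ moments; quantitatively one controls $\sum_{d}\frac{(d/\beta)^{d/2}}{d!}$, which converges. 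This is precisely the Lanford--GST mechanism, imported once the reflection is known to be inert. Finally, estimate \eqref{estimation de Q^epsilon} for $Q^\epsilon_{s,s+r}$ follows from the same argument: the microscopic shift $x_i+\epsilon\nu$ in $C^\epsilon_{s,s+1}$ and $T^\epsilon_s$ affects neither the velocity integrals nor the Gaussian weights, and the elastic collisions of the $\epsilon$-dynamics conserve energy just as the reflections do, so every bound holds uniformly in $\epsilon$.
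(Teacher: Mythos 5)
Your opening steps are fine and coincide with the paper's: the transport semigroups (reflections included) leave the Gaussian weight invariant because diffuse reflection preserves $|v_i|$, and the one--collision bound is exactly \eqref{continuité de C1,2}. The gap is in the step you yourself flag as delicate, and your account of it is wrong in a way that matters. You assert that charging a fraction $\beta/(4r)$ of the weight at each collision ``would produce a spurious factor $r^{r/2}$'' and must be replaced by a degree-based tree count. But charging $\beta/(4r)$ per collision is exactly what the paper does: it takes a decreasing sequence $\beta=\beta_r>\dots>\beta_0=3\beta/4$ with $\beta_{i+1}-\beta_i=\beta/4r$, and the blow-up is avoided not by tree combinatorics but by a Cauchy--Schwarz inequality \emph{over the particles}: at the step with $s+i$ particles one writes $\sum_{j}|v_j|\le (s+i)^{1/2}\|\gr{v}_{s+i}\|$ and absorbs the \emph{single} factor $\|\gr{v}_{s+i}\|$ against $e^{-\frac{\beta}{8r}\|\gr{v}_{s+i}\|^2}$, at cost $(s+i)^{1/2}(4r/\beta)^{1/2}\le 2(s+r)\beta^{-1/2}$. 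The per-step factor is thus $C\beta^{-2}(s+r)$, and $(s+r)^r/r!\le e^{s+r}$ closes the argument, the $e^{s}$ being precisely the budget given by the shift $\mu\mapsto\mu-1$. In other words, the $r^{1/2}$ per step is compensated because Cauchy--Schwarz produces $(s+i)^{1/2}$ instead of $(s+i)$ on the moment term, not because ``concentrated trees are combinatorially rare''.

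Your substitute mechanism does not work as sketched. For the moments to factorize over particles as $\prod_j |v_j|^{d_j}$, with $d_j$ the number of creations attached to particle $j$ --- which is what the convergent sum $\sum_{d}(d/\beta)^{d/2}/d!$ presupposes --- you would need the velocity of particle $a(k)$ at the collision time $t_k$ to be controlled by that particle's own Gaussian weight at time $0$. It is not: the scatterings redistribute kinetic energy along the tree, so $|v_{a(k)}(t_k^+)|$ is bounded only by the \emph{total} energy $\|\gr{v}\|$, and deferring the absorption to time $0$ brings you back to $\|\gr{v}\|^{r}e^{-\frac{\beta}{8}\|\gr{v}\|^2}\sim (Cr/\beta)^{r/2}$, i.e.\ the very factor you were trying to avoid. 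Moreover, even formally, your bookkeeping produces a \emph{per-particle} constant $C_\beta\geq 1+c\beta^{-1/2}$, hence a factor $C_\beta^{\,s}$, which both exceeds the $e^{s}$ budget allowed by the shift $\mu\mapsto\mu-1$ and makes the constant in \eqref{estimation de Q^0} depend on $\beta$, contrary to the statement. So the core estimate --- which is the entire content of the proposition --- is not actually proved: the mechanism you appeal to is not the one used by Lanford, GST, or this paper, and the one you describe has a genuine obstruction.
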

	\begin{proof}
	First note that $\exp\left({-\beta\|\gr{v}_s\|^2/2}\right)$ is preserved by $T_s^0(t)$ for all $\beta>0$. Using the bound \eqref{continuité de C1,2}, if $f^{s+1}_0$ is dominated by $\exp\left(-\beta\|\gr{v}_{s+1}\|^2/2\right)$,
	\[\begin{split}
	\big\vert &C_{s,s+1}T_{s+1}(t)f^{s+1}_0(\gr{z}_{s+1})\big\vert\\
	&\leq C   \beta^{-3/2}\left(s\beta^{-1/2}+\sum_{i=1}^s |v_i|\right)e^{ -\frac{\beta}{2}\|\gr{v}_s\|^2}\\
	&\leq C   \beta^{-3/2}\left(s\beta^{-1/2}+\left(\sum_{i=1}^s |v_i|\right)e^{-(\beta-\beta')\|\gr{v}_s\|^2}\right)e^{ -\frac{\beta'}{2}\|\gr{v}_s\|^2}\\
	&\leq C   \beta^{-3/2}\left(s\beta^{-1/2}+ s^{1/2}(\beta-\beta')^{-1/2}\left(\sum_{i=1}^s (\beta-\beta')|v_i|^2\right)^{1/2}e^{-\frac{\beta-\beta'}{2}\sum_{i=1}^s|v_i|^2}\right) e^{-\frac{\beta'}{2}\|\gr{v}_s\|^2}\\
	&\leq C \beta^{3/2}\left(s\beta^{-1/2}+ s^{1/2}(\beta-\beta')^{-1/2}\right) e^{ -\frac{\beta'}{2}\|\gr{v}_s\|^2}
	\end{split}\]
	using the Cauchy-Schwartz inequality between lines 3 and 4. Then iterating this bound and integrating on $t>t_1>\cdots>t_r>0$, we get that for $\|(f_0^s)\|_{\beta,\mu}\leq 1 $
	\[\left\vert Q^0_{s,s+r}(t)f_0^{s+r}(\gr{z}_{s})\right\vert
		\leq \frac{t^r}{r!}\prod_{i=0}^{r-1} C \beta_{i+1}^{-3/2}\left( (s+i)\beta_{i+1}^{-1/2} +(s+i)^{1/2}(\beta_{i+1}-\beta_i)^{-1/2}\right) e^{ -\frac{3\beta}{8}\|\gr{v}_s\|^2-\mu (r+s)}\]
	for $\beta = \beta_r>\cdots>\beta_0 = 3\beta/4$. For $\beta_{i+1}-\beta_i = \beta/4r$, we get :
	\[\begin{split}
	\left\vert Q^0_{s,s+r}(t)f_0^{s+r}(\gr{z}_{s})\right\vert
	&\leq \frac{t^r(s+r)^{r}}{r!}\left(C\beta^{-2}\right)^re^{ -\frac{3\beta}{8}\|\gr{v}_s\|^2-\mu (r+s)}\\
	&\leq t^r e^{r+s}\left(C\beta^{-2}\right)^re^{ -\frac{3\beta}{8}\|\gr{v}_s\|^2-s\mu}e^{-\mu r}
	\end{split}\] 
	using the Stirling's formula. This is the expected estimate. 
	
	The same method works for $Q^\epsilon_{s,s+r}(t)$.
	\end{proof}
	We can sum these bounds and get
	\[\left\Vert f^{s}(t)\right\Vert_{3\beta/4,\mu-1}\leq \frac{1}{1-C \beta^{-2}e^{-\mu}t} \|(f^{s}_0)\|_{\beta,\mu}\]
	which concludes the proof of Theorem \ref{existence de la hierarchie de Boltzmann}.\qed

	\section{Main theorem}\label{Main theorem and strategy of the proof}
	The main theorem of the paper is a weak convergence result. Indeed we will only look at the convergence of  \emph{observables} of the system, \textit{i.e.} averaging with respect to the momentum variable. In addition, the marginals of the hard sphere system are only well defined in $\mathcal{D}^s_\epsilon$, so convergence will occur only away from the diagonal of the physical space. To summarize all these conditions, we define the following notion of convergence:
	\begin{definition}\label{definition de la convergence}
		A sequence $(f_N^s)_{1\leq s \leq N}$ converges to a sequence $(f^s)_s$ in average and locally uniformly off the diagonal if for all $\varphi_s : \mathbb{R}^3\rightarrow \mathbb{R}$, continuous with polynomial growth at infinity, and for all compact sets  $K\subset \Lambda^s$, away from the diagonal set $\mathfrak{D}^s :=\left\lbrace \gr{x_s} \in \Lambda^s|\exists i,j,~ x_i=x_j\right\rbrace$
		\begin{equation}
			I_{\varphi_s}(f_N^s-f^s)(\gr{x}_s) := \int_{\left(\mathbb{R}^3\right)^s} \left(f_N^s-f^s\right)(\gr{z}_s) \varphi_s(\gr{v}_s)d\gr{v}_s \rightarrow 0 \mathrm{~in~}L^\infty(K).
		\end{equation}
	\end{definition}
	
	\begin{theorem}\label{theorem de Landford}
		Let $\beta>0$ and $\mu$ be two constants. Then there exists a time $T$ such that the following holds. Let $(f_0^s)_s$ be an element of $\tilde{X}_{\beta,\mu}$ and $(W_{0,N})_N$ a sequence of symmetric functions in $L^1\cap L^\infty(\mathcal{D}^N_{1/{\sqrt{N}}})$ with $(f_{0,N}^{(s)})_{1\leq s\leq N}$ their marginals. Then if $(f_{0,N}^{(s)})$ converge to $(f_0^s)$ in norm $\|~\|_{\beta,\mu}$, the sequence $(f_N^{(s)}(t))_s$ solution to the BBGKY hierarchy converges in average locally uniformly to $(f^s(t))_s$ the solution of the Boltzmann hierarchy with initial data $(f^s_0)$, $\forall t<T$.
	\end{theorem}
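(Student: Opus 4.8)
The plan is to run Lanford's scheme adapted to the stochastic boundary, comparing the BBGKY series \eqref{BBGKY Duhamel serie} with the Boltzmann series term by term. The first step is a reduction to finitely many terms. By the uniform estimates \eqref{estimation de Q^epsilon} and \eqref{estimation de Q^0}, both series are dominated by the geometric series $\sum_r (C\beta^{-2}e^{-\mu}t)^r$; choosing $T$ so that $C\beta^{-2}e^{-\mu}T < 1/2$, the tails $\sum_{r>R}$ are bounded by $2^{-R}\|(f_0^s)\|_{\beta,\mu}$ uniformly in $N$ and $\epsilon$. Hence, for any prescribed accuracy it suffices to fix $R$ large and prove, for each $r\le R$, that the individual term $\alpha(N-s,r)\epsilon^{2r}Q_{s,s+r}^\epsilon(t)f_{0,N}^{(s+r)}$ converges to $Q_{s,s+r}^0(t)f_0^{s+r}$ in the sense of Definition \ref{definition de la convergence}.

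For a fixed $r$, three reductions are routine. The combinatorial weight satisfies $\alpha(N-s,r)\epsilon^{2r}=\prod_{j=0}^{r-1}\bigl(1-(s+j)\epsilon^2\bigr)\to 1$ under $N\epsilon^2=1$. The convergence $f_{0,N}^{(s+r)}\to f_0^{s+r}$ in $\|\cdot\|_{\beta,\mu}$, together with the uniform bound \eqref{estimation de Q^epsilon}, lets me replace the BBGKY initial datum by the Boltzmann one up to an error controlled by $\|f_{0,N}^{(s+r)}-f_0^{s+r}\|_{\beta,\mu}$. It then remains to compare the two transport-collision operators $Q_{s,s+r}^\epsilon(t)$ and $Q_{s,s+r}^0(t)$ applied to the same continuous, Gaussian-dominated datum $f_0^{s+r}$.

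The heart of the argument, carried out in Section \ref{Term by term convergence}, is the pseudotrajectory comparison. I would parametrize the interacting pseudotrajectory $\zeta^\epsilon$ and the punctual pseudotrajectory $\zeta^0$ by the \emph{same} creation data $\gr{(t,\nu,\bar{v})}_{s+1,s+r}$, signs $\gr{\sigma}$, tree $a$ and reflection angles $\gr{\bar{\omega}}_{s+r}$, and isolate a \emph{good set} of parameters on which both are defined and carry identical velocity histories, the configurations differing only by the spatial shifts of size $\epsilon$ introduced at each creation. On this good set the integrands converge pointwise as $\epsilon\to0$, and the Gaussian domination coming from $\tilde{X}_{\beta,\mu}$ together with the bound \eqref{continuité de C1,2} supplies the dominating function, so dominated convergence yields the limit; the averaging against $\varphi_s$ on a compact $K$ away from $\mathfrak{D}^s$ is precisely what absorbs the discontinuities of $T_1^0$ along the future set $\mathcal{F}$ and keeps the creation points off the diagonal.

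The main obstacle is the estimate of the complement, the \emph{bad set}. It collects grazing collisions and reflections, overlaps, and above all \emph{recollisions}: two pseudoparticles that, after a creation and possibly one or more boundary reflections, return to distance $\epsilon$. I would bound its measure by a positive power of $\epsilon$, and this is where the restriction to $\Lambda=[0,1]\times\mathbb{T}^2$ is used: between reflections the motion is rectilinear and, by Lemma \ref{estimation des reflections}, each velocity undergoes at most $t|v|$ reflections, so the trajectory is a bounded concatenation of straight segments. For each segment the set of creation parameters leading to a subsequent approach within $\epsilon$ lies in a tube of width $O(\epsilon)$, and the reflection law, being an explicit rotation of the velocity direction with a smooth density, does not concentrate this measure; summing over the finitely many segments and the finitely many trees gives a bound $C(R,s,t)\epsilon$. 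Inserting this back, letting $\epsilon\to0$ at fixed $R$ and then $R\to\infty$ closes the argument. The delicate point to watch is that an $\epsilon$-shift accumulated at a collision can be amplified by a near-grazing reflection, so the grazing and the proximity-to-reflection-time thresholds must be coupled to $\epsilon$; this coupling, transparent in the box, is exactly what a general domain $\Lambda\subset\mathbb{R}^3$ would render far more delicate.
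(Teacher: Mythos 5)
Your overall scheme is indeed the paper's own: truncation of the Duhamel series in the number $r$ of creations, replacement of $f_{0,N}^{(s+r)}$ by $f_0^{s+r}$ via the continuity estimates, and then a term-by-term comparison of hard-sphere and punctual pseudotrajectories driven by the same creation data, with a good set on which the velocity histories coincide and only $O(\epsilon)$ position shifts remain. However, the step you compress into ``a tube of width $O(\epsilon)$\ldots summing gives a bound $C(R,s,t)\epsilon$'' is precisely where the proof lives, and as stated it would fail. The tube count is the torus argument; the diffusive boundary breaks it in two distinct ways. (i) When a recollision happens after a reflection, the constraint acts on the reflected direction $\omega$, and the set of bad directions has size of order $E^{1/2}\left(\epsilon^{3/4}/d\right)^{1/2}$ (estimate \eqref{estimation des directions de recollision}), where $d$ is the inter-particle distance at the reflection time. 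This degenerates when $d$ is small or the motion is grazing, which is why the paper must introduce a hierarchy of $\epsilon$-dependent cutoffs ($|v\cdot\e|>\epsilon^{1/4}$, distance to the boundary $>\epsilon^{1/3}$, $d>\epsilon^{1/2}$), pass to the covering $[0,1]\times\mathbb{R}^2$ with the virtual particle $\tj$, and in the end obtains only a fractional power, $\epsilon^{1/12}$ --- not $\epsilon$. Your closing remark that the thresholds ``must be coupled to $\epsilon$'' names the right phenomenon, but that coupling \emph{is} the proof, not a detail to be watched.

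(ii) More importantly, the dominant bad event in this model is not a recollision at all and has no tube structure: it is the set the paper calls $\mathcal{P}_1$, the mismatch of the recorded reflection sequences. Since hard-sphere positions are shifted by $O(\epsilon)$, each reflection time is shifted by $\delta\tau$ of order $\epsilon/|v\cdot\e|$; if a creation time $t_{i'}$ falls inside such a window, then $\bar{\gr{\omega}}^\epsilon$ and $\bar{\gr{\omega}}^0$, hence the entire subsequent velocity histories, diverge no matter how small $\epsilon$ is. Estimating $|\mathcal{P}_1|$ requires summing these windows over the at most $E^{1/2}T$ reflections and integrating the resulting singular factors $1/|v\cdot\e|$ and $1/|v\cdot\e|^2$ against the scattering measure --- this is exactly what the appendix bounds \eqref{intégration de 1/v} and \eqref{intégration de 1/v^2} are for, and it produces bounds like $\left(\epsilon+|x^0-x^\epsilon|\right)^{1/2}C(E,T)$, again far from $O(\epsilon)$. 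Finally, note that you never cut high energies (the paper's $\mathcal{R}_3$): every estimate above, as well as the uniform continuity of $f_0^{s+r}$ needed to convert the $O(\epsilon|\log\epsilon|^r)$ shift bound into convergence that is uniform on $[0,T]\times K$, presupposes the velocity cutoff $E$; Gaussian domination and dominated convergence alone give only pointwise convergence, which is weaker than what Definition \ref{definition de la convergence} demands.
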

	\begin{remark}
		Note that the convergence of the observables is uniform for $s=1$.
	\end{remark}
	\begin{proof}
	As for the interacting case, we  rewrite $Q^0_{s,s+r}(t)$ with a characteristic formula. We construct the \emph{pseudotrajectory} $\zeta^0( \tau, t,\gr{z}_s, \gr{(t,\nu,\bar{v})}_{s+1,s+r}, \gr{\bar{\omega}}_{r+s} ,a,\gr{\sigma})$ for $\tau\in[0,t]$, $\gr{(t,\nu,\bar{v})}_{s+1,s+r} := (t_i,\nu_i,\bar{v}_i)_{s<i\leq s+r}\in (\mathbb{R}\times\mathbb{S}^2\times\mathbb{R}^3)^r$  with $t>t_{s+1}\cdots>t_{s+r}>0$, $\gr{\bar{\omega}}_{r+s}=(\bar{\omega}_1,\cdots,\bar{\omega}_{r+s})\in\Omega^{r+s}$, $a\in\mathfrak{A}_s^{s+r}$ and $\gr{\sigma} \in \{\pm1\}^r$. The number of particles of $\zeta^ 0(\tau)$ is not constant: for $\tau$ between $t_k$ and $t_{k+1}$ there are $s+k$ particles (by convention, $t_s:=t$ and $t_{r+s+1}=0$). We define $(\zeta^0(\tau),\gr{\bar{\omega}}^0_{r+s}(\tau)):=(x^0_1(\tau),v_1^0(\tau),\cdots,x^0_{s+r(\tau)}(\tau),v_{s+r(\tau)}^0(\tau),\gr{\bar{\omega}}^0_{r+s}(\tau))$ by
	\begin{itemize}
		\item $(\zeta^{ 0}(t),\gr{\bar{\omega}}_{r+s}^ 0(t)) := (\gr{z}_s,\gr{\bar{\omega}}_{r+s})$
		\item  For $\tau \in(t_{k+1},t_k)$, $\gr{\bar{\omega}}^ 0_{k+1,r+s}(\tau)$ is constant and
		\[(\zeta_{1,k}^ 0(\tau),\gr{\bar{\omega}}^0_{1,k}(\tau))=\phi^{ 0,(\tau-t_k)}_{k}\left(\zeta_{1,k}^ 0(t_k^+), \gr{\bar{\omega}}^ 0_{1,k}(t_k^+)\right)\]
		where $\phi^{ 0,(\tau-t_k)}_{k}$ is the flow of punctual particles (particles do not see each other).
		\item At time $t_k^+$, a particle is added at the position  $x^0_{k}=x^0_{a(k)}(t_k)$. If $\sigma_k =1$, we will have $\nu_k\cdot(\bar{v}_{k}-v^0_{a(k)}(t_k^+))>0$ and the velocities $(v^0_{a(k)}(t_k^-),{v}^0_{k}(t_k^-))$ are given by the usual scattering
		\[\left\lbrace\begin{split}
		&v^0_{a(k)}(t_k^-) = v^0_{a(k)}(t^+_k) - \nu_k\cdot\left(v^0_{a(k)}(t^+_k)-\bar{v}_{k}\right)\nu_k\\
		&v_{k}^0(t_k^-)= \bar{v}_{k}+\nu_k\cdot\left(v_{a(k)}^0(t^+_k)-\bar{v}_{k}\right)\nu_k~.
		\end{split}\right. \] 
		Else if $\sigma_k =-1$, $\nu_k\cdot(\bar{v}_{k}-v_{a(k)}^0(t_k^+))<0$ and we will have no scattering. The velocities are at time $t_k^-$ just $(v_{a_{k}}^0(t_k^+),\bar{v}_{k})$.
	\end{itemize}
	
	We denote $\mathcal{G}^0(\gr{z}_s,t,a,\gr{\sigma})$ the set of coordinates, \textit{i.e.} the $(\gr{(t,\nu,\bar{v})}_{s+1,s+r},\gr{\bar{\omega}}_{r+s})$ such that the pseudo-trajectory follows the collision parameters $(a,\gr{\sigma})$, and 
	\[d\Lambda^0_{a,\gr{\sigma}}(\gr{(t,\nu,\bar{v})}_{s+1,s+r},\gr{\bar{\omega}}_{s+r}) = \left(\prod_{k=s+1}^{s+r}\left[\sigma_k\nu_k\cdot\left(\bar{v}_k-v^0_{a(k)}\left(t_k^+\right)\right)\right]_+
	dt_k d\bar{v}_{k}d\nu_k\right) d\mathbb{P}^{s+r}(\bar{\gr{\omega}}_{r+s}).\]
	
	Then we can rewrite $Q^0_{s,s+r}(t)f^{s+r}(\gr{z}_s)$ as
	\begin{equation}
	Q_{s,s+r}^0(t)f^{s+r}(\gr{z}_s) := \sum_{a,\gr{\sigma}} \sigma_{s+1}\cdots\sigma_{s+r}\int_{\mathcal{G}^0(\gr{z}_s,t,a,\gr{\sigma}_r)} d\Lambda^0_{a,\gr{\sigma}_r}(\gr{(t,\nu,\bar{v})}_{s+1,s+r},\gr{\bar{\omega}}_{s+r}) f_{0}^{s+r}(\zeta^0(0)).
	\end{equation}
	
	We shall prove the one-to-one convergence 
	\[I_{\varphi_s}\left(\int_{\mathcal{G}^\epsilon(\gr{z}_s,t,a,\gr{\sigma})} d\Lambda_{a,\gr{\sigma}}^\epsilon f_{0,N}^{(s+r)}(\zeta^\epsilon(0))\right)\rightarrow I_{\varphi_s}\left(\int_{\mathcal{G}^0(\gr{z}_s,t,a,\gr{\sigma})} d\Lambda_{a,\gr{\sigma}}^0 f_{0}^{s+r}(\zeta^0(0))\right)\]
	at all points $(t,\gr{x}_s)$ and all collision parameters $(a,\gr{\sigma}_r)$, using that the pseudo trajectories $\zeta^\epsilon$ and $\zeta^0$ are in average not so far. There are two reasons producing a big error between the two trajectories. The first one is the \emph{recollisions} (collision of two particles that have already been created) that can occur for the interacting process but not for the punctual particles. The second comes from the difference between $\bar{\gr{\omega}}^\epsilon(0)$ and $\bar{\gr{\omega}}^0(0)$: because there is a shift between $\zeta^0(\tau)$ and $\zeta^\epsilon(\tau)$, the times of reflection are not exactly the same. Thus there exists a set of times during which $\omega^\epsilon(\tau)$ is different from $\omega^0(\tau)$. If $0$ is in this set the final velocity will be very different.
	
	To make the convergence term by term work, we have to perform some truncation. First we note that the continuity bound gives
	\begin{equation}
	\mathcal{R}_1:=\left\vert \sum_{r=0}^{N-s} Q_{s,s+r}^\epsilon (f_{0,N}^{(s+r)}-f_0^{s+r})\right\vert\leq C_{\beta,\mu} e^{-\frac{3\beta}{8}\|\gr{v}_s\|^2 -(\mu-1)s}\left\Vert\left(f_{0,N}^{(s+r)}-f_0^{s+r}\right)_{1\leq s\leq N} \right\Vert_{\beta,\mu}
	\end{equation}
	and we have only to look at the semigroups applied to $(f_0^s)_s$.
	
	Then we have to bound the number of creations of particles by an integer $R$ that depends on $N$. The rest will be bounded by :
	\begin{multline}\label{borne sur le nombre de création}
	\mathcal{R}_2:=\left\vert \sum_{r> R} Q_{s,s+r}^0(t) f^{s+r}_0\right\vert \leq  \sum_{r> R} (C_{\beta,\mu}t)^r e^{-\frac{3\beta}{8}\|\gr{v}_s\|^2 -(\mu-1)s}\|(f_0^s)_s\|_{\beta,\mu}\\
	\leq C2^{-R}e^{-\frac{3\beta}{8}\|\gr{v}_s\|^2 -(\mu-1)s}\|(f_0^s)_s\|_{\beta,\mu}
	\end{multline}
	for $t\leq 1/(2C_{\beta,\mu})=:T$. The same estimates hold for the BBGKY hierarchy. 
	
	Next we have to cut the high energy. Because $(f^s_0)_s$ is bounded in $X_{\beta,\mu}$, $(f^s_0\ind_{\|\gr{v}_i\|^2>E})_s$ is bounded in $X_{7\beta/8,\mu}$ by $e^{-7\beta E/16}\|(f_0^s)_s\|_{\beta,\mu}$. Thus we have the bound (the same bound holds for BBGKY)
	\begin{equation}\label{borne sur les grandes énergies}
	\mathcal{R}_3:= \left\vert \sum_{r\geq 0} Q^0_{s,s+r}(t) \left(f_0^{s+r}\ind_{\|\gr{v}_{s+r}\|^2>E}\right)\right\vert\leq Ce^{-\beta E/16} e^{-\frac{3\beta}{8}\|\gr{v}_s\|^2 -(\mu-1)s}\|(f_0^s)_s\|_{\beta,\mu}.
	\end{equation}
	
	Thus for any $\eta>0$ we can fix $R$, $E$ and $N_0$ such that for $N>N_0$, 
	\[\left\Vert\mathcal{R}_1+\mathcal{R}_2+\mathcal{R}_3\right\Vert_{3\beta/4,\mu-1}\leq \eta.\]
	From this  $\mathcal{R}_1+\mathcal{R}_2+\mathcal{R}_3$ converges to zero in the sense of Definition \ref{definition de la convergence} when $R$, $E$ and $N_0$ tend to infinity, uniformly in $\epsilon$.
	
	Therefore the proof is reduced to the proof of the following proposition: 
	\begin{prop}\label{Prop de convergence term a term}
		For all compact sets $K\subset \Lambda^s\setminus \mathfrak{D}_s$, all collision parameters $(a,\gr\sigma)$ and all test functions $\varphi_s$, 
		\begin{equation}\label{convergence term à term}
		\int_{\mathbb{G}^\epsilon(t,\gr{x}_s,a,\gr{\sigma},E)} d\gr{v}_sd\Lambda_{a,\gr\sigma}^\epsilon f_{0}^{s+r}(\zeta^\epsilon(0))\varphi_s(\gr{v}_s) \rightarrow \int_{\mathbb{G}^0(t,\gr{x}_s,a,\gr{\sigma},E)} d\gr{v}_sd\Lambda_{a,\gr\sigma}^0 f_{0}^{s+r}(\zeta^0(0))\varphi_s(\gr{v}_s)
		\end{equation}
		uniformly on $[0,T]\times K$, where $\mathbb{G}^\epsilon(t,\gr{x}_s,a,\gr{\sigma},E)$ is the set of parameters 
		\begin{multline*}\Big\lbrace (\gr{v}_s,\gr{(t,\nu,\bar{v})}_{s+1,s+r},\gr{\bar{\omega}}_{s+r}),~\gr{v_s}\in\mathbb{R}^{3s},~(\gr{(t,\nu,\bar{v})}_{s+1,s+r},\gr{\bar{\omega}}_{s+r})\in\mathcal{G}^\epsilon(t,\gr{x}_s,\gr{v}_s,a,\gr{\sigma}),\\
		\|\gr{v}_s\|^2+\|\gr{\bar{v}}_{s+1,s+r}\|^2\leq E\Big\rbrace,\end{multline*}
		and same definition for $\mathbb{G}^0(t,\gr{x}_s,a,\gr{\sigma},E)$.
	\end{prop}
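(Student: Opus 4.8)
The plan is to couple the two pseudo-trajectories $\zeta^\epsilon$ and $\zeta^0$ by running them with the very same parameters $(\gr{v}_s,\gr{(t,\nu,\bar v)}_{s+1,s+r},\gr{\bar{\omega}}_{s+r})$ and to show that, outside a set of parameters of vanishing measure, they carry identical velocities at every time while their positions stay within $O(r\epsilon)$ of one another. Granting this, I would write the difference of the two integrals as the sum of three contributions: the integral over the \emph{bad} parameter set (to be shown negligible); the mismatch between the admissible domains $\mathcal{G}^\epsilon$ and $\mathcal{G}^0$ and between the weights $d\Lambda^\epsilon_{a,\gr\sigma}$ and $d\Lambda^0_{a,\gr\sigma}$, which vanishes on the good set since there the base velocities satisfy $v^\epsilon_{a(k)}(t_k^+)=v^0_{a(k)}(t_k^+)$ and hence the cross-section factors $[\sigma_k\nu_k\cdot(\bar v_k-v_{a(k)}(t_k^+))]_+$ agree; and the difference $f_0^{s+r}(\zeta^\epsilon(0))-f_0^{s+r}(\zeta^0(0))$ of the integrands on the good set, which I would control by the continuity of $f_0^{s+r}$ together with dominated convergence.

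The heart of the argument is the construction of the bad set and the estimate of its measure, uniformly in $(t,\gr{x}_s)\in[0,T]\times K$. I would split it into two pieces. The first is the \emph{recollision set}: parameters for which, for $\epsilon>0$, two already created particles of $\zeta^\epsilon$ come within distance $\epsilon$ and collide, an event absent from the punctual flow $\zeta^0$. Using that in $\Lambda=[0,1]\times\mathbb{T}^2$ trajectories are straight segments between creations and reflections, I would freeze the remaining parameters and show that the velocity or deflection variables forcing such an encounter are confined to a set of measure $O(\epsilon)$, namely a cylinder of radius $\sim\epsilon$ about the relative position, with all constants kept uniform by the distance of $K$ to the diagonal $\mathfrak{D}_s$. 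The second piece is the \emph{reflection-time mismatch set}, the genuinely new difficulty coming from the boundary: since $\zeta^\epsilon$ and $\zeta^0$ differ by position shifts of order $\epsilon$, their travel times to the wall $\{0,1\}\times\mathbb{T}^2$ differ by $O(\epsilon)$, so each reflection time is displaced by $O(\epsilon)$; if such a displacement makes a reflection cross the final time $\tau=0$, the index in the sequence $\gr{\bar{\omega}}$ consumed up to time $0$ differs by one and the two outgoing velocities become unrelated. I would show that this forces a reflection time into an interval of length $O(\epsilon)$, again confining the parameters to a set of measure $O(\epsilon)$ uniformly, using Lemma \ref{estimation des reflections}, which caps the number of reflections by $t|v|\le T\sqrt E$ on the energy-truncated domain.

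On the complement of these two sets the velocities of $\zeta^\epsilon$ and $\zeta^0$ coincide at all times and the positions converge, $\zeta^\epsilon(0)\to\zeta^0(0)$ as $\epsilon\to0$; since $(f_0^s)_s\in\tilde{X}_{\beta,\mu}$, so that $f_0^{s+r}$ is continuous and dominated by $C\exp(-\tfrac{\beta}{2}\|\gr{v}_{s+r}\|^2)$, the integrand converges pointwise and is dominated, on the energy shell $\|\gr{v}_s\|^2+\|\gr{\bar v}_{s+1,s+r}\|^2\le E$, by an integrable function (the Gaussian times the polynomial cross-section weights times $\varphi_s$). Dominated convergence then yields convergence of the good-set integrals, while the bad-set integrals are bounded by the measure of the bad set times the same dominating function, hence are $O(\epsilon)\to0$. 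To promote pointwise convergence in $(t,\gr{x}_s)$ to the uniform convergence on $[0,T]\times K$ required by Definition \ref{definition de la convergence}, I would note that all the measure estimates above are uniform in $(t,\gr{x}_s)$ on the compact set (depending only on $E$, $r$, $s$, $T$ and $d(K,\mathfrak{D}_s)>0$) and that the limiting integrand depends continuously on $(t,\gr{x}_s)$ off a null set; removing a further slab of measure $O(\delta)$ around the discontinuity locus $\mathcal{F}(0)$ of the endpoint map and invoking uniform continuity of $f_0^{s+r}$ on the compact energy–position region makes the convergence uniform, after which I let $\delta\to0$.

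The main obstacle I anticipate is the reflection-time mismatch estimate. Unlike the classical boundaryless Lanford theorem, here the outgoing velocity after a reflection is dictated by the recorded angular sequence $\gr{\bar{\omega}}$, so an $O(\epsilon)$ error in a reflection time is not a harmless $O(\epsilon)$ velocity error but can reshuffle the entire angular bookkeeping and produce an $O(1)$ discrepancy. Quantifying that the parameters producing such a crossing form a set of measure $O(\epsilon)$ — tracking how the $\epsilon$-shifts accumulated at successive creations propagate through the chain of wall reflections in the box, and doing so uniformly over $K$ and over the bounded number of reflections — is precisely the point where the special geometry of $\Lambda=[0,1]\times\mathbb{T}^2$ is used, and is the most delicate step.
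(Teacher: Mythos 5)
Your overall architecture is the same as the paper's (couple the two pseudotrajectories on the same parameters, remove a bad set of recollisions and reflection-time mismatches, conclude on the good set by uniform continuity of $f_0^{s+r}$ plus domination), and you correctly single out the reflection bookkeeping as the new difficulty. However, the quantitative claims on which your plan rests fail in this geometry, and they fail precisely because of grazing interactions with the boundary, which is where the paper's real work lies. First, it is not true that outside a vanishing set the positions stay within $O(r\epsilon)$: at each reflection on $\{0,1\}\times\mathbb{T}^2$ the displacement between the two trajectories is amplified by a factor of order $E^{1/2}/|v\cdot\e|$, which is unbounded. The paper never gets a pointwise $O(\epsilon)$ bound; it only controls the displacement \emph{after integrating} the singularity $1/|v\cdot\e|$ against the collision kernel (estimate \eqref{intégration de 1/v}), yielding the bound $\epsilon k\left(CE^{5/2}|\log\epsilon|\right)^k$ of \eqref{estimation de continuité du flot}, and the author explicitly remarks that the $|\log\epsilon|^r$ loss from grazing reflections cannot be substantially improved. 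For the same reason your shift set is not $O(\epsilon)$: the time displacement is $\delta\tau\sim(\epsilon+|x^0-x^\epsilon|)/|v\cdot\e|$, and after integrating the $1/|v\cdot\e|$ and $1/|v\cdot\e|^2$ singularities (estimates \eqref{intégration de 1/v}, \eqref{intégration de 1/v^2}) the paper only reaches $(\epsilon+|x^0-x^\epsilon|)^{1/2}C(E,T)$. Likewise, your ``cylinder of radius $\sim\epsilon$'' recollision estimate is the boundaryless (torus) argument: it only applies when no reflection separates creation from recollision, and the uniformity ``by the distance of $K$ to $\mathfrak{D}_s$'' only protects the $s$ initial particles --- created particles start at distance exactly $\epsilon$ from their progenitor. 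When a reflection intervenes, the constrained variable is a reflection direction $\omega\in\mathbb{S}^2$ confined to the intersection of the sphere with a thin cylinder, whose surface measure is of order $(\epsilon^{3/4}/d)^{1/2}$ (see \eqref{estimation des directions de recollision}), not $\epsilon/d$; this forces the additional cuts $d>\epsilon^{1/2}$, the grazing-velocity truncation $|v\cdot\e|>\epsilon^{1/4}$ (the set $\mathcal{P}_4$), the covering space $\tilde\Lambda$ and the virtual-particle device, and produces final bad-set measures like $\epsilon^{1/12}$ rather than $\epsilon$.

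There is a second, structural gap in your treatment of shifts: you only declare a mismatch fatal when a displaced reflection crosses the final time $\tau=0$. But a mismatch window containing \emph{any} creation time $t_{i'}$ is equally fatal: the velocity of the deviated particle at $t_{i'}$ then differs by $O(1)$ between the two flows, so the cross-section weight $[\sigma_{i'}\nu_{i'}\cdot(\bar v_{i'}-v_{a(i')}(t_{i'}^+))]_+$, the scattering, and every subsequently created particle differ. This is why the paper's decomposition $\mathcal{P}_1^{i,j,i'}$ is indexed by the creation $i'$ at which the mismatch is observed, and why the creation times $t_{i'}$ (which are integration variables) are the ones confined to small intervals. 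Finally, your claim that the mismatch between $\mathcal{G}^\epsilon$ and $\mathcal{G}^0$ ``vanishes on the good set since the base velocities agree'' is incomplete: the admissible domains also differ for positional reasons --- a creation at $x_{a(k)}+\epsilon\nu_k$ can be obstructed by a third particle or by the boundary, or the punctual flow can produce an overlap that the hard-sphere flow forbids --- which is exactly what the paper's sets $\mathcal{P}_2'$ (overlaps) and $\mathcal{P}_3$ (creation within $2\epsilon$ of the boundary or another particle) are introduced to exclude. Your plan is a correct skeleton, but filling it requires replacing each of your $O(\epsilon)$ claims by the integrated, fractional-power estimates above, which constitute the substance of the paper's proof.
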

	\end{proof}
	\section{Proof of proposition \ref{Prop de convergence term a term}}\label{Term by term convergence}	
	We fix collision parameters $(a,\gr{\sigma})$ and a compact set $K\in\Lambda^s\setminus\mathfrak{D}^s$. In order to prove \eqref{convergence term à term}, we compare the two pseudotrajectories. Fixed an initial position $\gr{x}_s$ at time $t$, we need to know the first difference between hard spheres and punctual process (going backward from time $t$ to $0$). We can construct four bad sets $ \mathcal{P}_1(\epsilon)$, $ \mathcal{P}_2(\epsilon)$, $ \mathcal{P}_2'(\epsilon)$ and $\mathcal{P}_3(\epsilon)$ such that
	\begin{itemize}
		\item for $(\gr{v}_s,(\gr{t,\nu,\bar{v}})_{s+1,s+r},\gr{\bar{\omega}}_{s+r})\in\mathcal{P}_1(\epsilon)$,  there is first a {\it shift}, \textit{i.e.} there is a $i\in\{s+1,\cdots,s+r\}$ such that $\bar{\gr{\omega}}_{s+r}^0(t_i)\neq \bar{\gr{\omega}}_{s+r}^\epsilon(t_i)$ and there is no recollision during $(t_i,t)$ (recall that $t_{s+r}=0$),
		\item for $(\gr{v}_s,(\gr{t,\nu,\bar{v}})_{s+1,s+r},\gr{\bar{\omega}}_{s+r})\in \mathcal{P}_2(\epsilon)$, the hard sphere process has first a recollision, \textit{i.e.} there is a first recollision at time $\tau$ and no shift during $(\tau,t)$,
		\item for $(\gr{v}_s,(\gr{t,\nu,\bar{v}})_{s+1,s+r},\gr{\bar{\omega}}_{s+r})\in\mathcal{P}_2'(\epsilon)$, the punctual process has first an \emph{overlap}, \textit{i.e.} two particles reach a distance less than $\epsilon$ at some time $\tau$ (with an exception for particles $i$ and $a(i)$ just after $t_i$),
		\item for $(\gr{v}_s,(\gr{t,\nu,\bar{v}})_{s+1,s+r},\gr{\bar{\omega}}_{r+s})\in\mathcal{P}_3(\epsilon)$, there is some $i\in\{s+1,\cdots,s+r\}$ such that $x^\epsilon_{a(i)}(t_i)$ is at distance less than $2\epsilon$ from the boundary or from any other particle, and $\mathcal{P}_3(\epsilon)\cap\left(\mathcal{P}_1(\epsilon)\cup\mathcal{P}_2(\epsilon)\cup\mathcal{P}'_2(\epsilon)\right)=\emptyset$.
	\end{itemize}
	
	Note that $\mathbb{G}^\epsilon\setminus\left( \mathcal{P}_1(\epsilon)\cup \mathcal{P}_2(\epsilon)\cup \mathcal{P}'_2(\epsilon)\cup \mathcal{P}_3(\epsilon)\right) = \mathbb{G}^0\setminus\left( \mathcal{P}_1(\epsilon)\cup \mathcal{P}_2(\epsilon)\cup \mathcal{P}'_2(\epsilon)\cup \mathcal{P}_3(\epsilon)\right)$. Indeed outside $\mathcal{P}_3$, all relative positions $\nu_i$ are allowed at creation $i$, and outside $ \mathcal{P}_1(\epsilon)\cup \mathcal{P}_2(\epsilon)\cup \mathcal{P}'_2(\epsilon)$ the velocities of the incoming particles at the reflections are the same in hard sphere and punctual backward pseudotrajectories.

	In the following we will need to integrate on the collision parameters, we then introduce the following useful terminology. The particle $a(i)$ is \emph{deviated} by the creation $i\in\{s+1,\cdots s+r\}$ if $\sigma_i$ is positive. In that case there is a scattering of the speed. The particle $i$ is by definition deviated by the creation $i$. We recall that we look at backward trajectories and we follow time in the inverse sense.
	\subsection{Continuity estimates of the two pseudotrajectories}
	We begin by a continuity estimate for the process involving one particle.
	
	\begin{lemma}
		Let $(x_0,\tilde{x}_0)\in\Lambda^2$, $v\in B(E^{1/2})$ (the ball in $\mathbb{R}^3$ of diameter $E^{1/2}$) and $\gr{\bar{\omega}}_2\in\Omega^2$. The coordinates $(x_0,v)$ (respectively $(\tilde{x}_0,v)$) correspond to a particle $1$ in the punctual process (respectively in hard sphere process). At time $t$ we create a particle $2$: for $(\nu,v_*)\in\mathbb{S}^2\times B(E^{1/2})$, $\tau\in(0,t)$,   the two punctual  backward pseudotrajectories are defined as follow: for $\tau \in (0,t)$
		\begin{itemize}
			\item $(\gr{z}_2(\tau),\gr{\bar{\omega}}_2(\tau))$ is the backward trajectory without interaction with initial conditions $(x_0, v,$ $x_0, v_*, \gr{\bar{\omega}}_2)$ if $(v_*-v)\cdot\nu>0$,  and $(x_0 ,v', x_0, v_*', \gr{\bar{\omega}}_2)$ else,
			\item $(\tilde{\gr{z}}_2(\tau),\gr{\tilde{\bar{\omega}}}_2(\tau))$
			 is the backward trajectory without interaction with initial conditions $(\tilde{x}_0,v,$ $\tilde{x}_0+\epsilon\nu,v_*,\gr{\bar{\omega}}_2)$ if $(v_*-v)\cdot\nu>0$,  and $(\tilde{x}_0,v', \tilde{x}_0+\epsilon\nu,v_*',\gr{\bar{\omega}}_2)$ else.
		\end{itemize}
		
		Then if the particle $i\in\{1,2\}$ is deviated by the creation, there exists $C>0$ such that for $\tau\in(0,t)$
		\begin{multline}
		\int_{B(E^{1/2})\times\mathbb{S}^2} |x_i(\tau)-\tilde{x}_i(\tau)|\ind_{\bar{\omega}_i(\tau)=\tilde{\bar{\omega}}_i(\tau)} |\nu\cdot(v-v_*)|dv_*d\nu \\ \leq CE^{5/2}\left(\epsilon+ |x_0-\tilde{x}_0|\right)\left|\log\left(\epsilon+ |x_0-\tilde{x}_0|\right)\right|.
		\end{multline}
	\end{lemma}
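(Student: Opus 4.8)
The plan is to reduce the estimate to a one-dimensional bouncing problem in the first coordinate, exploit the fact that on the event $\{\bar\omega_i(\tau)=\tilde{\bar\omega}_i(\tau)\}$ the two trajectories carry exactly the same sequence of velocities (only the reflection \emph{times} differ), and then integrate the resulting inverse–normal–speed singularity against the cross-section $|\nu\cdot(v-v_*)|\,dv_*\,d\nu$ to produce the logarithm. Set $\rho:=\epsilon+|x_0-\tilde x_0|$. Since $\Lambda=[0,1]\times\mathbb{T}^2$, only the first coordinate meets a boundary, while the two tangential coordinates evolve freely. On the matching event both copies of particle $i$ have undergone the same number of reflections with the same recorded directions, hence they share the same speed $|w_i|\le 3E^{1/2}$ (energy is conserved), where $w_i$ is the velocity of particle $i$ just after the creation. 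Consequently $x_i(\tau)-\tilde x_i(\tau)$ is constant between consecutive reflections, and all the growth is produced at the bounce instants, where one copy has already reflected while the other has not yet reached the wall.

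The core is a reflection-by-reflection recursion. Label the successive backward legs of particle $i$ by $k=0,1,\dots,K$, with $K\le t|w_i|\lesssim E^{1/2}$ by Lemma \ref{estimation des reflections}, and let $c_k$ be the first-coordinate speed on leg $k$, so $c_k\le|w_i|\lesssim E^{1/2}$. A direct computation at a single wall crossing shows that the first-coordinate gap is multiplied by $c_k/c_{k-1}$ at reflection $k$; since the speed is constant between reflections this telescopes to $|x_i^{(1)}(\tau)-\tilde x_i^{(1)}(\tau)|\le \rho\,c_k/c_0\le \rho\,E^{1/2}/c_0$, where $c_0:=|w_i\cdot\e|$. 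The overshoot time at reflection $k$ equals the incoming first-coordinate gap divided by $c_{k-1}$, which the telescoping renders \emph{constant}, equal to $\rho/c_0$; as each reflection changes the tangential velocity by at most $2|w_i|\lesssim E^{1/2}$ and there are $\lesssim E^{1/2}$ of them, the tangential gap accumulates to $\lesssim E\rho/c_0$. Adding the two contributions gives, uniformly in $\tau\in(0,t)$,
\[ |x_i(\tau)-\tilde x_i(\tau)|\;\le\; C\,\frac{E\rho}{c_0}\qquad\text{on }\{\bar\omega_i(\tau)=\tilde{\bar\omega}_i(\tau)\}, \quad c_0=|w_i\cdot\e|. \]

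It remains to integrate this bound in $(\nu,v_*)$. Because $i$ is deviated, $w_i$ is one of $v',v_*,v_*'$ and depends genuinely on the integration variables, and in each case $c_0=|w_i\cdot\e|$ is a non-degenerate affine function of $v_*$ for a.e.\ fixed $\nu$, so $\{c_0<a\}$ is a slab whose $dv_*$-measure over $B(E^{1/2})$ is $\lesssim aE$. Using the trivial bound $|x_i(\tau)-\tilde x_i(\tau)|\le C\,\mathrm{diam}(\Lambda)$, I split the integration at $c_0=E\rho$. On $\{c_0\ge E\rho\}$ the fibre integral $\int |w_i\cdot\e|^{-1}\ind_{c_0\ge E\rho}\,dv_*$ is of order $E\log\!\big(E^{-1/2}\rho^{-1}\big)\lesssim E|\log\rho|$; multiplying by the prefactor $E\rho$, by $|\nu\cdot(v-v_*)|\le 2E^{1/2}$ and by the bounded $\nu$-integral yields $\lesssim E^{5/2}\rho|\log\rho|$. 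On $\{c_0<E\rho\}$ the corresponding slab has measure $\lesssim E^2\rho$, and with the constant bound and the same factors this contributes $\lesssim E^{5/2}\rho\le E^{5/2}\rho|\log\rho|$. Together these give the claimed estimate.

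The main obstacle is the recursion step. Making rigorous the heuristic "the gap is multiplied by the ratio of normal speeds" requires showing that on the matching event the two copies really do strike the same wall in the same order with only a short time-shift, and that the telescoped bound $\rho E^{1/2}/c_0$ persists even when intermediate legs $c_k$ are nearly grazing. What rescues the argument is that the overshoot time $\rho/c_0$ is invariant along the trajectory, so grazing intermediate legs never amplify the gap beyond the single factor $1/c_0$ coming from the initial leg; isolating this initial factor as the sole source of the singularity—and hence of the logarithm—while controlling the degeneracy of $c_0=|w_i\cdot\e|$ under the scattering geometry, is the delicate part.
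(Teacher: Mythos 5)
Your geometric step is essentially the paper's own argument in a different guise: the invariance of the overshoot time $\rho/c_0$ along the trajectory is what the paper phrases as a parallelogram property (the two walls of $\Lambda$ are parallel and, on the matching event, both copies carry the same velocity sequence), and your telescoping of the normal gap to $\rho\,c_k/c_0$ agrees with it. One remark here: your treatment of the tangential gap by accumulating $K\lesssim E^{1/2}$ increments of size $E^{1/2}\rho/c_0$ is lossy. The parallelogram property shows that the full vector gap taken at corresponding reflection instants is \emph{constant}, equal to $\delta x$, so at equal times the gap is $|\delta x-\delta\tau\,w_k|\leq|\delta x|+E^{1/2}|\delta\tau|\lesssim E^{1/2}\rho/c_0$, with no accumulation at all; your bound $E\rho/c_0$ costs a spurious factor $E^{1/2}$, which matters once the integration step is done correctly (see below).

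The genuine gap is in the integration step. You assert that for a.e.\ fixed $\nu$ the map $v_*\mapsto c_0=|w_i\cdot\e|$ is a non-degenerate affine function, so that $\{c_0<a\}$ has $dv_*$-measure $\lesssim aE$, and that the remaining $\nu$-integral is bounded. This is correct only when $w_i=v_*$. In the scattering cases $w_i=v'$ or $w_i=v_*'$ --- which do occur, since particle $1$ is deviated precisely when there is scattering, and particle $2$ then carries $v_*'$ --- one has $v'\cdot\e=v\cdot\e-\big((v-v_*)\cdot\nu\big)(\nu\cdot\e)$, so the gradient in $v_*$ is $(\nu\cdot\e)\,\nu$: the slab $\{|v'\cdot\e|<a\}$ has width $2a/|\nu\cdot\e|$, the constant in your slab estimate blows up as $\nu\cdot\e\to0$, and the ensuing $\nu$-integral behaves like $\int_{\mathbb{S}^2}|\nu\cdot\e|^{-1}d\nu$, which diverges. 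This degeneracy is exactly what the paper's Appendix \ref{coordonné de Calerman} is designed to handle: the Carleman parametrization $(v_*,\nu)\mapsto(v',v_*')$ transports the measure $|(v-v_*)\cdot\nu|\,dv_*d\nu$ onto $dv'\,dS(v_*')$, so that all three of $v_*,v',v_*'$ become flat integration variables, giving the uniform slab bound $CE^2|b-a|$ and then the estimate \eqref{intégration de 1/v} which the paper integrates against its geometric bound $3E^{1/2}\rho/|v_i\cdot\e|$ to obtain $CE^{5/2}\rho|\log\rho|$. Note also that the two flaws interact: if you repair the integration via Carleman, your lossy prefactor $E\rho/c_0$ produces $E^{3}\rho|\log\rho|$, not the stated $E^{5/2}\rho|\log\rho|$; so to prove the lemma as stated you need both the sharper (non-accumulating) geometric bound and the Carleman change of variables (or a substantially more delicate joint analysis in $(v_*,\nu)$ exploiting the collision weight and the energy truncation near $\nu\cdot\e\approx0$). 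As written, the proof does not close.
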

	\begin{proof}
		Fix $(v_*, \nu)$ and denote $\tau_k$ (respectively $\tau_k+\delta\tau_k$) the time of the $k$-th reflection of the particle $z_i$ (respectively the particle $\tilde{z}_i$). For $k\geq 1$, because between their $k$-th and $(k+1)$-th reflections the two particles have the same velocity and the two sides of $\partial\Lambda$ are parallel, \[(x_i(\tau_k),x_i(\tau_{k+1}),\tilde{x}_i(\tau_{k+1}+\delta\tau_{k+1}),\tilde{x}_i(\tau_{k}+\delta\tau_{k}))\] forms a parallelogram. Hence $\delta\tau_k$ and $(\tilde{x}_i(\tau_k+\delta\tau_k)-x_i(\tau_k))$ do not depend on $k$ (we denote them $\delta\tau$ and $\delta x$).
		
		Because we suppose that there is no shift at time $\tau$, $z_i$ and $\tilde{z}_i$ have the same number of reflections. If there is no reflection, $|x_i(\tau)-\tilde{x}_i(\tau)|$ is constant. If there are $k\geq 1$ reflections, we can suppose that $\delta\tau>0$ and
		\[\begin{split}
		\left|x_i(\tau)-\tilde{x}_i(\tau)\right|=\left|x_i(\tau_k)-\tilde{x}_i(\tau_k)\right|&\leq\left|x_i(\tau_k)-\tilde{x}_i(\tau_k+\delta\tau)\right|+\left|\tilde{x}_i(\tau_k+\delta\tau)-\tilde{x}_i(\tau_k)\right|\\
		&\leq |\delta x|+E^{1/2}|\delta\tau|.
		\end{split}\]
		We can treat similarly the case $\delta\tau<0$.
			
		Because  the two trajectories are parallel before the first reflection, 
		\[\delta \tau = ([x_i(0)-\tilde{x}_i(0)]\cdot \e)/(v_i\cdot\e) \mathrm{~and~}|\delta x| \leq \frac{|v_i|}{|v_i\cdot\e|}\left|x_i(0)-\tilde{x}_i(0)\right|.\]
		Thus if $\bar{\omega}_i(\tau) = \tilde{\bar{\omega}}_i(\tau)$, 
		\[|x_i(\tau)-\tilde{x}_i(\tau)|\leq|x_i(0)-\tilde{x}_i(0)|\left(1+\frac{2E^{1/2}}{|v_i\cdot\e|}\right)\leq\left(|x_0-\tilde{x}_0|+\epsilon\right)\frac{3E^{1/2}}{|v_i\cdot\e|},\]
		and it is also bounded by some constant $C_\Lambda$ because $\Lambda$ is compact. Because the particle $i$ is deviated, we can integrate on the scattering: using estimation \eqref{intégration de 1/v} (Appendix \ref{coordonné de Calerman})
		\[\begin{split}\int_{B(E^{1/2})\times\mathbb{S}^2} |x_i(\tau)-\tilde{x}_i(\tau)|&\ind_{\bar{\omega}_i(\tau)=\tilde{\bar{\omega}}_i(\tau)} |\nu\cdot(v-v_*)|dv_*d\nu\\
		&\leq\int_{B(E^{1/2})\times\mathbb{S}^2}\left(\frac{3E^{1/2}\left(|x_0-\tilde{x}_0|+\epsilon\right)}{|v_i\cdot\e|}\wedge C_\Lambda\right)|\nu\cdot(v-v_*)|dv_*d\nu\\
		&\leq CE^{5/2}(|x_0-\tilde{x}_0|+\epsilon)\log(|x_0-\tilde{x}_0|+\epsilon).\end{split}\]
	\end{proof}
	
	Fix $(t_{s+1},\cdots)$. Let $\tau\in[0,t]\setminus \{t_i,s<i<s+r\}$. If there is no recollision nor overlap and if $\gr{\bar{\omega}}^\epsilon(\hat{t})={\gr{\bar{\omega}}}^0(\hat{t})$ for $\hat{t}\in\mathfrak{T}(\tau):=\{\tau\}\cup\{t_i, t_i>\tau\}$, then by an iteration argument $v^0_k(\tau)=v^\epsilon_k(\tau)$ (in the following denoted $v_k(\tau)$). Fixing $\gr{x}_s$, $\gr{v}_s$, $\gr{t}_{s+1,s+r}$ and $\gr{\bar{\omega}}_{s+r}$, we can then iterate the previous estimates and obtain 
	
	\begin{prop}
		For $k\in\{1,\cdots,r\}$, $\tau\in[t_{k+1},t_k)$ and $i\in\{1,\cdots,s+k\}$ fixed, we have that:
		\begin{multline}\label{estimation de continuité du flot}
		\int_{B_k(E^{1/2})\times(\mathbb{S}^2)^k}\ind_{\mathrm{no~recollision}}\ind_{\bar{\gr{\omega}}^\epsilon(\hat{t})=\bar{\gr{\omega}}^0(\hat{t}),~\hat{t}\in\mathfrak{T}(\tau)} |x_i^0(\tau)-{x}^\epsilon_i(\tau)|\times \prod_{l=s+1}^{s+k}\left|\nu_l\cdot\left(\bar{v}_l-v_{a(l)}\left(t_l^+\right)\right)\right|d\nu_ld\bar{v}_l
		\\
		\leq \epsilon k\left(CE^{5/2}|\log\epsilon|\right)^{k}
		\end{multline}
		for $B_k$ the ball on $\mathbb{R}^{3k}$.
	\end{prop}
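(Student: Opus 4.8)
The plan is to prove \eqref{estimation de continuité du flot} by strong induction on $k$, the number of creations occurring in $(\tau,t]$, using the preceding one-particle continuity Lemma as the elementary building block. Throughout, write $b := CE^{5/2}|\log\epsilon|$ for the factor produced by that Lemma, so that the target is $\epsilon k\,b^{k}$; I also write $\Delta_i(\tau)$ for the left-hand side of \eqref{estimation de continuité du flot}, the integrated position discrepancy of particle $i$ at time $\tau$.

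First I would record two reductions that make the elementary estimate usable inside an iteration. Since $\epsilon + |x_0-\tilde x_0|\ge \epsilon$ while being bounded by the diameter of $\Lambda$, one has $|\log(\epsilon + |x_0-\tilde x_0|)|\le C|\log\epsilon|$ for $\epsilon$ small; substituting this into the Lemma \emph{linearizes} its right-hand side into $b\,(\epsilon + |x_0-\tilde x_0|)$. This linearity is essential: the Lemma controls an \emph{integrated} discrepancy, and only a linear dependence on $|x_0-\tilde x_0|$ allows the \emph{integrated} parent discrepancy to be substituted directly at the next step. Second, a creation that does \emph{not} deviate the tracked particle does not enter its trajectory, so its impact parameters integrate only against the weight; the energy truncation $\|\bar{\gr{v}}\|^2\le E$ gives $\int_{B(E^{1/2})\times\mathbb{S}^2}|\nu\cdot(\bar{v}-v_{a})|\,d\nu\,d\bar{v}\le CE^{2}\le b$ for $\epsilon$ small, so such a creation also costs at most $b$.

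For the inductive step I would peel the most recent creation above $\tau$, which produces particle $s+k$ from its parent $a(s+k)$. If the tracked particle $i$ is deviated by this creation (i.e. $i=s+k$, or $i=a(s+k)$ with $\sigma_{s+k}=1$), then on the segment from that creation down to $\tau$ the particle $i$ keeps a fixed velocity and only reflects, so the Lemma applies: integrating out $(\nu_{s+k},\bar{v}_{s+k})$ and then the remaining $k-1$ impact parameters yields $\Delta_i(\tau)\le b\,(\epsilon\,W + \Delta_{a(s+k)})$, where $W\le(CE^2)^{k-1}\le b^{k-1}$ collects the remaining weights and $\Delta_{a(s+k)}\le\epsilon(k-1)b^{k-1}$ by the inductive hypothesis; this gives $\Delta_i(\tau)\le \epsilon k\,b^{k}$. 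If instead $i$ is not deviated by this creation, its velocity is unchanged there, its impact parameters integrate to a weight $\le b$, and because $i$ retains a fixed velocity until its previous deviation the $\tau$-uniformity of the Lemma's bound lets me control its discrepancy at $\tau$ by the same quantity as at the creation time; the inductive hypothesis then gives $\Delta_i(\tau)\le b\cdot\epsilon(k-1)b^{k-1}\le\epsilon k\,b^{k}$. The base case $k=0$ is immediate: before any creation the two pseudotrajectories issue from the same data, and as long as $\bar{\gr{\omega}}^\epsilon=\bar{\gr{\omega}}^0$ on $(\tau,t]$ and no recollision occurs they evolve by identical free transport with identical reflections, so the difference vanishes.

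The step I expect to be the main obstacle is exactly the bookkeeping that legitimizes this iteration, and it rests on the two reductions above. The logarithmic linearization is what turns the Lemma into a genuinely linear recursion in the parent discrepancy; and the $\tau$-uniformity of the elementary bound (itself a consequence of the parallelogram structure, by which the positional shift is constant across reflections and does not amplify) is what makes the interleaving of the ``off-line'' creations harmless — they only ever contribute weight factors $\le b$. One must take care that the hypotheses encoded in the indicator functions are genuinely used: the absence of recollision and of shift on $(\tau,t]$ is precisely what guarantees that the velocities coincide in both pseudotrajectories and that each particle travels by free transport between creations, so that the Lemma's single-particle analysis can be applied segment by segment along the ancestry of $i$.
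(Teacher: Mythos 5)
Your overall strategy is the one the paper uses: iterate the one-particle continuity lemma along the chain of creations, linearize the logarithm via $|\log(\epsilon+d)|\le C|\log\epsilon|$ on $[\epsilon, C_\Lambda]$, count creations that do not deviate the tracked particle as a factor $CE^2\le CE^{5/2}|\log\epsilon|$, and use that the discrepancy vanishes before the first creation. The paper organizes this as an explicit iteration along the ancestry pseudoparticle $\ti$ of $i$ (the recursion \eqref{borne de continuité, récurence}, with $\eta(x)=x|\log x|$), after reducing \emph{once and for all} to the case where every creation deviates $\ti$; you organize it as a strong induction on $k$, peeling the creation closest to $\tau$. Your case 1 is correct: there the inductive hypothesis is applied to the parent $a(s+k)$ at time $t_{s+k}$ \emph{within the same system}, and the indicator events at level $k$, time $\tau$, do imply those at level $k-1$, time $t_{s+k}$, because trajectories above $t_{s+k}$ are unaffected by the creation $s+k$.

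Your case 2, however, has a genuine gap as written. You invoke the inductive hypothesis ``at time $\tau$'' for the system with creation $s+k$ removed. But the indicator in \eqref{estimation de continuité du flot} involves no-recollision and no-shift conditions for \emph{all} particles, and when $\sigma_{s+k}=+1$ with $a(s+k)\neq i$ the parent $a(s+k)$ is deviated at $t_{s+k}$: below $t_{s+k}$ its trajectory in the reduced system differs from its trajectory in the full system, so the reduced system's indicator events are neither implied by nor even comparable to the full system's, and the inductive hypothesis cannot be applied as a black box. Your fallback, ``$\tau$-uniformity,'' does not close this either: pointwise, $|x_i^0(\tau)-x_i^\epsilon(\tau)|$ is \emph{not} controlled by $|x_i^0(t_{s+k})-x_i^\epsilon(t_{s+k})|$, since intervening reflections amplify the discrepancy by $3E^{1/2}/|v_i\cdot\e|$, which is unbounded for grazing velocities; that amplification can only be absorbed by integrating over the parameters of $i$'s \emph{last deviation} (via \eqref{intégration de 1/v}), i.e.\ by re-running the induction with final time $\tau$ rather than citing its conclusion. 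The repair is exactly the paper's opening move: before inducting, bound the indicator by the conditions involving only the ancestry chain of $i$ (this only increases the integral, and these conditions are insensitive to non-chain creations), and dispose of all creations not deviating the chain at once as $CE^2$ factors. After that reduction every remaining creation deviates the chain, your case 2 never occurs, and your case 1 closes the induction — at which point your proof and the paper's are the same computation.
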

	\begin{proof}
		Note that $B_k(E^{1/2})\subset(B_1(E^{1/2}))^k$, thus we will replace the first one by the second.
		
		In the following $\ti$ will indicate the following pseudoparticle: $\ti$ is equal to $i$ from time $\tau$ to time $t_i$. Then $\ti$ becomes $a(i)$ until time $t_{a(i)}$, \textit{etc} until $t$. We can suppose that every creation deviates $\ti$. Else it does not influence $|x_i^0(\tau)-{x}^\epsilon_i(\tau)|$ and we can count it as a factor $CE^2$.
		
		We want to prove recursively for $m\in\{1,\cdots k\}$
		\begin{multline}\label{borne de continuité, récurence}
		\int_{(B(E^{1/2})\times\mathbb{S}^2)^m}\ind_{\scriptscriptstyle{\mathrm{no~recollision}}}\ind_{\begin{matrix}\scriptscriptstyle{\bar{\gr{\omega}}^\epsilon(\hat{t})=\bar{\gr{\omega}}^0(\hat{t}),}\\\scriptscriptstyle{\hat{t}\in\mathfrak{T}(\tau)}\end{matrix}} |x_i^0(\tau)-{x}^\epsilon_i(\tau)| \prod_{l=s+k+1-m}^{s+k}\left|\nu_l\cdot\left(\bar{v}_l-v_{a(l)}\left(t_l^+\right)\right)\right|d\nu_ld\bar{v}_l
		\\
		\leq \left(CE^{5/2}\right)^m |\log\epsilon|^{m-1}\left[(m-1)\epsilon+\eta\left(\left|x^0_\ti(t_{s+k+1-m}^+)-x^\epsilon_\ti(t_{s+k+1-m}^+)\right|+\epsilon\right)\right]
		\end{multline}
		for $\eta(x)=x|\log(x)|$.
		The initialization is provided by the previous lemma. To prove the induction, as in the previous lemma
		\[\left|x^0_\ti(t_{s+k+1-m}^+)-x^\epsilon_\ti(t_{s+k+1-m}^+)\right|\leq\left( \left|x^0_\ti(t_{s+k-m}^-)-x^\epsilon_\ti(t_{s+k-m}^-)\right|\frac{3E^{1/2}}{\left|v_\ti(t_{s+k-m}^-)\cdot\e\right|}\right)\wedge C_\Lambda.\]
		Thus $\left(\left|x^0_\ti(t_{s+k+1-m}^+)-x^\epsilon_\ti(t_{s+k+1-m}^+)\right|+\epsilon\right)$ stays between $\epsilon$ and $C_\Lambda+\epsilon$ and its $\log$ is smaller than $\sup(|\log\epsilon|,|\log C_\Lambda+\epsilon|)= |\log\epsilon|$. Hence the left hand side of \eqref{borne de continuité, récurence} is bounded by
		\begin{equation*}
		\left(CE^{5/2}\right)^m |\log\epsilon|^{m}\left[m\epsilon+\left(\left|x^0_\ti(t_{s+k-m}^-)-x^\epsilon_\ti(t_{s+k-m}^-)\right|\frac{3E^{1/2}}{\left|v_\ti(t_{s+k-m}^-)\cdot\e\right|}\right)\wedge C_\Lambda\right].
		\end{equation*}
		With help of \eqref{intégration de 1/v}, we can integrate this estimates on $B(E^{1/2})\times\mathbb{S}^2$ with respect to measure \[ \left|\nu_{s+k-m}\cdot(\bar{v}_{s+k-m}- v_{a(s+k-m)}(t_{s+k-m}^+))\right|d\nu_{s+k-m} d\bar{v}_{s+k-m}\]
		and obtain the expected estimates.
		
		Because before the first creation punctual and hard sphere backward processes coincide, $\left|x^0_\ti(t_{s+1}^+)-x^\epsilon_\ti(t_{s+1}^+)\right|$ vanishes. Therefore $m=k$ gives the expected result.
	\end{proof}

	Integrating \eqref{estimation de continuité du flot} on $(\gr{v}_s,\gr{t}_{s+1,s+r},\gr{\bar{\omega}}_{s+r})$, 
	\begin{equation}
	\int_{\mathbb{G}^0\setminus\left(\mathcal{P}_1\cup \mathcal{P}_2\cup\mathcal{P}_2'\cup\mathcal{P}_3\right)(\epsilon)} \left|\zeta^\epsilon(0)-\zeta^0(0)\right|d\gr{v}_sd\Lambda^0\leq \epsilon|\log\epsilon|^r \frac{\left(CE^{5/2}\right)^{s+r}T^r}{r!}.
	\end{equation}
	Note that if the domain is the torus, the previous proof provides the bound $\epsilon{\left(CE^{5/2}\right)^{s+r}T^r}/{r!}$. The factor $|\log\epsilon|^r$ originates from the grazing reflections, and the author does not believe that this continuity estimate can be improved considerably.
	
	Because $f_0^{s+r}$ is continuous and $(\Lambda\times B(E^{1/2}))^{s+r}$ is compact, $f_0^{s+r}$ is uniformly continuous and the error goes to zero. $\varphi_s(\gr{v}_s)$ is bounded on $\mathbb{G}^0$. Finally we get:
	\begin{prop}
	\begin{equation}
	\int_{\mathbb{G}^0\setminus\left(\mathcal{P}_1\cup \mathcal{P}_2\cup\mathcal{P}_2'\cup\mathcal{P}_3\right)(\epsilon) }\left[f_{0}^{s+r}(\zeta^\epsilon(0))-f_{0}^{s+r}(\zeta^0(0))\right]\varphi_s(\gr{v}_s)d\gr{v}_sd\Lambda^0 \rightarrow 0
	\end{equation}
	uniformly in $[0,T]\times K$.
	\end{prop}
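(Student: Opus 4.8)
The plan is to deduce this statement directly from the preceding endpoint estimate
\[
\int_{\mathbb{G}^0\setminus\left(\mathcal{P}_1\cup \mathcal{P}_2\cup\mathcal{P}_2'\cup\mathcal{P}_3\right)(\epsilon)} \left|\zeta^\epsilon(0)-\zeta^0(0)\right|\,d\gr{v}_s\,d\Lambda^0\leq \epsilon|\log\epsilon|^r \frac{\left(CE^{5/2}\right)^{s+r}T^r}{r!},
\]
combined with the uniform continuity of $f_0^{s+r}$ and the boundedness of $\varphi_s$ on $\mathbb{G}^0$. The key observation is that on the good set $\mathbb{G}^0\setminus(\mathcal{P}_1\cup\mathcal{P}_2\cup\mathcal{P}_2'\cup\mathcal{P}_3)(\epsilon)$ the velocities of $\zeta^\epsilon(0)$ and $\zeta^0(0)$ coincide, so both endpoints lie in the fixed compact set $(\Lambda\times B(E^{1/2}))^{s+r}$ and differ only through positions, whose integrated size is the quantity bounded above. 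I would upgrade this $L^1$ control on positions to the desired vanishing of the integrated \emph{function} difference by splitting the domain according to whether $|\zeta^\epsilon(0)-\zeta^0(0)|$ is small or large.

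First I fix $\eta>0$. Since $f_0^{s+r}$ is continuous on the compact set $(\Lambda\times B(E^{1/2}))^{s+r}$, it is uniformly continuous there, so there is a $\delta>0$ with $|f_0^{s+r}(z)-f_0^{s+r}(z')|\leq\eta$ whenever $z,z'$ lie in this set and $|z-z'|\leq\delta$. On the portion of the domain where $|\zeta^\epsilon(0)-\zeta^0(0)|\leq\delta$ the integrand is then at most $\eta\,\|\varphi_s\|_\infty$, so this part contributes at most $\eta\,\|\varphi_s\|_\infty\,|\mathbb{G}^0|$, where $|\mathbb{G}^0|$ is the total mass of the (nonnegative) measure $d\gr{v}_s\,d\Lambda^0$ over the energy-truncated parameter set. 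This mass is finite and uniform in $(t,\gr{x}_s)$, because the velocities are confined to $B(E^{1/2})$ (so the weights $[\sigma_k\nu_k\cdot(\bar v_k-v_{a(k)}(t_k^+))]_+$ are bounded), the directions range over $\mathbb{S}^2$, the times over the simplex $\{t>t_{s+1}>\cdots>t_{s+r}>0\}$, and $d\mathbb{P}^{s+r}$ is a probability measure.

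On the complementary portion, where $|\zeta^\epsilon(0)-\zeta^0(0)|>\delta$, I would bound the integrand crudely by $2\|f_0^{s+r}\|_\infty\|\varphi_s\|_\infty$ and estimate the measure of this set by Markov's inequality applied to the nonnegative measure $d\gr{v}_s\,d\Lambda^0$,
\[
\mathrm{meas}\left\{|\zeta^\epsilon(0)-\zeta^0(0)|>\delta\right\}\leq \frac{1}{\delta}\,\epsilon|\log\epsilon|^r\frac{\left(CE^{5/2}\right)^{s+r}T^r}{r!},
\]
which tends to $0$ as $\epsilon\to0$ for $\delta$ fixed, so this contribution vanishes as well. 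Taking $\limsup_{\epsilon\to0}$ of the sum of the two contributions leaves only $\eta\,\|\varphi_s\|_\infty\,|\mathbb{G}^0|$, and since $\eta$ is arbitrary the integral converges to $0$. Uniformity over $[0,T]\times K$ is automatic: the modulus $\delta(\eta)$ depends only on $f_0^{s+r}$ and $E$, the $L^1$ bound has constants independent of $(t,\gr{x}_s)$, and the endpoints always land in the same fixed compact set.

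I do not expect a genuine obstacle here, since this is a routine passage from the $L^1$ endpoint estimate to weak convergence; the only points requiring care are verifying that $|\mathbb{G}^0|$ is finite and uniform in $(t,\gr{x}_s)$ (which follows from the energy truncation) and that the modulus $\delta(\eta)$ is independent of the base point $(t,\gr{x}_s)$ (which holds because $\zeta^\epsilon(0)$ and $\zeta^0(0)$ are always contained in $(\Lambda\times B(E^{1/2}))^{s+r}$). All the real difficulty has already been absorbed into the preceding geometric continuity estimate.
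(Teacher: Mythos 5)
Your proposal is correct and follows the same route as the paper: the paper's own proof is exactly this argument stated in one line (uniform continuity of $f_0^{s+r}$ on the compact set $(\Lambda\times B(E^{1/2}))^{s+r}$, the preceding $L^1$ estimate on $|\zeta^\epsilon(0)-\zeta^0(0)|$, and boundedness of $\varphi_s$), and you have merely filled in the standard splitting into $\{|\zeta^\epsilon(0)-\zeta^0(0)|\leq\delta\}$ and its complement via Markov's inequality. The points you flag for care (uniform finiteness of the mass of $d\gr{v}_s\,d\Lambda^0$ under the energy truncation, and the coincidence of velocities on the good set so that the difference is purely positional) are precisely the facts the paper relies on implicitly.
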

	
	\subsection{Estimation of $\mathcal{P}_1$}
	We decompose $\mathcal{P}_1$ into sets $\mathcal{P}_1^{i,j,i'}$: for $(\gr{v}_s,(\gr{t,\nu,\bar{v}})_{s+1,s+r},\gr{\bar{\omega}}_{r+s})\in\mathcal{P}_3(\epsilon)\in\mathcal{P}_1^{i,j,i'}$, there is no shift nor recollision in the interval $(t_{i'},t)$, $\bar{\omega}^0_j(t_{i'}^-)\neq\bar{\omega}^\epsilon_j(t_{i'}^-)$ at time $t_{i'}$ and the last creation deviating particle $j$ is $i$. There are at most $(s+r)^3$ such subsets and we only have to control $|\mathcal{P}_1^{i,j,i'}|$.
	
	We denote in the following $k_\iota(\tau)$ the last reflection of the particle $\iota$ before time $\tau$. Then $k_j(t_{i'})$ depends only on $(\gr{v}_s,\gr{(t,\nu,\bar{v})}_{s+1,i})$, on the $((\bar{\omega}_\iota^\kappa)_{\kappa\geq k_\iota(t_i)})_{1\leq\iota\leq i-1}\subset \gr{\bar{\omega}}_{s+r}$, and on the $(\bar{\omega}_j^\kappa)_{k_j(t_i)>\kappa>k_j(t_{i'})}$. The shift at time $t_{i'}$ does not depend on the \emph{remaining parameters} (which contribution is bounded by some constant $C(E,T,s+r)$).
	
	Then we look at the set of $(t_{i'},(\omega_j^k)_{k<k_j(t_i)})$ such that there is a shift. We can bound it directly by $T$.
	
	We denote, for $k>k_j(t_i)$, $\tau_k$ (respectively $\tau_k+\delta\tau$) the time of the reflection of $\omega_j^k$ after $t_i$ in the punctual process (respectively in the hard sphere process). Because we restrict to the interval $(t_{i'},t_i)$, the particle $j$ has no more collisions until the shift. As explained in the previous section until there is a shift, $\delta\tau$  does not depend on $k$. Denoting $|v|:= |v_j(t_i^-)|$, the $\tau_k$ follow the recurrence law $\tau_{k-1} = \tau_k - {1}/{(|v| |\omega_j^k\cdot \e|)}$. Thus there is a shift only if for some $k\geq k_j(t_i)$, $t_{i'}$ stays between $\tau_{k+1}$ and $\tau_{k+1}+\delta\tau$, and thus $t_{i'}\in [\tau_{k+1}-|\delta\tau|,\tau_{k+1}+|\delta\tau|]$. In the case where the two particles have at least one reflection after $t_i$, $k<k_j(t_i)$ and 
	\[{|v|\left((\tau_{k}-t_{i'})-|\delta\tau|\right)}\leq\frac{1}{|\omega_j^k\cdot\e|}\leq {|v|\left((\tau_{k}-t_{i'})+|\delta\tau|\right)}\]
	where $v = v_j(t_i^-)$. There is a shift only if the left member is negative and the right one is positive, \textit{i.e.} $t_{i'}\in [\tau_k-|\delta\tau|,\tau_k+|\delta\tau|]$, or if $|\omega_j^k\cdot\e|$ stays in $\left[\frac{1}{{|v|\left((\tau_{k}-t_{i'})+|\delta\tau|\right)}},\frac{1}{{|v|\left((\tau_{k}-t_{i'})-|\delta\tau|\right)}}\right]$. Using that the surface of $\{\omega\in \mathbb{S}^2,~\omega\cdot \e\in[a,b]\}$ is smaller that $2\pi ((b-a)\wedge 2)$, 
	\[ \begin{split}
	\int_{[0,T]\times\mathbb{S}^2}& \left(\ind_{t_{i'}\in[\tau_k-|\delta\tau|,\tau_k+|\delta\tau|]}+\ind_{|\omega_j^k\cdot\e|\in\left[\frac{1}{|v|\left((\tau_{k}-t_{i'})+|\delta\tau|\right)},\frac{1}{|v|\left((\tau_{k}-t_{i'})-|\delta\tau|\right)}\right]}\right) (\omega^k_j\cdot\e)_+ dt_{i'}d\omega^k_j\\
	&\leq C|\delta\tau|+C\int\limits_0^T \left(\ind_{t_{i'}\notin[\tau_k-|\delta\tau|,\tau_k+|\delta\tau|]}{\frac{|\delta\tau|}{|v|(t_{i'}-\tau_k-|\delta\tau|)(t_{i'}-\tau_k+|\delta\tau|)}}\wedge 1\right)dt_{i'}\\
	&\leq C|\delta\tau|+{2C{|\delta\tau|}}\int\limits_1^{T/|\delta\tau|}{\frac{1}{|v|(s^2-1)}}\wedge 1ds~\\
	&\leq C|\delta\tau| + \frac{2C{|\delta\tau|}}{{|v|}}\leq C|\delta\tau|\left(1+\frac{2}{|v\cdot\e|}\right).
	\end{split}\]
	making the change of variable $s = \pm\frac{t_{i'}-\tau_k}{|\delta\tau|}$.
	
	In the case where only one particle has a reflection in $(t_{i'},t_i)$, $t_{i'}$ has to stay in $[\tau_1 -|\delta\tau|,\tau_1 +|\delta\tau|]$, and thus in a set of size $2|\delta\tau|$. We sum on all possible "last reflections". There are at most $E^{1/2}T$ reflections and as in the previous section $|\delta\tau|\leq (\epsilon+|x_{a(i)}^0(t_i)-x_{a(i)}^\epsilon(t_i)|)/(|v\cdot \e|)$. Finally the set of parameters $(t_{i'},(\omega_j^k)_{k<k_j(t_i)})$ such that there is a shift is of size at most 
	\[CT\left({\left(\epsilon+|x_{a(i)}^0(t_i)-x_{a(i)}^\epsilon(t_i)|\right)}\left(\frac{1}{|v\cdot\e|}+\frac{1}{|v\cdot\e|^2}\right)\wedge 1\right).\]
	Integrating over $(\nu_i,\bar{v}_i)$, and applying \eqref{intégration de 1/v} and \eqref{intégration de 1/v^2}, the set of parameters $((\nu_i,\bar{v}_i),t_{i'},$ $(\omega_j^k)_{k<k_j(t_i)})$ is of size at most 
	\[\left(\epsilon+|x_{a(i)}^0(t_i)-x_{a(i)}^\epsilon(t_i)|\right)^{1/2}C(E,T).\]
		
	Combining this estimation, the estimation of $|x_{a(i)}^0(t_i)-x_{a(i)}^\epsilon(t_i)|$ in \eqref{estimation de continuité du flot} and the estimation of the remaining term, $|\mathcal{P}^{i,j,i'}_1|$ converges uniformly to $0$.\qed
	
	\subsection{Estimation of $\mathcal{P}_2\cup\mathcal{P}_2'\cup\mathcal{P}_3$}
	In the following we look only at the hard sphere process and we drop the exponents $\epsilon$.
	
	First we estimate $|\mathcal{P}_2|$; the size of $\mathcal{P}_2'$ and $\mathcal{P}_3$ can be estimated similarly. 
	
	We begin by cutting the grazing velocities: we consider the set of initial parameters $\mathcal{P}_4$ such that
	\begin{itemize}
		\item for $i\in\{1,\cdots,s\}$, $|v_i\cdot\e|>\epsilon^{1/4}$
		\item for $i\in\{s+1,\cdots,s+r\}$, if the particle $j$ is deviated at time $t_i$, $|v_j(t^-_i)\cdot \e|>\epsilon^{1/4}$, $\forall k \in \{1,\cdots,i\}\setminus\{j\}$, $|(v_j(t_i^+)-v_k(t_i^+))\cdot\e|> \epsilon^{1/4}$,
		\item for any reflection of a particle $i$ at time $\tau$, the reflected velocity of reflection $v_i(\tau^-):= \omega_i^l |v_i(\tau^+)|$ has to verify $|v_i(\tau_i^-)\cdot \e|> \epsilon^{1/4}$, $\forall k\neq i$, $|v_j(\tau^-)-v_k(\tau^-)|> \epsilon^{1/4}$.
	\end{itemize}
	Because there are at most $(s+r)E^{1/2}T$ reflections in the interval $[0,t]$, the last condition deals only with a finite number of reflection parameters in $\gr{\bar{\omega}}_{s+r}$. Hence the size of the set $\mathbb{G}^\epsilon\setminus\mathcal{P}_4$ goes to zero. We restrict to $\mathcal{P}_4$ from now on. We split $\mathcal{P}_2\cap \mathcal{P}_4$ into the partition $(\mathcal{P}_{i,j})_{1\leq i < j \leq s+r}$ where $i$ and $j$ are the two first particles that collide.
	
	We denote $k$ the last creation such that the particles $i$ or $j$ are deviated (it depends only on collision parameters $(a,\gr{\sigma})$). 
	
	\paragraph{We consider first the case where only one particle (say $i$) is  involved in the creation $k$.}
	To deal with the periodicity of $\Lambda$, we consider the covering $\tilde{\Lambda}:=[0,1]\times\mathbb{R}^2{\overset{p}{\rightarrow}}[0,1]\times\mathbb{T}^2=\Lambda$. Particles $i$, $j$ and $k$ have infinitely many copies $(i_{\k})$, $(j_\k)$, $(k_\k)$, with coordinates $(x_{i_\k}(t),v_{i_k}(t)) := (x_{i}(t)+\k,v_{i}(t))$ for $\k\in\{0\}\times\mathbb{Z}^2$. If some $i_\k$ has a recollision with a particle $j_{\k'}$ in $\tilde{\Lambda}$, then the particle $i$ and $j$ have a recollision in $\Lambda$. Because $v_i$ and $v_j$ are bounded by $E^{1/2}$, the particle $i_{\vec{0}}$ can interact only with the $j_\k$ for $|\k|\leq 2TE^{1/2}$. In the following we denote $x_i:=x_{i_{\vec{0}}}$, $x_k:=x_{k_{\vec{0}}}$ and $x_j:=x_{j_{\k}}$.
	
	First we cut the reflection times $t_k$ such that $|x_i(t_k)-x_j(t_k)|\leq \epsilon^{1/3}$ and $d(x_i(t_k),\partial\tilde{\Lambda})<\epsilon^{1/3}$. We observe that $x_k$ and $x_j$ are polygonal trajectories, with at most $TE^{1/2}$ branches. Thus the two particles can approach each other at most $T^2E$ times at distance less than $\epsilon^{1/3}+\epsilon$. Thanks to the condition on $|v_i(t_k^-)\cdot\e|$ and $|v_k(t_k^-)-v_j(t_k^-)|$, for $t_k$ outside a set of size $CT^2E\epsilon^{1/3-1/4}$, the particle $k$ does not come close to the boundary nor to the particle $j$.
	
	Then we consider the "virtual" particle $\tj$ as a particle which moves along straight lines and coincides with $j$ after its last reflection before the recollision, and we denote $(x_\tj(t),v_\tj):=(x_j(\tau_j^-)+(t-\tau_j)v_j(\tau_j^-),v_j(\tau^-_j))$ its coordinates, for $\tau_j$ the time of the last reflection of $j$ before the recollision. If there is a recollision between $i$ and $j$, there is a recollision between $i$ and $\tj$. Because $d(x_i(t_k),\partial\tilde{\Lambda})$ and $|x_i(t_k)-x_j(t_k)|$ are greater than $\epsilon^{1/3}$,  $|x_\tj(t_k)-x_i(t_k)|>\epsilon^{1/3}$.
	
	If $i$ has no reflection until the recollision, then there exists a time $\tau \in (0,t_k)$ and a direction $\nu_{\mathrm{rec}}\in \mathbb{S}^2$ such that :
	\[(x_\tj(t_k)-x_i(t_k)) + (\tau-t_k)(v_\tj -v_i(t_k^-)) = \epsilon \nu_{\mathrm{rec}}.\]
	Thus $(v_i(t_k^-)-v_\tj)$ is in a cone $C((x_\tj(t_k)-x_i(t_k)),\alpha)$ of axes $(x_\tj(t_k)-x_i(t_k))$ and angle $\alpha:=2\arcsin\left( {\epsilon}/{|x_\tj(t_k)-x_i(t_k)|}\right)$. Because ${|x_\tj(t_k)-x_i(t_k)|}>\epsilon^{1/3}$ and $|v_i(t_k)|<E^{1/2}$, $|v_i(t_k)|$ has to be in a rectangle of size $E^{1/2}\times\left( CE^{1/2}\epsilon^{1-1/3}\right)^2$. Hence the size of the set leading to such recollisions goes to zero.
	
	If $i$ has at least one reflection, consider $\omega_i^l$ its last reflection before the recollision, $\tau_l$ the time of the last reflection and $\tau$ the time of recollision. In order to have a recollision, there exists a direction $\nu_{\mathrm{rec}}$ such that
	\begin{multline*}(x_\tj(\tau_l)-x_i(\tau_l)) + (\tau-\tau_l)(v_\tj -|v_i(t_k^-)|\omega_i^l) = \epsilon \nu_{\mathrm{rec}}\\
	\Rightarrow \omega_i^l = \frac{v_\tj}{|v_i(t_k^-)|}+\frac{1}{|v_i(t_k^-)|(\tau_l-\tau)}\left((x_i(\tau_l)-x_\tj(\tau_l))+\epsilon\nu_{\mathrm{rec}}\right).\end{multline*}
	Thus $\omega^l_i$ is in $\left({v_\tj}/{|v_i(t_k^-)|}+C((x_i(\tau_l)-x_\tj(\tau_l)),2\arcsin {\epsilon}/{|x_i(\tau_l)-x_\tj(\tau_l)|})\right)\cap\mathbb{S}^2$. Because the norm of the velocities lays between $\epsilon^{1/4}$ and $E^{1/2}$, for $d := |x_i(\tau_l)-x_\tj(\tau_l)|$, $\omega^l_i$ stays in the intersection of $\mathbb{S}^2$ and a cylinder of radius
	\[\left(1+\frac{|v_\tj|}{|v_i(t_k^-)|}\right)2\tan \left(\arcsin\frac{\epsilon}{|x_i(\tau_l)-x_\tj(\tau_l)|}\right)=CE^{1/2} \frac{\epsilon^{1-1/4}/d}{\sqrt{1-\epsilon^2/d^2}}.\]
	Finally the set of bad directions of reflection $\omega_i^l$ is of size at most :
	\begin{equation}\label{estimation des directions de recollision}
	CE^{1/4}\frac{\left(\epsilon^{1-1/4}/d\right)^{1/2}}{\left(1-\epsilon^2/d^2\right)^{1/4}}\leq CE^{1/4}\frac{\left(\epsilon^{3/4}/d\right)^{1/2}}{\left(1-\epsilon^2/(T^2E)\right)^{1/4}}\leq CE^{1/2}\left(\epsilon^{3/4}/d\right)^{1/2}
	\end{equation}
	using that a particle can cross at most a distance $TE^{1/2}$ and that $\epsilon$ is small enough. In order to control the size of the bad set, we have to cut the trajectories such that $d<\epsilon^{1/2}$. Let $\{x_\tj^\para\}$ the projection of the straight lines $\{x_\tj(t),~t\in\mathbb{R}\}$ on $\partial\tilde{\Lambda}$.
	The last change of direction of the particle $i$ is at the previous reflection $\omega^{l-1}_i$ or at the creation $k$, at the point $X_i$. Because the distance between $X_i$ and  is greater than $\epsilon^{1/3}$, the particle $i$ reaches the boundary at distance less than $\epsilon^{1/2}$ of $\{x_\tj^\para\}$only  if its velocity $v_i(\tau_l^+)$ forms an angle less than $C\epsilon^{1/2-1/3}$ with the plane passing by $\{x_\tj^\para\}$ and $X_i$. Integrating on the $\omega^{l-1}_i$ or $(\nu_k,\bar{v}_k)$ according to the nature of the previous change of direction, the size of parameters such that $d<\epsilon^{1/2}$ goes to zero.
	
	\paragraph{Now we treat the case where $i$ and $j$ are both involved in the creation $k$ (say $k=i$).} We begin by cutting the time where $x_j(t_i)$ is close to the boundary. Because $|v_j(t_i^+)\cdot\e|>\epsilon^{1/4}$, for $t_i$ outside a set of size $CE^{1/2}T \epsilon^{1/3-1/4}$, $x_j$ is at distance greater than $(\epsilon^{1/3}+\epsilon)$ of the boundary.
	
	In the case where neither $i$ nor $j$ have a reflection, the particles do not see the boundary and we can treat it as in the case where the domain is $\mathbb{R}^3$.
	
	Note that in the precedent paragraph, if $i$ had two reflections, we parametrized the bad set by the two last directions of reflection. Thus the same reasoning works and the size of parameters leading to such recollisions  goes to zero.
	
	We have finally to deal with two cases: if the two particles have a reflection or if only one does.
	
	We treat first the case where only $i$ has a reflection which occurs at time $\tau_l$. Because $x_i(t_i^-)$ is at distance at least $\epsilon^{1/3}$ of $\partial\Lambda$ and $|v_i(t_i^-)\cdot\e|<E^{1/2}$, $(t_i-\tau_l)$ is greater than $\epsilon^{1/3}E^{-1/2}$. The distance between $x_i(\tau_l)$ and $x_j(\tau_l)$ is greater than 
	\[d\geq\left\vert\left(\epsilon\nu_i+(\tau_l-t_i)\left(v_i(t_i^-)-v_j(t_i^-)\right)\right)\cdot\e\right\vert > E^{-1/2} \epsilon^{1/3}\epsilon^{1/4}-\epsilon>\epsilon^{1/2}.\]
	Thus we can use the estimation \eqref{estimation des directions de recollision}. The case where $j$ has a reflection can be treated similarly.
	
	In the case where the two particles have a reflection, we suppose that the last reflection involves the particle $i$. In the covering domain $\tilde{\Lambda}$, we apply the formula \eqref{estimation des directions de recollision} with
	\[d = \left|\vec{d}_0 + (\tau_{\tilde{l}}-\tau_l) |v_j|\omega_j^{\tilde{l}}\right|\]
	where $\tau_l$ (respectively $\tau_{\tilde{l}}$) is the time of the last reflection of the particle $i$ (respectively $j$), $\omega_j^{\tilde{l}}$ is the last direction of reflection, and $\vec{d}_0 := x_j(\tau_{\tilde{l}})-x_i(\tau_{l})$. Denoting $\theta$ the angle between $\vec{d_0}$ and $\omega_j^{\tj}$, we can use the two orthogonal decompositions :
	\[\begin{split}
		\vec{d}_0 + (\tau_{\tilde{l}}-\tau_l) |v_j|\omega_j^{\tilde{l}} &= \left(\vec{d}_0\cdot \omega^{\tilde{l}}_j + (\tau_{\tilde{l}}-\tau_l) |v_j|\right)\omega_j^{\tilde{l}}+\vec{d}_0^\perp \\
		&= \left(|\vec{d}_0| + (\tau_{\tilde{l}}-\tau_l) |v_j|\omega_j^{\tilde{l}}\cdot\frac{\vec{d}_0}{|\vec{d}_0|}\right)\frac{\vec{d}_0}{|\vec{d}_0|} + (\tau_{\tilde{l}}-\tau_l) |v_j|\omega_j^{\tilde{l}\perp}.
	\end{split}\]
	Thus 
	\[	d\geq \sup \left(\left|\vec{d}_0^\perp\right|,\left|(\tau_{\tilde{l}}-\tau_l) |v_j|\omega_j^{\tilde{l}\perp}\right|\right)\geq |\sin\theta| \sup\left(\left|\vec{d}_0\right|,\left|\tau_{\tilde{l}}-\tau_l\right| |v_j|\right),\]
	and the size of bad directions $\omega_i^l$ is at most 
	\[\frac{{CE^{1/4}\left(\epsilon^{1-1/4}\right)^{1/2}}}{|\sin \theta|^{1/2}}\inf\left({|\vec{d}_0|}^{-1/2},{(\left|\tau_{\tilde{l}}-\tau_l\right| |v_j|)^{-1/2}}\right).\]
	The term $|\sin\theta|^{-1/2}$ is integrable with respect to the measure $d\omega^{\tilde{l}}_j$. Hence the set of bad directions $(\omega_i^l,\omega_j^{\tilde{l}})$ is of size at most 
	\[CE^{1/4}\epsilon^{3/8}\inf\left({|\vec{d}_0|}^{-1/2},{\left(\left|\tau_{\tilde{l}}-\tau_l\right| |v_j|\right)^{-1/2}}\right).\]
	If $i$ and $j$ have a reflection on a different component of $\partial\Lambda$, $|\vec{d}_0|\geq 1$. Else, denoting $h$ (respectively $h+\delta h$) the distance of $j$ (respectively $i$) from the side of $\partial\Lambda$ where reflections occur,
	\[\begin{split}
	\left||v_j|(\tau_{\tilde{l}}-\tau_l)\right|&=|v_j| \left|\frac{h}{v_j\cdot\e}-\frac{h+\delta h}{v_i\cdot\e}\right|\\
	&\geq \frac{h|(v_i-v_j)\cdot\e|}{|v_i\cdot\e|}-\frac{|\delta h|}{|v_j\cdot\e|}\\
	&\geq E^{1/2}\epsilon^{1/3+1/4}-\epsilon^{1-1/4}\\
	&\geq \epsilon^{7/12}
	\end{split}\]
	for $E$ large enough.
	Hence the set of the bad parameters is of size $C(E,R,T) \epsilon^{3/8-7/24} = C(E,R,T) \epsilon^{1/12}$.
	
	This allows us to conclude that $|\mathcal{P}_2|$ converges to $0$. One can estimate the set of overlaps $\mathcal{P}_2'$ and $\mathcal{P}_3$ in the same way. This concludes the proof of Theorem \ref{theorem de Landford}.\qed

	\begin{appendix}                                     
		\section{Calerman's parametrization and scattering estimates}\label{coordonné de Calerman}
		In section \ref{Term by term convergence} we need to estimate some singular integrals with respect to the measure $|(v-v_*)\cdot\nu|d\nu dv_*$ where $v_i$ can represent $v_*$, $v'$ or $v_*'$. In this appendix we give the detailed statement and proofs.
		                                                    
		We start by recalling the Calerman's collision parameters
		\begin{equation}
		\left\lbrace\begin{split}
		\mathbb{R}^3\times\mathbb{S}^2&\rightarrow C:=\left\lbrace(v',v'_*)\in\mathbb{R}^3\times\mathbb{R}^3,~(v'-v)\cdot(v_*'-v)=0\right\rbrace\\
		(v_*,\nu)&\mapsto(v',v'_*)
		\end{split}\right.
		\end{equation}
		which map the measure $|(v-v_*)\cdot\nu|dv_*d\nu$ into $dv'dS(v'_*)$ where $(v',v_*')$ are given by the scattering and $dS$ is the Lebesgue measure on the affine plane passing through $v$ and normal to $(v'-v)$.
		
		We can then prove our first estimation lemma:
		\begin{lemma}
			Fix $v\in \mathbb{R}^3$ and $a<b$ two real numbers. Then for $v_i$ equal to $v_*$, $v'$ or $v_*'$, for some contant $C>0$
			\begin{equation}
			\int_{\mathbb{R}^3\times\mathbb{S}^2} \ind_{(v_i\cdot\e)\in[a,b]}\ind_{|v|^2+|v_*|^2\leq E} |(v-v_*)\cdot\nu|dv_*d\nu \leq C E^2 |b-a|.
			\end{equation}
		\end{lemma}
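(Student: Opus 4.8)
The plan is to treat the three choices of $v_i$ separately, reducing each to an elementary volume estimate built on two facts: the Carleman change of variables recalled just above, which turns the singular measure $|(v-v_*)\cdot\nu|\,dv_*\,d\nu$ into $dv'\,dS(v_*')$, and conservation of energy $|v|^2+|v_*|^2=|v'|^2+|v_*'|^2$, so that the cutoff $\ind_{|v|^2+|v_*|^2\le E}$ always becomes a constraint of the form $\ind_{|w_1|^2+|w_2|^2\le E}$ in the relevant coordinates. In particular, on the support of the integrand one always has $|v|,|v_*|\le E^{1/2}$ (if $|v|^2>E$ the integrand vanishes), and likewise $|v'|,|v_*'|\le E^{1/2}$.

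\textbf{Case $v_i=v_*$.} Here I would not use Carleman at all. First integrate out $\nu$ via the identity $\int_{\mathbb{S}^2}|w\cdot\nu|\,d\nu=2\pi|w|$ with $w=v-v_*$, which reduces the integral to $2\pi\int_{\mathbb{R}^3}\ind_{(v_*\cdot\e)\in[a,b]}\ind_{|v|^2+|v_*|^2\le E}|v-v_*|\,dv_*$. On the support $|v-v_*|\le 2E^{1/2}$, while the remaining $v_*$-domain is the intersection of the ball $\{|v_*|\le E^{1/2}\}$ with the slab $\{v_*\cdot\e\in[a,b]\}$, whose volume is at most $\pi E\,|b-a|$ (each cross-sectional disk has area at most $\pi E$ and the slab has width $|b-a|$). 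This gives the bound $CE^{3/2}|b-a|$, a fortiori $\le CE^2|b-a|$.

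\textbf{Case $v_i=v'$.} Applying Carleman, the integral becomes
\[
\int_{\mathbb{R}^3}dv'\,\ind_{(v'\cdot\e)\in[a,b]}\left(\int_{\Pi(v')}dS(v_*')\,\ind_{|v'|^2+|v_*'|^2\le E}\right),
\]
where $\Pi(v')$ is the plane through $v$ normal to $v'-v$. For fixed $v'$ with $|v'|\le E^{1/2}$, the inner integral runs over $\{v_*'\in\Pi(v')\,:\,|v_*'|\le (E-|v'|^2)^{1/2}\}$, a planar disk of radius at most $E^{1/2}$ and hence of area at most $\pi E$. The outer $v'$-integration is again over a slab-ball intersection of volume at most $\pi E\,|b-a|$, so the product is bounded by $CE^2|b-a|$.

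\textbf{Case $v_i=v_*'$.} This is the only delicate point, and the main thing to get right. If one keeps the form $dv'\,dS(v_*')$ and integrates first over $\Pi(v')$ against the constraint $v_*'\cdot\e\in[a,b]$, the relevant planar strip has width $|b-a|/|\mathrm{proj}_{\Pi(v')}\e|$, which \emph{degenerates} when $\e$ becomes nearly normal to $\Pi(v')$; the naive estimate is therefore not uniform. The clean fix is to use the symmetric form of the Carleman representation, $dv_*'\,d\tilde S(v')$ with $\tilde S$ the Lebesgue measure on the plane through $v$ normal to $v_*'-v$, the two expressions defining the same measure on $C$. With this form the computation is literally that of the previous case with the roles of $v'$ and $v_*'$ exchanged, giving $CE^2|b-a|$. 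Combining the three cases yields the claim, with the genuine obstacle being precisely this grazing degeneracy, which is resolved by the symmetry of the Carleman measure rather than by any extra work.
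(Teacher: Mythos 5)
Your first two cases ($v_i=v_*$, treated without Carleman, and $v_i=v'$) are correct and essentially what the paper does, so everything hinges on the case $v_i=v_*'$ --- and there your proof has a genuine gap, at exactly the step you flag as the ``clean fix''. The identity you invoke, namely that $dv'\,dS(v_*')$ and $dv_*'\,d\tilde S(v')$ define the same measure on $C$, is false. Writing each surface measure as a delta function of the unit-normalized defining form gives
\begin{equation*}
dv'\,dS(v_*')=|v'-v|\,\delta\bigl((v'-v)\cdot(v_*'-v)\bigr)\,dv'\,dv_*',\qquad dv_*'\,d\tilde S(v')=|v_*'-v|\,\delta\bigl((v'-v)\cdot(v_*'-v)\bigr)\,dv'\,dv_*',
\end{equation*}
so the two measures differ by the factor $|v'-v|/|v_*'-v|$, which is unbounded on $C$ even after the energy cutoff (let $v_*'\to v$ inside the plane $\Pi(v')$ while $|v'-v|$ stays of order $E^{1/2}$). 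In particular, on the region $\{|v'-v|>|v_*'-v|\}$ the swap is not even an upper bound, so the estimate you obtain in the symmetric parametrization does not control the original integral. Your instinct that the Carleman measure ``should'' be symmetric is correct for the true pushforward of $|(v-v_*)\cdot\nu|\,dv_*\,d\nu$, which equals $2\,\delta\bigl((v'-v)\cdot(v_*'-v)\bigr)\,dv'\,dv_*'=\tfrac{2}{|v'-v|}\,dv'\,dS(v_*')$; but that weighted formula is inconsistent with the unweighted one you (following the paper's appendix) use in the case $v_i=v'$ --- you cannot assume both at once, and with the weight your $v'$-case would also have to absorb a factor $1/|v'-v|$.

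The paper instead attacks the degeneracy you correctly identified head-on rather than trying to avoid it. For fixed $v'$, the constraint $v_*'\cdot\e\in[a,b]$ restricts $v_*'$, inside the plane $\Pi(v')$, to a strip of width $|b-a|/|\sin(\e,v'-v)|$, hence (with the energy cutoff) to a rectangle of area at most $2E^{1/2}|b-a|/|\sin(\e,v'-v)|$. The singular factor is then integrated out in $v'$: in spherical coordinates centered at $v$ with polar axis $\e$ one has $dv'=r^2\sin\theta\,dr\,d\theta\,d\phi$ and $\sin(\e,v'-v)=\sin\theta$, so the Jacobian cancels the singularity exactly and the $v'$-integral contributes $CE^{3/2}$, giving $CE^{2}|b-a|$ in total. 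If you want to keep your symmetric route, you must first prove the weighted (symmetric) form of the Carleman measure and then redo both cases carrying the weight $1/|v'-v|$; this is feasible (a dyadic decomposition of the slab around $v$ gives $\int_{\mathrm{slab}\cap B(v,2E^{1/2})}|v'-v|^{-1}dv'\leq CE^{1/2}|b-a|$, so the lemma survives), but as written the key step is unjustified and, with the normalization you adopted, false.
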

		\begin{proof}
			In the case where $v_i = v_*$, the proof is straightforward. If $v_i=v'$ we apply Calerman's change of variables and the proof is also direct.
			
			In the case where $v_i=v_*'$, we apply Calerman's change of variables. If $v_*'\cdot\e$ is between $a$ and $b$, then $v'_*$ stays in a rectangle of size $E^{1/2}\times|b-a|/|\sin(\e,v'-v)|$. Then integrating with respect to $v'$, we obtain the expected bound.
		\end{proof}
		From this lemma we can deduce the following estimates:
		\begin{prop}
			Fix $v\in \mathbb{R}^3$. Then for $v_i$ equal to $v_*$, $v'$ or $v_*'$, for some contant $C>0$
			\begin{equation}\label{intégration de 1/v}
			\int_{\mathbb{R}^3\times\mathbb{S}^2} \left(\frac{\epsilon}{|v_i\cdot\e|}\wedge 1\right)\ind_{|v|^2+|v_*|^2\leq E} |(v-v_*)\cdot\nu|dv_*d\nu \leq C E^2 \epsilon|\log \epsilon|
			\end{equation}
			\begin{equation}\label{intégration de 1/v^2}
			\int_{\mathbb{R}^3\times\mathbb{S}^2} \left(\frac{\epsilon}{|v_i\cdot\e|^2}\wedge 1\right)\ind_{|v|^2+|v_*|^2\leq E} |(v-v_*)\cdot\nu|dv_*d\nu \leq C E^2 \epsilon^{1/2}
			\end{equation}
		\end{prop}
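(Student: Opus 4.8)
The plan is to deduce both bounds from the preceding Lemma by slicing the domain of integration according to the dyadic size of $|v_i\cdot\e|$. The Lemma tells us that for every slab $\{v_i\cdot\e\in[a,b]\}$ the weighted measure $\ind_{|v|^2+|v_*|^2\leq E}|(v-v_*)\cdot\nu|\,dv_*\,d\nu$ has mass at most $CE^2|b-a|$; in effect, the pushforward of this measure under $(v_*,\nu)\mapsto v_i\cdot\e$ behaves like a measure of density $\leq CE^2$ on the line. The one preliminary remark is that the integrand is supported where $|v_i\cdot\e|\leq E^{1/2}$: indeed the cutoff forces $|v_*|\leq E^{1/2}$, and by conservation of energy $|v'|^2+|v_*'|^2=|v|^2+|v_*|^2\leq E$, so in every case $v_i\in\{v_*,v',v_*'\}$ we have $|v_i\cdot\e|\leq|v_i|\leq E^{1/2}$. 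Hence the number of relevant dyadic scales is $\sim\log(E^{1/2}/\epsilon)$, which is the source of the logarithm in \eqref{intégration de 1/v}.

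For \eqref{intégration de 1/v} I would split at the threshold $|v_i\cdot\e|=\epsilon$, where $\frac{\epsilon}{|v_i\cdot\e|}\wedge 1$ switches from $1$ to $\frac{\epsilon}{|v_i\cdot\e|}$. On the region $\{|v_i\cdot\e|\leq\epsilon\}$ the factor equals $1$ and the Lemma (with $[a,b]=[-\epsilon,\epsilon]$) bounds the contribution by $2CE^2\epsilon$. For $j\geq 0$, on the dyadic shell $R_j:=\{2^j\epsilon<|v_i\cdot\e|\leq 2^{j+1}\epsilon\}$ the factor is at most $2^{-j}$, while the Lemma bounds the mass of $R_j$ (two slabs each of width $2^j\epsilon$) by $2CE^2\,2^j\epsilon$; the product is $\leq 2CE^2\epsilon$, uniformly in $j$. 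Since only the scales $0\leq j\lesssim\log_2(E^{1/2}/\epsilon)$ are nonempty, summing gives a contribution $\leq CE^2\epsilon\log_2(E^{1/2}/\epsilon)\leq CE^2\epsilon|\log\epsilon|$ for $\epsilon$ small, which is the claimed bound.

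For \eqref{intégration de 1/v^2} I would run the identical argument with the threshold now at $|v_i\cdot\e|=\epsilon^{1/2}$, since $\frac{\epsilon}{|v_i\cdot\e|^2}=1$ there. The region $\{|v_i\cdot\e|\leq\epsilon^{1/2}\}$ contributes $\leq 2CE^2\epsilon^{1/2}$. On the shell $R_j:=\{2^j\epsilon^{1/2}<|v_i\cdot\e|\leq 2^{j+1}\epsilon^{1/2}\}$ the factor is now at most $\frac{\epsilon}{(2^j\epsilon^{1/2})^2}=2^{-2j}$, while the mass of $R_j$ is $\leq 2CE^2\,2^j\epsilon^{1/2}$, so its contribution is $\leq 2CE^2\epsilon^{1/2}\,2^{-j}$. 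The decisive difference with the previous case is that the per-scale contributions now decay geometrically, so
\[
\sum_{j\geq 0} 2CE^2\epsilon^{1/2}\,2^{-j}\leq CE^2\epsilon^{1/2},
\]
and no logarithm appears. The only subtle point in the whole argument is precisely this bookkeeping of the scales — tracking that the harmonic-type sum in the first estimate produces the factor $|\log\epsilon|$ whereas the geometrically convergent sum in the second yields the clean power $\epsilon^{1/2}$; everything else is a direct application of the Lemma shell by shell.
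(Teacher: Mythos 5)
Your proof is correct and follows essentially the same route as the paper: both arguments slice the domain according to the size of $|v_i\cdot\e|$, apply the preceding lemma slab by slab, and sum, obtaining a harmonic-type (logarithmic) total in the first case and a convergent sum in the second. The only difference is cosmetic bookkeeping — you use dyadic shells $\{2^j\epsilon<|v_i\cdot\e|\leq 2^{j+1}\epsilon\}$ with weight $2^{-j}$, while the paper uses arithmetic slabs $\{|v_i\cdot\e|\in[n\epsilon,(n+1)\epsilon]\}$ with weight $1/n$.
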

		\begin{proof}
			We use the previous lemma to decompose the set:
			\[\begin{split}
			\int_{\mathbb{R}^3\times\mathbb{S}^2}& \left(\frac{\epsilon}{|v_i\cdot\e|}\wedge 1\right)\ind_{|v|^2+|v_*|^2\leq E} |(v-v_*)\cdot\nu|dv_*d\nu \\
			&\leq \int_{\mathbb{R}^3\times\mathbb{S}^2} \ind_{|v_i\cdot\e|\in[0,\epsilon]}\ind_{|v|^2+|v_*|^2\leq E} |(v-v_*)\cdot\nu|dv_*d\nu\\
			&~~+\sum_{1\leq n\leq E^{1/2}/\epsilon}\frac{1}{n}\int_{\mathbb{R}^3\times\mathbb{S}^2} \ind_{|v_i\cdot\e|\in[n\epsilon,(n+1)\epsilon]}\ind_{|v|^2+|v_*|^2\leq E} |(v-v_*)\cdot\nu|dv_*d\nu\\
			&\leq CE^2\epsilon\left( 1 + \sum_{1\leq n\leq E^{1/2}/\epsilon}\frac{1}{n}\right)\\
			&\leq CE^2\epsilon |\log E^{1/2}/\epsilon|\leq CE^2\epsilon |\log \epsilon|
			\end{split}\]
			
			The proof of the second line is similar.
			\[\begin{split}
			\int_{\mathbb{R}^3\times\mathbb{S}^2}& \left(\frac{\epsilon}{|v_i\cdot\e|^2}\wedge 1\right)\ind_{|v|^2+|v_*|^2\leq E} |(v-v_*)\cdot\nu|dv_*d\nu \\
			&\leq \int_{\mathbb{R}^3\times\mathbb{S}^2} \ind_{|v_i\cdot\e|\in[0,\epsilon^{1/2}]}\ind_{|v|^2+|v_*|^2\leq E} |(v-v_*)\cdot\nu|dv_*d\nu\\
			&~~+\sum_{1\leq n} \frac{1}{n^2}\int_{\mathbb{R}^3\times\mathbb{S}^2} \ind_{|v_i\cdot\e|\in[n\epsilon^1/2,(n+1)\epsilon^{1/2}]}\ind_{|v|^2+|v_*|^2\leq E} |(v-v_*)\cdot\nu|dv_*d\nu\\
			&\leq CE^2\epsilon^{1/2}\left( 1 + \sum_{1\leq n}\frac{1}{n^2}\right)\\
			&\leq CE^2\epsilon^{1/2}
			\end{split}\]
		\end{proof}
	
	\end{appendix}                                       
	\bibliographystyle{abbrv}
	\bibliography{reference_rapport}
\end{document}